\def\R{{\mathbb R}}
\def\C{\mathbb{C}}
\def\Z{\mathbb{Z}}
\def\ii{\mathrm{i}}
\newtheorem{prop}{\bf Proposition}[section]
\newtheorem{thm}[prop]{\bf Theorem}
\newtheorem{cor}[prop]{\bf Corollary}
\newtheorem{lem}[prop]{\bf Lemma}
\newtheorem{rmk}[prop]{\it Remark}
\begin{document}

\title{The $p\,$-approximation property for simple Lie groups with finite center}

\author{Ignacio Vergara}
\address{UMPA UMR 5669 CNRS, ENS Lyon, Universit\'e de Lyon, 69364 Lyon Cedex 07, France}
%
%
%
%
%
\keywords{Locally compact groups; Fig\`a-Talamanca--Herz algebra; $p\,$-Approximation property}

\begin{abstract}
We prove that, for any $1<p<\infty$, the groups $\text{SL}(3,\mathbb{R})$ and $\text{Sp}(2,\mathbb{R})$ do not have the $p\,$-approximation property of An, Lee and Ruan, which implies in particular that they are not $p\,$-weakly amenable. It follows that the same holds for any connected simple Lie group with finite center and real rank greater than 1, as well as for any lattice in it. This extends Haagerup and de Laat's result for the AP, which in this language corresponds to the case $p=2$.
\end{abstract}

\maketitle

\section{Introduction}

In \cite{An} the authors define what we could call $L_p$ versions of weak amenability and the approximation property (AP) for locally compact groups. In order to do this, they work with the Fig\`a-Talamanca--Herz algebra of a group instead of its Fourier algebra, and consider its $p\,$-completely bounded multipliers. These objects arise from the theory of $p\,$-operator spaces, which finds its origins in Pisier's work \cite{Pis2}. We refer the reader to \cite{An} and \cite{Daw} for a detailed treatment of this subject.

For a complex Banach space $E$, we denote $E^*$ its dual and $\mathcal{B}(E)$ the algebra of bounded operators on $E$. Let $G$ be a locally compact group endowed with a left Haar measure. For $1<p<\infty$ let $\lambda_p:G\to\mathcal{B}(L_p(G))$ be the left regular representation
\begin{equation*}
\lambda_p(s)f(t)=f(s^{-1}t).
\end{equation*}
Consider the Banach space projective tensor product $L_{p'}(G)\hat{\otimes}L_p(G)$, where $p^{-1}+p'^{-1}=1$, and $C_0(G)$ the space of continuous functions on $G$ that vanish at infinity. We may define a map $\Lambda_p:L_{p'}(G)\hat{\otimes}L_p(G)\to C_0(G)$ by
\begin{equation}\label{Lamda_p}
\Lambda_p(g\otimes f)(s)=\langle g,\lambda_p(s)f\rangle=g\ast\check{f}(s),
\end{equation}
where $\check{f}(t)=f(t^{-1})$. The Fig\`a-Talamanca--Herz algebra $A_p(G)$ is defined as the image of $\Lambda_p$ endowed with the norm given by $L_{p'}(G)\hat{\otimes}L_p(G)/Ker(\Lambda_p)$, namely
\begin{equation*}
\| a\|_{A_p(G)}=\inf\left\{\sum_n\|g_n\|_{p'}\|f_n\|_p\ \bigg|\ \begin{array}{c}
a=\sum_ng_n\ast\check{f}_n,\\ \ g_n\in L_{p'}(G),\ f_n\in L_p(G)
\end{array} \right\}.
\end{equation*}
It is a commutative Banach algebra under pointwise operations. Recall that the space $\mathcal{B}(L_p(G))$ may be identified with the dual of $L_{p'}(G)\hat{\otimes}L_p(G)$ by
\begin{equation*}
\langle T,g\otimes f\rangle=\langle g, T(f)\rangle,\quad\forall T\in\mathcal{B}(L_p(G)),\ \forall g\in L_{p'}(G),\ \forall f\in L_p(G).
\end{equation*}
The algebra of $p\,$-pseudo-measures $PM_p(G)$ is defined as the weak*-closed linear span of $\{\lambda_p(s) : s\in G \}$ in $\mathcal{B}(L_p(G))$. This algebra can be identified with $A_p(G)^*$ by
\begin{equation*}
\langle T,\Lambda_p(g\otimes f)\rangle = \langle g, T(f)\rangle,\quad \forall T\in PM_p(G),\ \forall g\in L_{p'}(G),\ \forall f\in L_p(G).
\end{equation*}
When $p=2$, $A_p(G)$ coincides with the Fourier algebra $A(G)$, and $PM_p(G)$ is the group von Neumann algebra $\mathcal{L}(G)$.

We say that a function $\varphi:G\to\C$ is a multiplier of $A_p(G)$ if, for every $a\in A_p(G)$, the function $\varphi a$ defined by pointwise multiplication is again an element of $A_p(G)$. The notion of $p\,$-completely bounded multiplier is defined in the context of $p\,$-operator spaces; however, we will only use the following characterization due to Daws \cite[Theorem 8.3]{Daw}, which can be taken as a definition. Consider $SQ_p$ the class of Banach spaces which are quotients of subspaces (or equivalently subspaces of quotients) of $L_p$ spaces. We say that $\varphi$ is a $p\,$-completely bounded multiplier of $A_p(G)$ if there exist $E\in SQ_p$ and continuous bounded maps $\alpha: G\to E$, $\beta: G\to E^*$ such that
\begin{equation}\label{phiab}
\varphi(st^{-1})=\langle\beta(s),\alpha(t)\rangle,\qquad \forall s,t\in G.
\end{equation}
We denote the space of all such functions $M_{p\text{-}cb}(G)$, with the norm given by the infimum of $\|\alpha\|_{\infty}\|\beta\|_{\infty}$ over all $E$, $\alpha$, $\beta$ such that (\ref{phiab}) holds. Since closed subspaces and quotients of Hilbert spaces are again Hilbert spaces, $M_{2\text{-}cb}(G)$ is the space of completely bounded multipliers of the Fourier algebra, often noted as $M_0A(G)$.

\begin{rmk}\label{SQpLp}
	Suppose that $\varphi\in M_{p\text{-}cb}(G)$ and take $E$, $\alpha$ and $\beta$ such that $\varphi$ decomposes as in (\ref{phiab}). Then there exist a measure space $\Omega$ and a closed subspace $F\subset L_p(\Omega)$ such that $E$ is a subspace of $L_p(\Omega)/F$. Then, for every $t\in G$, $\alpha(t)\in L_p(\Omega)/F$, and this space may be identified with the dual of $F^{\perp}\subset L_q(\Omega)$. By the Hahn-Banach theorem, $\alpha(t)$ extends to an element $\tilde{\alpha}(t)\in L_q(\Omega)^*=L_p(\Omega)$ of same norm. Likewise, for all $s\in G$, $\beta(s)\in E^*$ extends to $\tilde{\beta}(s)\in (L_p(\Omega)/F)^*$, and this space may be identified with $F^{\perp}\subset L_q(\Omega)$. Hence, we get functions $\tilde{\alpha}:G\to L_p(\Omega)$ and $\tilde{\beta}:G\to L_q(\Omega)$ such that $\varphi(st^{-1})=\langle\tilde{\beta}(s),\tilde{\alpha}(t)\rangle$ for all $s,t\in G$. However, we do not know whether these functions are continuous.
\end{rmk}

Denote by $C_b(G)$ the space of bounded continuous functions on $G$. The following inclusions are contractive
\begin{equation*}
A_p(G)\hookrightarrow M_{p\text{-}cb}(G) \hookrightarrow C_b(G).
\end{equation*}
An approximate identity (a.i.) for $A_p(G)$ is a net $(\varphi_i)$ in $A_p(G)$ such that
\begin{equation*}
\|\varphi_ia-a\|_{A_p(G)}\to 0,\quad\forall a\in A_p(G).
\end{equation*}
One of the many characterizations of amenability is the existence of a bounded approximate identity for $A_p(G)$ (for some $1<p<\infty$). We say that $G$ is $p\,$-weakly amenable if it has an approximate identity $(\varphi_i)$ such that $\sup_i\|\varphi_i\|_{M_{p\text{-}cb}(G)}<\infty$. If $p=2$, this is the definition of weak amenability. We may also define an analogue of the Cowling-Haagerup constant.
\begin{equation*}
\Lambda_p(G)=\inf\left\{r\geq 1\ \Big|\ \begin{array}{c}
\text{there is an a.i. } (\varphi_i)\text{ for } A_p(G) \\ \text{ such that } \sup_i\|\varphi_i\|_{M_{p\text{-}cb}(G)}\leq r
\end{array} \right\}.
\end{equation*}
In order to define the $p\,$-approximation property we need to view $M_{p\text{-}cb}(G)$ as a dual space. This can be done as follows. Consider the injective contraction $L_1(G)\to M_{p\text{-}cb}(G)^*$ given by
\begin{equation}\label{alphacb}
\langle f,\varphi\rangle=\int_G f(t)\varphi(t)\,dt,\quad \forall\varphi\in M_{p\text{-}cb}(G),\ \forall f\in L_1(G).
\end{equation}
Let $Q_{p\text{-}cb}(G)$ be the norm closure of $L_1(G)$ in $M_{p\text{-}cb}(G)^*$. Miao proved (see \cite[\S 4]{An}) that
\begin{equation*}
M_{p\text{-}cb}(G)=Q_{p\text{-}cb}(G)^*.
\end{equation*}
We say that $G$ has the $p\,$-approximation property ($p\,$-AP) if there exists a net $(\varphi_i)$ in $A_p(G)\subseteq M_{p\text{-}cb}(G)$ such that $\varphi_i\to 1$ in the weak* topology $\sigma(M_{p\text{-}cb}(G),Q_{p\text{-}cb}(G))$.
\begin{rmk}\label{Apc}
	Define $A_{p,c}(G)=A_p(G)\cap C_c(G)$, where $C_c(G)$ is the space of continuous compactly supported functions on $G$. Then $A_{p,c}(G)$ is dense in $A_p(G)$, and since the inclusion $A_p(G)\hookrightarrow M_{p\text{-}cb}(G)$ is contractive, the net $(\varphi_i)$ can be taken in $A_{p,c}(G)$.
\end{rmk}
For $1<p<\infty$ fixed, amenability implies $p\,$-weak amenability, which in turn implies $p\,$-AP. For a discrete group $\Gamma$, these last two properties have equivalent formulations in terms of approximation properties of the $p\,$-operator spaces $PM_p(\Gamma)$ and $PF_p(\Gamma)$ (see \cite[\S 5]{An} for details).
If $p=2$, the $p\,$-AP is the approximation property (AP) of Haagerup and Kraus \cite{HaaKra}.

One of the motivations for asking whether a group possesses the $p\,$-AP is that this property implies that the convoluters are pseudo-measures. The algebra of $p\,$-convoluters $CV_p(G)$  is defined as the commutant of
\begin{align*}
\{\rho_p(s)\ :\ s\in G\},
\end{align*}
where $\rho_p:G\to\mathcal{B}(L_p(G))$ stands for the right regular representation
\begin{equation*}
\rho_p(s)f(t)=f(ts)\Delta(s)^{\frac{1}{p}},
\end{equation*}
and $\Delta:G\to(0,\infty)$ is the modular function. Since $CV_p(G)$ is weak*-closed and the right and left regular representations commute with each other, it is always true that $PM_p(G)\subseteq CV_p(G)$. Cowling \cite{Cow} showed that when $G$ is weakly amenable, $PM_p(G)=CV_p(G)$ for every $1<p<\infty$, suggesting that similar arguments would show that the same holds when $G$ has the AP. This was explained in detail by Daws and Spronk \cite{DawSp}. With minor changes in their proof, we show that for $1<p<\infty$ fixed, if $G$ has the $p\,$-AP, then $PM_p(G)=CV_p(G)$. In terms of Proposition \ref{q-APp-AP}, this is (a priori) a stronger statement; however, we do not know if there exist groups satisfying the $p\,$-AP for some $p$, but failing to have the AP.

The major part of the paper is devoted to the proof of the following theorem.

\begin{thm}\label{SL3pAP}
	The group $\text{SL}(3,\R)$ does not have the $p\,$-AP for any $1<p<\infty$.
\end{thm}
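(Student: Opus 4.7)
The plan is a proof by contradiction that extends Haagerup--de Laat's strategy for the AP ($p=2$) to the $SQ_p$ setting afforded by Daws' characterization (\ref{phiab}) together with Remark \ref{SQpLp}. Suppose $G=\text{SL}(3,\R)$ had the $p\,$-AP; by Remark \ref{Apc} pick a net $(\varphi_i)\subset A_{p,c}(G)$ with $\varphi_i\to 1$ in $\sigma(M_{p\text{-}cb}(G),Q_{p\text{-}cb}(G))$. First I would replace $(\varphi_i)$ by its bi-average
\[\tilde\varphi_i(s)=\iint_{K\times K}\varphi_i(ksk')\,dk\,dk',\qquad K=\text{SO}(3),\]
which is still in $A_{p,c}(G)$, has uniformly bounded $M_{p\text{-}cb}$-norm (translation acts isometrically on $M_{p\text{-}cb}(G)$ by translating $\alpha,\beta$ in (\ref{phiab})), is now $K$-biinvariant, and still converges weak* to $1$ when tested against the $K$-biinvariant part of $Q_{p\text{-}cb}(G)$. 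Via the $KAK$-decomposition each $\tilde\varphi_i$ identifies with a compactly supported function on the closed positive Weyl chamber $\overline{A^+}=\{\text{diag}(e^{t_1},e^{t_2},e^{t_3}):t_1\geq t_2\geq t_3,\ \sum_j t_j=0\}$.

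The core of the argument is to construct a functional $L\in Q_{p\text{-}cb}(G)$ playing the role of ``evaluation at infinity''. Concretely, I would exhibit a sequence $(f_n)\subset L_1(G)$ of $K$-biinvariant probability densities whose masses concentrate near $\exp(nH_0)$ for a fixed regular direction $H_0\in A^+$, and which are Cauchy in the $Q_{p\text{-}cb}(G)$-norm. Since both $f_n$ and $f_m$ are $K$-biinvariant, this Cauchy property reduces to a rigidity statement: for every $K$-biinvariant $\varphi\in M_{p\text{-}cb}(G)$ with $\|\varphi\|_{M_{p\text{-}cb}}\leq 1$ one has $|\langle f_n-f_m,\varphi\rangle|\to 0$ uniformly in $\varphi$. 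To prove it, take a factorization $\varphi(st^{-1})=\langle\beta(s),\alpha(t)\rangle$ from (\ref{phiab}), upgrade via Remark \ref{SQpLp} to maps $\tilde\alpha:G\to L_p(\Omega)$ and $\tilde\beta:G\to L_q(\Omega)$, and analyze the restriction to the maximal parabolic subgroup $P=\text{SL}(2,\R)\ltimes\R^2\subset G$. Averaging $\tilde\alpha,\tilde\beta$ along F\o{}lner sequences on the $\R^2$-factor of $P$, combined with uniform convexity of $L_p$ and $L_q$ for $1<p<\infty$, should force $\langle f_n-f_m,\varphi\rangle$ to vanish uniformly on the unit ball of the $K$-biinvariant part of $M_{p\text{-}cb}(G)$.

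Granting the existence of $L=\lim_n f_n\in Q_{p\text{-}cb}(G)$, normalized so that $L(1)=\lim_n\int_G f_n\,dt=1$, the contradiction is immediate. On one hand, $L(\tilde\varphi_i)\to L(1)=1$ by the assumed weak* convergence of $(\tilde\varphi_i)$ to $1$. On the other hand, each $\tilde\varphi_i$ is compactly supported, so for every fixed $i$ the supports of $f_n$ and $\tilde\varphi_i$ become disjoint once $n$ is large enough, whence $\langle f_n,\tilde\varphi_i\rangle=0$ eventually, and consequently $L(\tilde\varphi_i)=\lim_n\langle f_n,\tilde\varphi_i\rangle=0$ for every $i$. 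This contradicts $L(\tilde\varphi_i)\to 1$ and completes the proof.

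The main obstacle is the rigidity statement. For $p=2$ one has the spherical function theory of $\text{SL}(3,\R)$, Stinespring-type dilations of $cb$-multipliers, and the Hilbert space spectral machinery; in the $SQ_p$ framework none of these is available, and one is confined to the raw factorization (\ref{phiab}) and the extensions of Remark \ref{SQpLp}, which are not known to be continuous. Every averaging step must therefore be justified through smoothing against compactly supported test functions, and uniform convexity of $L_p,L_q$ must substitute for the spectral arguments used only when $p=2$. Making the rigidity quantitative---uniform over the unit ball of $M_{p\text{-}cb}(G)$ and across all admissible $SQ_p$ factorizations---is where the bulk of the technical work of the theorem resides.
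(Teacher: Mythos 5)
Your overall architecture is the right one and in fact coincides with the paper's: one constructs a family of $K$-biinvariant probability densities escaping to infinity that is Cauchy in $M_{p\text{-}cb}(G)^*$, so that the limit $\mu$ lies in $Q_{p\text{-}cb}(G)$, satisfies $\langle\mu,1\rangle=1$, and annihilates $M_{p\text{-}cb}(G)\cap C_c(G)$, contradicting the $p\,$-AP. (Two small remarks: you do not need to bi-average the net $(\varphi_i)$, since the test functionals are already $K$-biinvariant; and your claim that the averaged net has ``uniformly bounded $M_{p\text{-}cb}$-norm'' is unfounded --- the $p\,$-AP imposes no bound on $\|\varphi_i\|_{M_{p\text{-}cb}(G)}$ --- though this is harmless because only the weak* convergence is used.)

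The genuine gap is exactly where you locate it: the rigidity/Cauchy estimate, which carries essentially all of the content of the theorem, is asserted but not proved. Your sketch --- restrict the factorization to the parabolic $P=\mathrm{SL}(2,\R)\ltimes\R^2$, average $\tilde\alpha,\tilde\beta$ along F\o{}lner sets in the $\R^2$-factor, and invoke uniform convexity of $L_p$ and $L_q$ --- is not the route the paper takes, and as stated it faces concrete obstructions: the extensions $\tilde\alpha,\tilde\beta$ of Remark \ref{SQpLp} are not known to be continuous, so averaging them over noncompact subsets is not even obviously defined, and uniform convexity by itself produces no quantitative decay in the Weyl-chamber parameter, which is what the Cauchy property requires. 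The paper's proof instead stays inside the maximal compact subgroup: it introduces the averaging operators $T_\delta\in PM_p(K)$ over $U\cong\mathrm{SO}(2)$, transports Lafforgue's estimate $\|\Theta_\delta-\Theta_0\|_{\mathcal{B}(L_2)}\le 4|\delta|^{1/2}$ to $\|T_\delta-T_0\|_{\mathcal{B}(L_p)}\le 2^{1+2/p}|\delta|^{1/p}$ by interpolation against the trivial bound, proves $M_{p\text{-}cb}(K)=A_p(K)$ for compact $K$ (Proposition \ref{A=McbA}) so that the averaged multiplier $\phi(s)=\tilde\varphi(D_a s D_a)$ can be paired with $T_\delta-T_0\in PM_p(K)=A_p(K)^*$ (Lemma \ref{phiDkD}), and then chains the resulting local estimates along explicit zig-zag paths in the Weyl chamber (Corollaries \ref{mg-mg'} and \ref{mg-mg'2}, Lemma \ref{mgcauchy}) to obtain the uniform bound $\|m_g-m_{g'}\|_{M_{p\text{-}cb}(G)^*}\le Ce^{\gamma_3(g)/p}$. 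Without an argument of this kind, supplying both the H\"older-type operator estimate on $L_p(K)$ and the path construction that propagates it over the whole chamber, your proposal does not yet constitute a proof.
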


Next we prove the following result, omitting some details which can be easily filled in with the proof of Theorem \ref{SL3pAP}.

\begin{thm}\label{Sp2pAP}
	The group $\text{Sp}(2,\R)$ does not have the $p\,$-AP for any $1<p<\infty$.
\end{thm}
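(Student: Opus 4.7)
The strategy is to adapt the proof of Theorem \ref{SL3pAP} to $G = \text{Sp}(2,\R)$. Let $K \cong U(2)$ be a maximal compact subgroup of $G$ and let $A \cong \R^2$ be a maximal $\R$-split torus, so $G = KAK$. Suppose, for contradiction, that $G$ has the $p\,$-AP. By Remark \ref{Apc} we may choose a net $(\varphi_i)$ in $A_{p,c}(G)$, bounded in $M_{p\text{-}cb}(G)$, and converging to $1$ in the weak$^*$ topology $\sigma(M_{p\text{-}cb}(G), Q_{p\text{-}cb}(G))$. Since $K$ is compact and both the $M_{p\text{-}cb}$-norm and the pre-dual $Q_{p\text{-}cb}(G) \supset L_1(G)$ are invariant under left and right translations, double-averaging the $\varphi_i$ over $K \times K$ produces a net of $K$-bi-invariant elements of $A_{p,c}(G)$ with the same bounds, still converging to $1$ in the weak$^*$ topology.

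The $KAK$-decomposition identifies $K$-bi-invariant functions on $G$ with functions on $A$ invariant under the Weyl group of type $C_2$ (the dihedral group of order $8$, rather than the symmetric group $S_3$ of type $A_2$ encountered for $\text{SL}(3,\R)$). To derive a contradiction, one looks for a closed subgroup $H \subset G$ on which the restricted net is constrained by the $SQ_p$-representation from Remark \ref{SQpLp} in a way incompatible with convergence to $1$. The natural choice, analogous to the role played by $\text{SL}(2,\R) \ltimes \R^2$ inside $\text{SL}(3,\R)$, is the semidirect product $\text{SL}(2,\R) \ltimes H_3$, arising as the Levi times the unipotent radical of a maximal parabolic subgroup of $\text{Sp}(2,\R)$, where $H_3$ denotes the three-dimensional Heisenberg group. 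Closed subgroups inherit the $p\,$-AP, so it is enough to exhibit the obstruction on $H$.

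The crux of the argument is running the $SQ_p$-based asymptotic analysis on $H$, which, unlike in the SL(3,R) case, has a non-abelian unipotent radical. The key observation that makes this step parallel to the $\text{SL}(3,\R)$ setting is that the $\text{SL}(2,\R)$-action on $H_3$ factors through the quotient $H_3 / Z(H_3) \cong \R^2$, so the integrable contribution to the matrix coefficient expansion (\ref{phiab}) reduces to the same kind of two-dimensional computation carried out in the proof of Theorem \ref{SL3pAP}. Once this reduction is made, the pairing argument using $\tilde{\alpha}, \tilde{\beta}$ from Remark \ref{SQpLp} forces the subsequential pointwise limit of $\varphi_i$ restricted to a suitable one-parameter subgroup of $A$ to decay in a prescribed way, contradicting weak$^*$ convergence to $1$. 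The main obstacle is precisely the verification that the Heisenberg center does not interfere with the integration-by-parts or scaling step at the end of the $\text{SL}(3,\R)$ argument, and this is what the paper refers to as the details that can be easily filled in.
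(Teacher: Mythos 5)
Your proposal has genuine gaps and does not reach the paper's argument. The first and most serious problem is the opening step: you ``choose a net $(\varphi_i)$ in $A_{p,c}(G)$, bounded in $M_{p\text{-}cb}(G)$''. The $p\,$-AP gives only weak* convergence $\varphi_i\to 1$ in $\sigma(M_{p\text{-}cb}(G),Q_{p\text{-}cb}(G))$; a weak*-convergent \emph{net} need not be bounded, and no boundedness can be extracted. Assuming it conflates the $p\,$-AP with $p\,$-weak amenability, and the entire point of the paper's construction is to avoid any such bound: one builds a single fixed functional $\mu\in Q_{p\text{-}cb}(G)$ (as the limit of the Cauchy net of $K$-bi-invariant probability measures $m_g$ in $M_{p\text{-}cb}(G)^*$) satisfying $\langle\mu,1\rangle=1$ while $\langle\mu,\varphi\rangle=0$ for every $\varphi\in M_{p\text{-}cb}(G)\cap C_c(G)$, and tests weak* convergence against that one element. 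Your subsequent appeal to Remark \ref{SQpLp} is also unusable as stated: the remark explicitly warns that the extended maps $\tilde\alpha,\tilde\beta$ are not known to be continuous, which is precisely why the paper never runs a ``pairing argument'' with them.

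The second problem is the proposed reduction to $H=\text{SL}(2,\R)\ltimes H_3$. This is not ``analogous to the role played by $\text{SL}(2,\R)\ltimes\R^2$ inside $\text{SL}(3,\R)$'' in this paper, because the paper's proof for $\text{SL}(3,\R)$ makes no use of that subgroup at all: it works directly on $G$ with the averaging operators $T_\delta$ on $L_p(\text{SO}(3))$, an interpolated H\"older estimate $\|T_\delta-T_0\|_{L_p\to L_p}\lesssim|\delta|^{1/p}$, and a path-following argument in the closed Weyl chamber. Passing to $H$ would in any case only transfer the burden (closed subgroups inherit the $p\,$-AP, so you would need to disprove the $p\,$-AP for $H$, which you do not do), and the claim that the Heisenberg center ``does not interfere'' is asserted, not verified. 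What the paper actually does for $\text{Sp}(2,\R)$ --- and what is missing entirely from your proposal --- is the quantitative core: two families of operators $T_\theta$ and $S_\theta$ on $L_p(K)$ with $K\cong\text{U}(2)$, the estimates $\|T_\theta-T_{\pi/4}\|\lesssim|\theta-\tfrac{\pi}{4}|^{1/p}$ and $\|S_{\theta_1}-S_{\theta_2}\|\lesssim|\theta_1-\theta_2|^{1/2p}$ obtained by interpolation from the $L_2$ bounds of de Laat, the transfer of these bounds to the averages $\tilde\varphi(D(\beta,\gamma))$ via $PM_p(K)=A_p(K)^*$, and the chain of lemmas controlling $|\tilde\varphi(D(\beta_1,\gamma_1))-\tilde\varphi(D(\beta_2,\gamma_2))|$ by $e^{-\beta_1/32p}$ along explicit paths in the type $C_2$ chamber. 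Without some substitute for this machinery there is no contradiction to derive.
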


These two results imply our main theorem.

\begin{thm}\label{LiepAP}
	Let $1<p<\infty$ and $G$ be a connected simple Lie group with finite center.
	\begin{itemize}
		\item[a)] $G$ has the $p\,$-AP if and only if it has real rank 0 or 1.
		\item[b)] If $\Gamma$ is a lattice in $G$, then $\Gamma$ has the $p\,$-AP if and only if $G$ has real rank 0 or 1.
	\end{itemize}
\end{thm}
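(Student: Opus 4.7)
The plan is to deduce Theorem~\ref{LiepAP} from Theorems~\ref{SL3pAP} and~\ref{Sp2pAP}, combined with the structure theory of simple Lie groups and the standard hereditary properties of the $p\,$-AP.

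For the \emph{if} direction of (a), if $G$ has real rank $0$ then $G$ is compact, hence amenable, and amenability yields a bounded approximate identity in $A_p(G)$, which forces the $p\,$-AP. If $G$ has real rank $1$, then $G$ is $p\,$-weakly amenable: the spherical-function construction of Cowling~\cite{Cow} produces an approximate identity for $A_p(G)$ bounded in $M_{p\text{-}cb}(G)$ for every $1<p<\infty$, and $p\,$-weak amenability implies the $p\,$-AP.

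For the \emph{only if} direction of (a), assume the real rank of $G$ is at least $2$. By the classification of restricted root systems, the root system of $G$ contains a closed rank-$2$ subsystem of type $A_2$ or $C_2$, and integrating the corresponding Lie subalgebra yields a closed connected simple Lie subgroup $H\leq G$ locally isomorphic to either $\text{SL}(3,\R)$ or $\text{Sp}(2,\R)$. Since the $p\,$-AP is invariant under finite covers and quotients by finite normal subgroups, Theorems~\ref{SL3pAP} and~\ref{Sp2pAP} imply that $H$ does not have the $p\,$-AP. I then invoke the fact that the $p\,$-AP descends to closed subgroups: given a decomposition $\varphi(st^{-1})=\langle\beta(s),\alpha(t)\rangle$ with $\alpha:G\to E$, $\beta:G\to E^*$ and $E\in SQ_p$, restricting $\alpha$ and $\beta$ to $H$ exhibits $\varphi|_H$ as an element of $M_{p\text{-}cb}(H)$ of no greater norm, and a predual-level duality argument transfers the $\sigma(M_{p\text{-}cb}(G),Q_{p\text{-}cb}(G))$ convergence of an approximating net in $A_p(G)$ to the corresponding convergence in $M_{p\text{-}cb}(H)$. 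Hence $G$ cannot have the $p\,$-AP.

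Part (b) follows from (a) together with the transfer principle: a lattice $\Gamma\leq G$ has the $p\,$-AP if and only if $G$ does. The argument mirrors the $p=2$ case of Haagerup--Kraus~\cite{HaaKra}: restriction is a contraction $M_{p\text{-}cb}(G)\to M_{p\text{-}cb}(\Gamma)$, while extension from $\Gamma$ to $G$ is implemented by averaging over a Borel fundamental domain, and both operations respect the relevant weak-$*$ topologies. The main obstacle is the hereditary step for the $p\,$-AP: although restriction of multipliers via the characterization~(\ref{phiab}) is straightforward, one must also verify that weak-$*$ convergence in $M_{p\text{-}cb}(G)=Q_{p\text{-}cb}(G)^*$ descends to weak-$*$ convergence of the restrictions in $M_{p\text{-}cb}(H)=Q_{p\text{-}cb}(H)^*$, which requires producing compatible maps at the predual level. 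The $p=2$ analogue is classical~\cite{HaaKra}, and the $p\,$-operator-space version goes through by the same strategy.
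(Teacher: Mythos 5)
Your overall architecture matches the paper's: locate a closed subgroup $H\leq G$ locally isomorphic to $\text{SL}(3,\R)$ or $\text{Sp}(2,\R)$ (via Margulis's structure theory), pass the failure of the $p\,$-AP from $\text{SL}(3,\R)$ or $\text{Sp}(2,\R)$ to $H$ through the quotient by the finite center, then up to $G$ by heredity to closed subgroups; handle ranks $0$ and $1$ by amenability and weak amenability; and transfer between $G$ and a lattice $\Gamma$ by restriction and by averaging over a fundamental domain. However, there is a genuine gap at the single most delicate point, the heredity of the $p\,$-AP to closed subgroups. You assert that the $p=2$ argument of Haagerup--Kraus ``goes through by the same strategy,'' but the paper explicitly notes (remark after Proposition \ref{HpAP}) that their proof \emph{cannot} be adapted, because there is no known analogue of \cite[Theorem 1.11]{HaaKra} for $p\neq 2$. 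Moreover, your sketch only addresses the easy half: restricting $\alpha$ and $\beta$ to $H$ does show $\varphi|_H\in M_{p\text{-}cb}(H)$ with no increase of norm, but the actual content is the ``predual-level duality argument'' you wave at. The naive predual map (extend an $L_1(H)$ function by zero to $G$) fails because $H$ is typically a null set in $G$, so plain restriction of multipliers is not obviously weak-$*$ continuous. The paper's Proposition \ref{HpAP} gets around this by fixing a bump function $\psi\in C_c(G)$ with $\int\psi=1$ and using the convolution map $\Phi(f)=\mu_f\ast\psi$ from $L_1(H)$ to $L_1(G)$; its adjoint is $\varphi\mapsto(\varphi\ast\check\psi)|_H$ rather than restriction, and one must then separately check that this adjoint sends $A_p(G)$ into $A_p(H)$ (via Herz's restriction theorem) and fixes the constant $1$. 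Without some such construction, your deduction that $G$ fails the $p\,$-AP does not close.

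Two smaller points. First, for real rank $1$ you claim directly that Cowling's spherical-function construction yields an approximate identity for $A_p(G)$ bounded in $M_{p\text{-}cb}(G)$ for every $p$; this is stronger than what is needed and not justified as stated (the relevant reference for weak amenability is Cowling--Haagerup, and one would still have to check that the net is an approximate identity for $A_p(G)$ and not just $A_2(G)$). The paper's cleaner route is: weak amenability $\Rightarrow$ AP $\Rightarrow$ $p\,$-AP for all $p$, the last implication being Proposition \ref{q-APp-AP}. Second, for part (b) your description of the averaging map over a Borel fundamental domain is the right idea and matches Proposition \ref{pAPGamma}, but note that the direction you actually need beyond (a) and closed-subgroup heredity is that the $p\,$-AP passes \emph{up} from $\Gamma$ to $G$; the continuity of the extended multiplier and its membership in $M_{p\text{-}cb}(G)$ require the $L_p(\Omega;E)$ construction together with Remark \ref{SQpLp}, which your sketch omits.
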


The second part of this theorem provides examples of exact discrete groups which fail to have the $p\,$-AP for every $1<p<\infty$. One of them is $\text{SL}(3,\Z)$.

Following the work of Lafforgue and de la Salle \cite{LafdlS}, where they proved that $\text{SL}(3,\R)$ does not have the AP, Haagerup and de Laat \cite{HaadL} showed that all connected simple Lie groups with finite center and real rank greater than 1 fail to have the AP. This was achieved by treating the case of $\text{Sp}(2,\R)$ and then using some stability properties of the AP. We follow the same strategy of \cite{HaadL} with some slight differences. Instead of using the general theory of Gelfand pairs and the decomposition of completely bounded multipliers on spherical functions, we work with some families of averaging operators on $L_2(K)$, where $K$ is a maximal compact subgroup of $\text{SL}(3,\R)$ (resp. $\text{Sp}(2,\R)$). This allows, by interpolation, to obtain the same kind of results for $p\,$-completely bounded multipliers. These averaging operators were already considered in \cite{Laf}, \cite{LafdlS} and \cite{Laa}. Also, we don't make use of the Krein-Smulian theorem. Instead of this, we construct a Cauchy net of measures on $\text{SL}(3,\R)$ (resp. $\text{Sp}(2,\R)$) that converges to a linear form that cannot exist if the group has the $p\,$-AP.

\begin{rmk}
	As a continuation of \cite{HaadL}, Haagerup and de Laat \cite{HaadL2} studied the AP for the universal covering group $\widetilde{\text{Sp}}(2,\R)$ of $\text{Sp}(2,\R)$, which allowed them to remove the assumption of finite center. Namely, a connected simple Lie group has the AP if and only if it has real rank 0 or 1. This suggests that the same kind of generalization can be made for Theorem \ref{LiepAP} by treating the case of $\widetilde{\text{Sp}}(2,\R)$.
\end{rmk}

This paper is organized as follows. In section 2 we state some preliminary results regarding the algebra $A_p(G)$ and the $p\,$-AP. We also recall the $KAK$ decomposition of $\text{SL}(3,\R)$ and $\text{Sp}(2,\R)$.
Sections 3 and 4 are devoted to the proofs of Theorems \ref{SL3pAP} and \ref{Sp2pAP} respectively.
In section 5 we prove some stability properties of the $p\,$-AP, which allow us to obtain Theorem \ref{LiepAP} as a consequence of the ones above.
Finally, in section 6 we show how the arguments in \cite{DawSp} can be adapted to prove that the $p\,$-AP implies that $PM_p(G)=CV_p(G)$.

\section{Preliminaries}

In this section we study how the $p\,$-AP relates to the $q\,$-AP for different values of $p$ and $q$. Then we prove that, for a compact group $K$, the inclusion $A_p(K)\hookrightarrow M_{p\text{-}cb}(K)$ is surjective. We also recall the $KAK$ decomposition of a connected semisimple Lie group with finite center. This is an essential step, that will allow us to restrict the analysis to the maximal compact subgroups of $\text{SL}(3,\R)$ and $\text{Sp}(2,\R)$.

\subsection{Relations between $p\,$-AP and $q\,$-AP}

Let $G$ be a locally compact group endowed with a left Haar measure. The following proposition was proven in \cite[\S 6]{An} for $G$ discrete. We include the proof of the general case for the sake of completeness.

\begin{prop}\label{q-APp-AP}
	Let $1<p\leq q\leq 2$ or $2\leq q\leq p<\infty$. If $G$ has the $q\,$-AP, then it has the $p\,$-AP.
\end{prop}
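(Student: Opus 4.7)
The plan is to show that a net witnessing the $q\,$-AP, viewed through appropriate inclusions of multiplier and predual spaces, already witnesses the $p\,$-AP. Three ingredients are required: a contractive inclusion $M_{q\text{-}cb}(G) \hookrightarrow M_{p\text{-}cb}(G)$, the resulting dual map between preduals, and a bump-function trick to place the transferred net in $A_p(G)$.

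For the first ingredient, given $\varphi \in M_{q\text{-}cb}(G)$ with representation $\varphi(st^{-1}) = \langle \beta(s), \alpha(t) \rangle$ on $E \in SQ_q$ as in (\ref{phiab}), I aim to produce a representation of the same form on some $F \in SQ_p$ without increasing the norm. The key Banach-space input is Pisier's theory of $SQ_r$ spaces: whenever $q$ lies between $p$ and $2$ in the sense of the hypothesis, every $SQ_q$ space is isometrically an $SQ_p$ space. Concretely, one lifts $\alpha, \beta$ via Remark \ref{SQpLp} to $L_q(\Omega)$- and $L_{q'}(\Omega)$-valued maps, and then interpolates the corresponding bilinear form with the trivial representation of the constant function $\mathbf{1} \in M_{r\text{-}cb}(G)$ (of norm $1$) using $L_p = [L_q, L_\infty]_\theta$ with $\theta = 1 - q/p$ in the case $2 \le q \le p$. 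This yields
\begin{equation*}
\|\varphi\|_{M_{p\text{-}cb}(G)} \le \|\varphi\|_{M_{q\text{-}cb}(G)}^{1-\theta}\, \|\varphi\|_{\infty}^{\theta} \le \|\varphi\|_{M_{q\text{-}cb}(G)}.
\end{equation*}
The case $1 < p \le q \le 2$ reduces to this via the conjugate-exponent symmetry $\|\varphi\|_{M_{r\text{-}cb}(G)} = \|\check{\varphi}\|_{M_{r'\text{-}cb}(G)}$, obtained by swapping the roles of $\alpha$ and $\beta$ and passing to $E^* \in SQ_{r'}$.

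The second ingredient is formal. From $\|\varphi\|_{M_{p\text{-}cb}(G)} \le \|\varphi\|_{M_{q\text{-}cb}(G)}$ it follows that $\|f\|_{M_{p\text{-}cb}(G)^*} \ge \|f\|_{M_{q\text{-}cb}(G)^*}$ for every $f \in L_1(G)$ (the supremum defining the former ranges over a larger ball), so the identity on $L_1(G)$ extends to a contractive map $\iota : Q_{p\text{-}cb}(G) \to Q_{q\text{-}cb}(G)$. By density of $L_1(G)$ and continuity of the pairing, $\langle f, \varphi \rangle_{M_{p\text{-}cb}} = \langle \iota(f), \varphi \rangle_{M_{q\text{-}cb}}$ for all $f \in Q_{p\text{-}cb}(G)$ and $\varphi \in M_{q\text{-}cb}(G) \subseteq M_{p\text{-}cb}(G)$.

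Finally, take a $q\,$-AP net $(\psi_i) \subseteq A_{q,c}(G)$ by Remark \ref{Apc}. Each $\psi_i$ has compact support $K_i$ and lies in $M_{p\text{-}cb}(G)$ by the first step. A standard construction (for instance, $\chi_i = |V|^{-1} \mathbf{1}_{K_i V} \ast \check{\mathbf{1}_V}$ for a small compact symmetric neighborhood $V$ of the identity) produces $\chi_i \in A_p(G) \cap C_c(G)$ equal to $1$ on $K_i$. Since $A_p(G)$ is a module over its multiplier algebra, $\psi_i = \chi_i \psi_i \in A_p(G)$. Then for any $f \in Q_{p\text{-}cb}(G)$,
\begin{equation*}
\langle f, \psi_i \rangle_{M_{p\text{-}cb}} = \langle \iota(f), \psi_i \rangle_{M_{q\text{-}cb}} \longrightarrow \langle \iota(f), 1 \rangle_{M_{q\text{-}cb}} = \langle f, 1 \rangle_{M_{p\text{-}cb}}
\end{equation*}
by the $q\,$-AP, establishing the $p\,$-AP. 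The main obstacle is the first ingredient: establishing the contractive inclusion $M_{q\text{-}cb}(G) \hookrightarrow M_{p\text{-}cb}(G)$ rests on nontrivial Banach-space interpolation in the $SQ_r$ framework, and one must also verify that continuity of $\alpha, \beta$ is preserved under the lifting and interpolation (the subtle point flagged in Remark \ref{SQpLp}).
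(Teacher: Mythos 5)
Your overall architecture is the same as the paper's: establish the contractive inclusion $M_{q\text{-}cb}(G)\hookrightarrow M_{p\text{-}cb}(G)$, dualize it to a contraction $Q_{p\text{-}cb}(G)\to Q_{q\text{-}cb}(G)$, arrange for the $q\,$-AP net to lie in $A_p(G)$, and pass to the limit. Your second and fourth steps are exactly the paper's, and your bump-function trick for the third step is a legitimate alternative to the paper's observation that the net may be taken in $\Lambda_q(C_c(G)\otimes C_c(G))=\Lambda_p(C_c(G)\otimes C_c(G))$ by density (note that the paper's version has the advantage of not depending on the first step, whereas yours needs $\psi_i\in M_{p\text{-}cb}(G)$ before $\chi_i\psi_i$ makes sense).

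The genuine problem is your first ingredient. The paper simply cites \cite[Proposition 6.1]{An} for the contraction $M_{q\text{-}cb}(G)\hookrightarrow M_{p\text{-}cb}(G)$; you attempt to prove it, and the argument you sketch does not work as written. Lifting $\alpha,\beta$ via Remark \ref{SQpLp} destroys continuity (the paper explicitly warns of this), and Daws' characterization requires continuous $\alpha,\beta$; moreover the claimed bound $\|\varphi\|_{M_{p\text{-}cb}}\le\|\varphi\|_{M_{q\text{-}cb}}^{1-\theta}\|\varphi\|_\infty^\theta$ is not a consequence of $L_p=[L_q,L_\infty]_\theta$ in any obvious way, since the two ``endpoint'' representations of $\varphi$ live on unrelated spaces and do not form a compatible interpolation couple, and $\|\varphi\|_\infty$ is not an endpoint multiplier norm. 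The fix is much simpler and you in fact state it without using it: under the hypothesis on $p,q$ one has the class inclusion $SQ_q\subseteq SQ_p$ (for $1<p\le q\le 2$ because $L_q$ embeds isometrically into $L_p$; for $2\le q\le p$ because $L_{q'}$ embeds isometrically into $L_{p'}$, so $L_q$ is isometrically a quotient of $L_p$, and the operation of taking subspaces of quotients is idempotent). Given this, the \emph{same} $E$, $\alpha$, $\beta$ witnessing $\varphi\in M_{q\text{-}cb}(G)$ already witness $\varphi\in M_{p\text{-}cb}(G)$ with the same constant, continuity included, and no lifting or interpolation is needed. With that replacement your proof is complete and coincides in substance with the paper's.
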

\begin{proof}
	By \cite[Proposition 6.1]{An}, the inclusion $\iota: M_{q\text{-}cb}(G)\hookrightarrow M_{p\text{-}cb}(G)$ is a contraction. Thus, the adjoint map $\iota^*:M_{p\text{-}cb}(G)^*\hookrightarrow M_{q\text{-}cb}(G)^*$ is also a contraction and it maps $Q_{p\text{-}cb}(G)$ to $Q_{q\text{-}cb}(G)$ because
	\begin{equation*}
	|\langle f,\varphi\rangle|\leq \|f\|_{M_{p\text{-}cb}(G)^*}\|\varphi\|_{M_{p\text{-}cb}(G)}\leq \|f\|_{M_{p\text{-}cb}(G)^*}\|\varphi\|_{M_{q\text{-}cb}(G)},
	\end{equation*}
	for all $f\in L_1(G)$ and $\varphi\in M_{q\text{-}cb}(G)$. Since $G$ has the $q\,$-AP, there is a net $(\varphi_i)$ in $A_q(G)$ such that $\varphi_i\to 1$ in $\sigma(M_{q\text{-}cb}(G),Q_{q\text{-}cb}(G))$. Observe that since $C_c(G)\otimes C_c(G)$ is dense in $L_{q'}(G)\hat{\otimes}L_q(G)$ and the map $\Lambda_q:L_{q'}(G)\hat{\otimes}L_q(G)\to A_q(G)$ is contractive, we may take the net in $\Lambda_q(C_c(G)\otimes C_c(G))$, which is equal to $\Lambda_p(C_c(G)\otimes C_c(G))$ as sets, so $\varphi_i=\iota(\varphi_i)\in A_p(G)$. Therefore, for all $\mu\in Q_{p\text{-}cb}$,
	\begin{equation*}
	\langle\iota(\varphi_i),\mu\rangle=\langle\varphi_i,\iota^*(\mu)\rangle\to \langle 1,\iota^*(\mu)\rangle=\langle 1,\mu\rangle.
	\end{equation*}
	Hence, $G$ has the $p\,$-AP.
\end{proof}

Now let $1<p,p'<\infty$ with $p^{-1}+p'^{-1}=1$. For a function $\varphi:G\to\C$ we put $\check{\varphi}(s)=\varphi(s^{-1})$. 

\begin{lem}
	The map $\varphi\mapsto\check{\varphi}$ defines an isometric isomorphism from $M_{p\text{-}cb}(G)$ to $M_{p'\text{-}cb}(G)$.
\end{lem}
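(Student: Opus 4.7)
The plan is to take a representation $\varphi(st^{-1}) = \langle \beta(s), \alpha(t) \rangle$ witnessing $\varphi \in M_{p\text{-}cb}(G)$, with $\alpha : G \to E$, $\beta : G \to E^*$ and $E \in SQ_p$, and to build out of it a representation of $\check{\varphi}$ of the same form but with an $SQ_{p'}$ space. Concretely, I would compute
\begin{equation*}
\check{\varphi}(st^{-1}) = \varphi(ts^{-1}) = \langle \beta(t), \alpha(s) \rangle_{E^*,E} = \langle \alpha(s), \beta(t) \rangle_{E^{**},E^*},
\end{equation*}
where the last equality uses the canonical isometric embedding $E \hookrightarrow E^{**}$. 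Setting $\tilde{E} := E^*$, $\tilde{\alpha}(t) := \beta(t) \in \tilde{E}$ and $\tilde{\beta}(s) := \alpha(s) \in E \subseteq E^{**} = \tilde{E}^*$, this reads $\check{\varphi}(st^{-1}) = \langle \tilde{\beta}(s), \tilde{\alpha}(t) \rangle$. Boundedness and continuity of $\tilde{\alpha}, \tilde{\beta}$ are inherited from $\beta, \alpha$, and clearly $\|\tilde{\alpha}\|_\infty \|\tilde{\beta}\|_\infty = \|\alpha\|_\infty \|\beta\|_\infty$.

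Next I would verify $\tilde{E} = E^* \in SQ_{p'}$. Writing $E$ as a subspace of a quotient $L_p(\Omega)/F$ (as in Remark \ref{SQpLp}), the standard duality $(L_p(\Omega)/F)^* = F^\perp \subseteq L_{p'}(\Omega)$ identifies $E^*$ with $F^\perp / E^\perp$, that is, a quotient of a subspace of $L_{p'}(\Omega)$, which lies in $SQ_{p'}$. Taking the infimum over all representations of $\varphi$ then yields $\|\check{\varphi}\|_{M_{p'\text{-}cb}(G)} \leq \|\varphi\|_{M_{p\text{-}cb}(G)}$.

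Finally, since $\check{\check{\varphi}} = \varphi$ and $(p')' = p$, applying the same construction to $\check{\varphi}$ gives the reverse inequality, so $\varphi \mapsto \check{\varphi}$ is an isometry. It is linear, and its inverse is the analogous map $M_{p'\text{-}cb}(G) \to M_{p\text{-}cb}(G)$, so one obtains an isometric isomorphism. There is no real obstacle in this proof: the argument is essentially the duality $E \in SQ_p \Longrightarrow E^* \in SQ_{p'}$ combined with the trivial symmetry of the pairing (\ref{phiab}) under swapping its two arguments. The only mildly delicate point is the passage through the bidual $E \hookrightarrow E^{**}$, which is isometric and therefore harmless.
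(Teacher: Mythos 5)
Your proof is correct and follows essentially the same route as the paper's: swap the roles of $\alpha$ and $\beta$ via the isometric embedding $E\hookrightarrow E^{**}$, check that $E^*\cong F^{\perp}/E^{\perp}\in SQ_{p'}$, and obtain the reverse inequality by symmetry since $\check{\check{\varphi}}=\varphi$ and $(p')'=p$. Nothing further is needed.
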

\begin{proof}
	Let $\varphi\in M_{p\text{-}cb}(G)$. Let $E\in SQ_p$ and $\alpha:G\to E$, $\beta:G\to E^*$ be continuous bounded functions such that
	\begin{equation*}
	\varphi(st^{-1})=\langle\beta(s),\alpha(t)\rangle,\quad \forall s,t\in G.
	\end{equation*}
	Then $\check{\varphi}(st^{-1})=\langle\beta(t),\alpha(s)\rangle$. Now observe that $E^*\in SQ_{p'}$. Indeed, since $E\in SQ_p$, there exist a measure space $(\Omega,\mu)$ and a closed subspace $F\subset L_p(\mu)$ such that $E$ is a subspace of $L_p(\mu)/F$. So $E^*\cong \left(L_p(\mu)/F\right)^*/E^{\perp}$ and $(L_p(\mu)/F)^*\cong F^{\perp}\subset L_{p'}(\mu)$. Since $\alpha(t)$ defines an element of $E^{**}$ of equal norm, we get that $\check{\varphi}\in M_{p'\text{-}cb}(G)$ and $\|\check{\varphi}\|_{ M_{p'\text{-}cb}(G)}\leq \|\varphi\|_{ M_{p\text{-}cb}(G)}$. The other inequality follows analogously.
\end{proof}

For $\mu\in M_{p'\text{-}cb}(G)^*$, we may define $\check{\mu}\in M_{p\text{-}cb}(G)^*$ by duality
\begin{equation*}
\langle \check{\mu}, \varphi\rangle = \langle\mu, \check{\varphi}\rangle,\quad \forall \varphi\in M_{p\text{-}cb}(G),
\end{equation*}
which again defines an isometric isomorphism between $M_{p'\text{-}cb}(G)^*$ and $M_{p\text{-}cb}(G)^*$.

\begin{prop}\label{pAPiffp'AP}
	If $G$ has the $p\,$-AP, then $G$ has the $p'$-AP.
\end{prop}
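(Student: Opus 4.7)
The plan is to transport an approximating net realizing the $p\,$-AP through the check isometric isomorphism $\iota: M_{p\text{-}cb}(G)\to M_{p'\text{-}cb}(G)$ from the preceding lemma. Concretely, starting from a net $(\varphi_i)\subset A_p(G)$ with $\varphi_i\to 1$ in $\sigma(M_{p\text{-}cb}(G),Q_{p\text{-}cb}(G))$, I would show that $(\check\varphi_i)$ witnesses the $p'\,$-AP.

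Two ingredients are needed. First, each $\check\varphi_i$ must belong to $A_{p'}(G)$. A direct change of variable $u=st$ (using left invariance of Haar measure) in the definition of $\Lambda_p$ gives
\begin{equation*}
\check{\Lambda_p(g\otimes f)} = \Lambda_{p'}(f\otimes g),\qquad g\in L_{p'}(G),\ f\in L_p(G),
\end{equation*}
so the check map restricts to an isometry $A_p(G)\to A_{p'}(G)$ (alternatively, by Remark~\ref{Apc} one may choose $\varphi_i\in A_{p,c}(G)$ and invoke the set equality $A_{p,c}(G)=A_{p',c}(G)$ noted in the proof of Proposition~\ref{q-APp-AP}). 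Second and crucially, the dual map $\iota^*:\mu\mapsto\check\mu$ must send $Q_{p'\text{-}cb}(G)$ onto $Q_{p\text{-}cb}(G)$. I would verify this by computing its restriction to $L_1(G)$: for $f\in L_1(G)\subset M_{p'\text{-}cb}(G)^*$ and $\varphi\in M_{p\text{-}cb}(G)$, using \eqref{alphacb} and the change of variable $t\mapsto t^{-1}$,
\begin{equation*}
\langle\check f,\varphi\rangle = \int_G f(t)\varphi(t^{-1})\,dt = \int_G f(t^{-1})\Delta(t)^{-1}\varphi(t)\,dt,
\end{equation*}
so $\check f$ is identified with the element $t\mapsto f(t^{-1})\Delta(t)^{-1}$ of $L_1(G)$, which has the same $L_1$-norm. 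Hence $\iota^*$ restricts to an isometric bijection of $L_1(G)$; since $\iota^*$ is itself a global isometry and $L_1(G)$ is norm-dense in both $Q$-spaces, $\iota^*(Q_{p'\text{-}cb}(G))=Q_{p\text{-}cb}(G)$.

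With both ingredients in hand the conclusion is immediate: using $\check 1=1$, for every $\mu\in Q_{p'\text{-}cb}(G)$,
\begin{equation*}
\langle\check\varphi_i,\mu\rangle = \langle\varphi_i,\check\mu\rangle \to \langle 1,\check\mu\rangle = \langle 1,\mu\rangle,
\end{equation*}
so $\check\varphi_i\to 1$ in $\sigma(M_{p'\text{-}cb}(G),Q_{p'\text{-}cb}(G))$, proving $G$ has the $p'\,$-AP. The only substantive point in the argument is the stability of the $Q$-subspaces under the check isomorphism at the level of duals; this is handled by the explicit $L_1$-computation above, after which everything is formal.
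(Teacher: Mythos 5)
Your proposal is correct and follows essentially the same route as the paper: the identity $(g\ast\check f)^{\vee}=f\ast\check g$ to see that $\check\varphi_i\in A_{p'}(G)$, and the change of variable showing that $\check f$ corresponds to $t\mapsto\Delta(t^{-1})f(t^{-1})$ in $L_1(G)$, so that $\mu\mapsto\check\mu$ carries $Q_{p'\text{-}cb}(G)$ into $Q_{p\text{-}cb}(G)$, after which the duality computation is formal. The only cosmetic difference is that you note surjectivity of $\iota^*$ on the $Q$-spaces, which is not actually needed — mapping into $Q_{p\text{-}cb}(G)$ suffices.
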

\begin{proof}
	Let $(\varphi_i)$ be a net in $A_p(G)$ such that $\varphi_i\to 1$ in $\sigma(M_{p\text{-}cb}(G),Q_{p\text{-}cb}(G))$. The identity $(g\ast\check{f})^{\vee}=f\ast\check{g}$ shows that $\check{\varphi}_i\in A_{p'}(G)$ for all $i$. On the other hand, if $\mu\in Q_{p'\text{-}cb}(G)$ is given by a function $g\in L_1(G)$, a change of variable shows that $\check{\mu}$ is given by the function $\tilde{g}(s)=\Delta(s^{-1})g(s^{-1})$ which satisfies $\|\tilde{g}\|_1=\|g\|_1$. Thus, the isometry $\mu\mapsto\check{\mu}$ maps $Q_{p'\text{-}cb}(G)$ to $Q_{p\text{-}cb}(G)$. Therefore
	\begin{equation*}
	\langle \check{\varphi}_i, \mu\rangle = \langle \varphi_i, \check{\mu}\rangle \to \langle 1, \check{\mu}\rangle=\langle 1, \mu\rangle ,\quad\forall \mu\in Q_{p'\text{-}cb}(G).
	\end{equation*}
\end{proof}

Thanks to Proposition \ref{pAPiffp'AP} we may restrict all the analysis to the case $2\leq p <\infty$.

\subsection{The space $M_{p\text{-}cb}(K)$ for $K$ compact}

Recall that the Fig\`a-Talamanca--Herz algebra $A_p(G)$ is defined as the image of the map $\Lambda_p:L_{p'}(G)\hat{\otimes}L_p(G)\to C_0(G)$ given by (\ref{Lamda_p}). This is a particular case of what Herz calls representative functions (see \cite{Her}). As explained in \cite[\S 8]{Daw} in a more modern language, if $E$ is Banach space and $\pi:G\to\mathcal{B}(E)$ is a strongly-continuous representation such that $\pi(s)$ is an isometry for all $s\in G$, then we can define a map $\Pi:E^*\hat{\otimes}E\to C(G)$ by
\begin{equation*}
\Pi(\mu\otimes x)(s)=\langle \mu,\pi(s)x\rangle.
\end{equation*}
The space $A(\pi)$ is defined as the image of $\Pi$ endowed with the norm of $(E^*\hat{\otimes}E)/Ker(\Pi)$. Thus, by definition, $A_p(G)=A(\lambda_p)$. Moreover, consider $L_p(G;E)$ the Banach space of $E$-valued $L_p$ functions on $G$. The algebraic tensor product $L_p(G)\otimes E$ defines a dense subspace of $L_p(G;E)$ by identifying
\begin{equation*}
f\otimes x \mapsto f(\cdot)x,\quad \forall f\in L_p(G),\ \forall x\in E.
\end{equation*}
See \cite[Chapter 7]{Def} for more details. Then, we may define the representation $\lambda_p\otimes I_E:G\to\mathcal{B}(L_p(G;E))$ by
\begin{equation*}
\left[(\lambda_p\otimes I_E)(s)\right](f\otimes x) = (\lambda_p(s)f)\otimes x,\quad\forall s\in G,\ \forall f\in L_p(G),\ \forall x\in E.
\end{equation*}
Herz proved the following.
\begin{thm}{\cite[Lemma 0]{Her}}\label{AlambdaxI}
	If $E\in SQ_p$, then
	\begin{align*}
		A(\lambda_p\otimes I_E)=A(\lambda_p)=A_p(G).
	\end{align*}
\end{thm}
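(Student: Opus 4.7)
The plan is to prove $A(\lambda_p \otimes I_E) = A(\lambda_p) = A_p(G)$ via two contractive inclusions. First, for $A_p(G) \subseteq A(\lambda_p \otimes I_E)$: given $g \ast \check f$ with $g \in L_{p'}(G)$ and $f \in L_p(G)$, I would pick $x \in E$ and $\mu \in E^*$ of unit norm with $\langle \mu, x\rangle = 1$ (possible by Hahn-Banach since $E \neq 0$). The tensors $g \otimes \mu \in L_{p'}(G; E^*)$ and $f \otimes x \in L_p(G; E)$ satisfy
$$\langle g \otimes \mu,\, (\lambda_p \otimes I_E)(s)(f \otimes x)\rangle = \int_G g(t)\, f(s^{-1}t)\, dt = g \ast \check f(s),$$
with $\|g\otimes\mu\|\|f\otimes x\| = \|g\|_{p'}\|f\|_p$, and summing over a representation of $a \in A_p(G)$ shows $\|a\|_{A(\lambda_p \otimes I_E)} \leq \|a\|_{A_p(G)}$.

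For the reverse inclusion $A(\lambda_p \otimes I_E) \subseteq A_p(G)$, I would fix a witness $E \subseteq L_p(\Omega)/F_0$ of $E \in SQ_p$. Since $E$ is reflexive for $1<p<\infty$, $E^*$ has the Radon-Nikodym property, so $L_p(G;E)^* = L_{p'}(G;E^*)$ and it suffices to bound $\|\Pi(\Phi \otimes F)\|_{A_p(G)}$ for $\Phi \in L_{p'}(G; E^*)$ and $F \in L_p(G; E)$. Using the isometric identification $L_p(G; L_p(\Omega)/F_0) = L_p(G \times \Omega)/L_p(G; F_0)$ and its dual counterpart for the quotient $F_0^\perp \twoheadrightarrow E^* = F_0^\perp / E^\perp$, for any $\varepsilon > 0$ I would lift $F$ to $\tilde F \in L_p(G\times\Omega)$ and $\Phi$ to $\tilde\Phi \in L_{p'}(G; F_0^\perp) \subseteq L_{p'}(G\times\Omega)$ with $\|\tilde F\|_p \leq \|F\| + \varepsilon$ and $\|\tilde\Phi\|_{p'} \leq \|\Phi\| + \varepsilon$. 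Because $\tilde\Phi(t,\cdot)$ annihilates $F_0$, the pairing $\langle \Phi(t), F(s^{-1}t)\rangle_{E^*, E}$ is independent of the chosen lifts and equals $\int_\Omega \tilde\Phi(t, \omega)\, \tilde F(s^{-1}t, \omega)\, d\omega$, so Fubini yields
$$\Pi(\Phi \otimes F)(s) = \int_G \int_\Omega \tilde\Phi(t, \omega)\, \tilde F(s^{-1}t, \omega)\, d\omega\, dt.$$

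Finally, I would realize this double integral inside $A_p(G)$ with norm at most $\|\tilde\Phi\|_{p'}\|\tilde F\|_p$ by approximating $\tilde\Phi, \tilde F$ by simple functions $\sum_i \mathbf{1}_{A_i}\otimes g_i$ and $\sum_j \mathbf{1}_{B_j}\otimes f_j$ in the Bochner descriptions $L_{p'}(\Omega; L_{p'}(G))$ and $L_p(\Omega; L_p(G))$. For such simple tensors the formula collapses to the finite $A_p$-combination $\sum_{i,j} m(A_i\cap B_j)\, g_i \ast \check f_j$, whose $A_p$-norm is controlled by a Hölder estimate on $\Omega$ by $\|\tilde\Phi\|_{p'}\|\tilde F\|_p$; bilinearity and a standard Cauchy argument then extend both formula and bound to arbitrary $\tilde\Phi, \tilde F$. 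Letting $\varepsilon \to 0$ and summing over an absolutely summable representation of $a \in A(\lambda_p \otimes I_E)$ in $L_{p'}(G;E^*) \hat\otimes L_p(G;E)$ yields $\|a\|_{A_p(G)} \leq \|a\|_{A(\lambda_p \otimes I_E)}$. The hard part will be the lifting step in the subspace-of-quotient setting: controlling norms requires both the reflexivity of $E$ (for the dual identification) and the quotient-lifting identity for Bochner $L_p$-spaces, and the density argument must carefully transfer the continuous sum produced by the double integral into the discretely-defined algebra $A_p(G)$.
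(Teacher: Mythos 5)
Your argument is correct. Note, however, that the paper does not prove this statement at all: it is imported verbatim as Herz's Lemma 0 from \cite{Her}, so there is no in-paper proof to compare against. What you have written is essentially a reconstruction of Herz's original argument: the easy inclusion $A_p(G)\subseteq A(\lambda_p\otimes I_E)$ via a normalized dual pair $(x,\mu)$, and the converse by reducing to the model case $E=L_p(\Omega)$ through the subspace-of-quotient witness, where the identification $L_p(G;L_p(\Omega))=L_p(G\times\Omega)$ plus Fubini and H\"older (on a common refinement of the partitions of $\Omega$) gives the bound $\|\Pi(\Phi\otimes F)\|_{A_p(G)}\le\|\tilde\Phi\|_{p'}\|\tilde F\|_p$. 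The two genuinely delicate points are exactly the ones you flag: (i) the identification $L_p(G;E)^*=L_{p'}(G;E^*)$, which needs the reflexivity of $E\in SQ_p$ for $1<p<\infty$ (so that $E^*$ has the Radon--Nikod\'ym property) --- this is where the hypotheses on $E$ and on $p$ really enter, since the definition of $A(\pi)$ pairs against all of $X^*$, not just $L_{p'}(G;E^*)$; and (ii) the $\varepsilon$-isometric lifting through the quotient maps $L_p(G\times\Omega)\to L_p(G;L_p(\Omega)/F_0)$ and $L_{p'}(G;F_0^\perp)\to L_{p'}(G;E^*)$, which is the standard metric-surjection property of Bochner spaces over quotient maps. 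With those two facts granted, the simple-function density argument closes the proof as you describe.
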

This theorem allows us to obtain the following characterisation.

\begin{prop}\label{A=McbA}
	Let $1<p<\infty$. If $K$ is a compact group, then $M_{p\text{-}cb}(K)=A_p(K)$ with equality of norms.
\end{prop}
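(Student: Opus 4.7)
The plan is to reduce the statement to Theorem \ref{AlambdaxI} by realizing every $p\,$-completely bounded multiplier on $K$ as a representative coefficient of the representation $\lambda_p\otimes I_E$ on $L_p(K;E)$. Since $A_p(K)\hookrightarrow M_{p\text{-}cb}(K)$ is already contractive, I only need the reverse inclusion $M_{p\text{-}cb}(K)\subseteq A_p(K)$ with $\|\varphi\|_{A_p(K)}\leq\|\varphi\|_{M_{p\text{-}cb}(K)}$.

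First I would fix an arbitrary decomposition $\varphi(st^{-1})=\langle\beta(s),\alpha(t)\rangle$ with $E\in SQ_p$ and continuous bounded $\alpha:K\to E$, $\beta:K\to E^*$, and normalize the Haar measure of $K$ so that $|K|=1$. Because $\alpha$ and $\beta$ are continuous on a compact space, the vector-valued functions $x(t)=\alpha(t)$ and $\mu(t)=\beta(t)$ belong to $L_p(K;E)$ and $L_{p'}(K;E^*)$ respectively, with
\begin{equation*}
\|x\|_{L_p(K;E)}\leq\|\alpha\|_\infty,\qquad \|\mu\|_{L_{p'}(K;E^*)}\leq\|\beta\|_\infty.
\end{equation*}
Using the natural contractive embedding $L_{p'}(K;E^*)\hookrightarrow L_p(K;E)^*$, I would regard $\mu$ as a functional on $L_p(K;E)$.

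Next, since $\bigl[(\lambda_p\otimes I_E)(s)x\bigr](t)=\alpha(s^{-1}t)$, a direct computation gives
\begin{equation*}
\bigl\langle \mu,(\lambda_p\otimes I_E)(s)x\bigr\rangle
=\int_K\langle\beta(t),\alpha(s^{-1}t)\rangle\,dt
=\int_K\varphi\bigl(t(s^{-1}t)^{-1}\bigr)\,dt
=\int_K\varphi(s)\,dt=\varphi(s).
\end{equation*}
Thus $\varphi$ lies in $A(\lambda_p\otimes I_E)$, which equals $A_p(K)$ by Theorem \ref{AlambdaxI}. Since the projective tensor norm of $\mu\otimes x$ is at most $\|\alpha\|_\infty\|\beta\|_\infty$, I obtain $\|\varphi\|_{A_p(K)}\leq\|\alpha\|_\infty\|\beta\|_\infty$; taking the infimum over admissible decompositions yields the norm bound.

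The only delicate points are the Bochner measurability of $x$ and $\mu$ and the identification $L_{p'}(K;E^*)\subseteq L_p(K;E)^*$ underlying the pairing. Both are standard: continuous maps from the compact group $K$ into a Banach space have separable range and are therefore strongly measurable, and $E^*$ inherits the Radon-Nikodym property from the reflexivity of $E\in SQ_p$ (for $1<p<\infty$). I do not foresee any serious obstacle beyond citing these standard facts and being careful to fix the normalization of Haar measure so that the equality, and not merely the equivalence, of norms holds.
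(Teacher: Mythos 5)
Your proof is correct and follows essentially the same route as the paper: both realize $\varphi$ as a coefficient of $\lambda_p\otimes I_E$ on $L_p(K;E)$ via the contractive pairing of $L_{p'}(K;E^*)$ against $L_p(K;E)$ and then invoke Theorem \ref{AlambdaxI}, the only difference being that you observe the integrand $\langle\beta(t),\alpha(s^{-1}t)\rangle=\varphi(s)$ is constant while the paper reaches the same identity by inserting $\int_K\varphi(s^{-1}kk^{-1}t)\,dk$ and changing variables. (Your appeal to the Radon--Nikod\'ym property is unnecessary: the contractive inclusion $L_{p'}(K;E^*)\hookrightarrow L_p(K;E)^*$ holds for any Banach space $E$ by H\"older's inequality; RNP would only be needed for this inclusion to be onto, which you do not use.)
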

\begin{proof}
	Recall that the inclusion $A_p(K)\hookrightarrow M_{p\text{-}cb}(K)$ is a contraction. Now let $\varphi\in M_{p\text{-}cb}(K)$ and take $E$, $\alpha$ and $\beta$ as in (\ref{phiab}). Since $K$ is compact, we can endow it with its normalized Haar measure. Define $X=L_p(K;E)$ and observe that if $p'$ is the H\"older conjugate of $p$, then every $g\in L_{p'}(K;E^*)$ defines an element of $X^*$ by
	\begin{equation*}
	\langle g,f\rangle_{X^*,X}=\int_K \langle g(s),f(s)\rangle\,ds,\qquad\forall f\in X.
	\end{equation*}
	Thus, for all $s,t\in K$,
	\begin{align*}
	\varphi(s^{-1}t) &= \int_K \varphi(s^{-1}kk^{-1}t)\, dk  \\
	&= \int_K \langle\beta(s^{-1}k),\alpha(t^{-1}k)\rangle\, dk \\
	&= \int_K \langle\beta(k),\alpha(t^{-1}sk)\rangle\, dk\\
	&= \left\langle\beta, \left[(\lambda_p\otimes I_E)(s^{-1}t)\right]\alpha\right\rangle_{X^*,X}.
	\end{align*}
	Therefore, for all $k\in K$, 
	\begin{equation*}
	\varphi(k) = \left\langle\beta, \left[(\lambda_p\otimes I_E)(k)\right]\alpha\right\rangle_{X^*,X}.
	\end{equation*}
	Hence, Theorem \ref{AlambdaxI} implies that $\varphi\in A_p(K)$ with
	\begin{equation*}
	\|\varphi\|_{A_p(K)}\leq\|\beta\|_{L_{p'}(K;E^*)}\|\alpha\|_{L_{p}(K;E)}\leq \sup_{s\in K}\|\beta(s)\| \sup_{t\in K}\|\alpha(t)\|.
	\end{equation*}
	Taking the infimum over all $E$, $\alpha$ and $\beta$, we get $\|\varphi\|_{A_p(K)}\leq\|\varphi\|_{M_{p\text{-}cb}(K)}$.
\end{proof}

The following lemma will be useful when defining new multipliers as averages over a compact subgroup.
\begin{lem}\label{lemcontint}
	Let $K$ be a compact subgroup of $G$ endowed with its normalized Haar measure. Let $E$ be a Banach space and $f:G\to E$ a continuous function. Then the function $\tilde{f}:G\to E$ given by
	\begin{align*}
	\tilde{f}(t)=\int_Kf(k^{-1}t)\,dk
	\end{align*}
	is well defined and continuous.
\end{lem}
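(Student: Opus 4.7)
The plan splits the lemma into two pieces: showing that the integral defining $\tilde f(t)$ exists as a Bochner integral for each $t$, and then showing that $t\mapsto\tilde f(t)$ is norm-continuous.

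For well-definedness, I would fix $t\in G$ and consider the map $h_t:K\to E$ defined by $h_t(k)=f(k^{-1}t)$. Since $k\mapsto k^{-1}t$ is continuous from $K$ to $G$ and $f$ is continuous, $h_t$ is continuous on the compact space $K$. Consequently $h_t(K)$ is compact (hence norm-bounded and separable) in $E$. Pettis' measurability theorem then gives strong measurability, and together with boundedness and the finiteness of the normalised Haar measure on $K$ this yields Bochner integrability, so $\tilde f(t)\in E$ is well defined.

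For continuity, I would fix $t_0\in G$ and choose a compact symmetric neighbourhood $W$ of the identity in $G$. The set $C:=K^{-1}t_0W$ is compact, so $M:=\sup_{x\in C}\|f(x)\|_E$ is finite by continuity of $f$. For every $s\in W$ and every $k\in K$ one has $k^{-1}t_0s\in C$, hence
\begin{equation*}
\|\tilde f(t_0s)-\tilde f(t_0)\|_E\leq \int_K\|f(k^{-1}t_0s)-f(k^{-1}t_0)\|_E\,dk,
\end{equation*}
with integrand bounded by $2M$ uniformly in $k$ and $s\in W$. For each fixed $k\in K$ the continuity of $f$ gives $f(k^{-1}t_0s)\to f(k^{-1}t_0)$ as $s\to e$ in $G$, so the Bochner version of the dominated convergence theorem forces the right-hand side to $0$. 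This proves continuity of $\tilde f$ at $t_0$, and since $t_0$ was arbitrary, $\tilde f$ is continuous on $G$.

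The only mild subtlety is that $E$ is merely a Banach space, so I must invoke Bochner integration and the vector-valued dominated convergence theorem rather than their scalar counterparts; however, the continuity of $f$ together with the compactness of $K$ makes both the strong-measurability check (via Pettis) and the uniform pointwise bound completely routine, so no serious obstacle arises.
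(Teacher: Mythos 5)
Your proof is correct in substance but reaches the continuity statement by a different route than the paper. The paper truncates: it picks a compact symmetric neighbourhood $V$ of $1$, a cutoff $\psi\in C_c(G)$ equal to $1$ on $VK$, and applies the fact that the compactly supported continuous $E$-valued function $\eta_t(s)=\psi(s)f(s^{-1}t)$ is \emph{left uniformly continuous}, which immediately gives $\sup_{k\in K}\|f(k^{-1}st)-f(k^{-1}t)\|\to 0$ as $s\to 1$ and hence the estimate on $\|\tilde f(st)-\tilde f(t)\|$. You instead bound the integrand by $2M$ on the compact set $K^{-1}t_0W$ and invoke pointwise convergence plus dominated convergence. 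Your version is arguably more elementary, and your treatment of well-definedness (compact range, Pettis measurability, Bochner integrability) is more careful than the paper's one-line remark.

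One caveat: the dominated convergence theorem is a statement about sequences, whereas ``$s\to e$ in $G$'' is a limit along the neighbourhood filter; DCT fails for nets in general, and sequential continuity does not imply continuity unless $G$ is first countable. In every application in the paper ($\mathrm{SL}(3,\R)$, $\mathrm{Sp}(2,\R)$, second countable $G$ in Section 5) this is harmless, but for the lemma as stated you should replace the DCT step by the observation that $(k,s)\mapsto f(k^{-1}t_0s)$ is continuous on the compact set $K\times W$, so that $\sup_{k\in K}\|f(k^{-1}t_0s)-f(k^{-1}t_0)\|\to 0$ as $s\to e$ (a tube-lemma argument). That uniform statement is exactly the ``left uniform continuity'' fact the paper uses, so the two proofs converge at this point; with that substitution your argument is complete.
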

\begin{proof}
	This function is well defined because $f$ is continuous and $K$ is compact. For the continuity, we will use the fact that, if $\eta:G\to E$ is continuous and compactly supported, then it is left uniformly continuous (see e.g. the proof of \cite[Proposition 2.6]{Foll}). This means that
	\begin{align*}
	\sup_{t\in G}\left\|\eta(s^{-1}t)-\eta(t)\right\| \xrightarrow[s\to 1]{} 0.
	\end{align*}
	Let $V$ be a compact symmetric neighbourhood of $1\in G$ and define $\tilde{K}=VK$. Observe that this is a compact subset of $G$ and $K\subset\tilde{K}$. Take a function $\psi\in C_c(G)$ such that $\psi=1$ on $\tilde{K}$. Now fix $t\in G$ and define $\eta_t(s)=\psi(s)f(s^{-1}t)$. We know that
	\begin{align*}
	\sup_{k\in K}\left\|\eta_t(s^{-1}k)-\eta_t(k)\right\| \xrightarrow[s\to 1]{} 0.
	\end{align*}
	Moreover, for all $s\in V$ and $k\in K$,
	\begin{align*}
	\eta_t(s^{-1}k) = \psi(s^{-1}k)f(k^{-1}st) = f(k^{-1}st).
	\end{align*}
	Therefore, for all $s\in V$,
	\begin{align*}
	\left\|\tilde{f}(st)-\tilde{f}(t)\right\| &= \left\|\int_K\left(\eta_t(s^{-1}k) - \eta_t(k)\right)\,dk\right\|\\
	& \leq \sup_{k\in K}\left\|\eta_t(s^{-1}k)-\eta_t(k)\right\| \xrightarrow[s\to 1]{} 0.
	\end{align*}
\end{proof}

\subsection{The $KAK$ decomposition}

If $G$ is a connected semisimple Lie group, then its Lie algebra $\mathfrak{g}$ admits a Cartan decomposition $\mathfrak{g}=\mathfrak{k}+\mathfrak{p}$. The real rank of $G$ is defined as the dimension of any maximal abelian subspace $\mathfrak{a}$ of $\mathfrak{p}$. If $G$ has finite center, then it may be decomposed as $G=KAK$, where $K$ is a maximal compact subgroup and $A=\exp\mathfrak{a}$. This decomposition is not unique; however, fixing a Weyl chamber $\mathfrak{a}^+$ and letting $A^+=\exp\mathfrak{a}^+$, we still have $G=K\overline{A^+}K$, where $\overline{A^+}$ is the closure of $A^+$ in $G$. See \cite[\S IX.1]{Hel} for details. The advantage of this decomposition is that for every $g\in G$, if $g=kak'$ with $k,k'\in K$, $a\in\overline{A^+}$, then $a$ is unique. This is the main tool that will allow us to work with averages of multipliers when $G$ is $\text{SL}(3,\R)$ or $\text{Sp}(2,\R)$, since in those cases $K$ and $\overline{A^+}$ are explicit and very well known.

\subsubsection{The $K\overline{A^+}K$ decomposition of $\text{SL}(3,\R)$}
The special linear group $G=\text{SL}(3,\R)$ is the Lie group of $3\times 3$ matrices over $\R$ with determinant 1. The group $K=\text{SO}(3,\R)=\{g\in G\ :\ g^tg=I\}$ is a maximal compact subgroup of $G$. If we put
\begin{equation*}
D(r,s,t)=\left(\begin{matrix} e^r & 0 & 0\\ 0 & e^s & 0\\ 0 & 0 & e^t \end{matrix}\right),\quad r,s,t\in\R,
\end{equation*}
then $\overline{A^+}$ is given by
\begin{equation*}
\overline{A^+}=\{D(r,s,t)\ :\ r\geq s \geq t,\ r+s+t=0\}.
\end{equation*}

\subsubsection{The $K\overline{A^+}K$ decomposition of $\text{Sp}(2,\R)$}
The symplectic group $G=\text{Sp}(2,\R)$ is the Lie group of all $4\times 4$ matrices over $\R$ that preserve the standard symplectic form $\omega(x,y)=\langle Jx, y\rangle$, where
\begin{equation}\label{defJ}
J=\left(\begin{matrix} 0 & 0 & 1 & 0\\ 0 & 0 & 0 & 1\\ -1& 0 & 0 & 0\\ 0 & -1 & 0 & 0 \end{matrix}\right).
\end{equation}
Namely, $G=\{g\in M_4(\R)\ :\ g^tJg=J\}$. A maximal compact subgroup of $G$ is given by
\begin{equation}\label{defK=U2}
K=\left\{\left(\begin{matrix} A & -B\\ B & A\end{matrix}\right)\in M_4(\R) \ \Big|\ A+\ii B\in \text{U}(2)\right\}.
\end{equation}
This group is isomorphic to $\text{U}(2)$ via the following map
\begin{equation}\label{isomKU2}
\left(\begin{matrix} a+\ii b & e+\ii f\\ c+\ii d & g+\ii h\end{matrix}\right)\mapsto
\left(\begin{matrix} a & e & -b & -f\\ c & g & -d & -h\\ b & f & a & e\\ d & h & c & g\end{matrix}\right).
\end{equation}
Let
\begin{equation}\label{Dbetagamma}
D(\beta,\gamma)=\left(\begin{matrix} e^\beta & 0 & 0 & 0\\ 0 & e^\gamma & 0 & 0\\ 0 & 0 & e^{-\beta} & 0\\ 0 & 0 & 0 & e^{-\gamma} \end{matrix} \right),\quad \beta,\gamma\in\R.
\end{equation}
Then $\overline{A^+}=\{D(\beta,\gamma)\ :\ \beta\geq\gamma\geq 0\}$.

\section{The special linear group $\text{SL}(3,\R)$}

This section is devoted to the proof of Theorem \ref{SL3pAP}. In the following we fix $G=\text{SL}(3,\R)$, $K=\text{SO}(3,\R)$ and $p\in[2,\infty)$.

\subsection{The operators $T_\delta$ and averages of multipliers}

The idea of the proof of Theorem \ref{SL3pAP} is to find an element $\mu\in Q_{p\text{-}cb}(G)$ such that $\langle 1,\mu\rangle=1$ and $\langle\varphi,\mu\rangle=0$ for every $\varphi\in M_{p\text{-}cb}(G)\cap C_c(G)$. In order to do this, we will construct a Cauchy net of functions $(f_i)$ in $L_1(G)$ such that
\begin{equation*}
\int f_i = 1\quad\text{and}\quad \int f_i\varphi \to 0,
\end{equation*}
for every $\varphi\in C_c(G)$. The main tool used for this construction is the family of operators $(T_\delta)$ considered in \cite[\S 2]{Laf} and \cite[\S 5]{LafdlS}, which can be viewed as operators on $L_p(K)$ when identifying the sphere $S^2$ with a quotient of $\text{SO}(3,\R)$. We detail this now. Let $U$ be the subgroup of $K$ given by
\begin{equation*}
U=\left(\begin{matrix}
1 & 0\\
0 & \text{SO}(2,\R)
\end{matrix}\right).
\end{equation*}
For $\delta\in[-1,1]$, we define $T_{\delta}\in\mathcal{B}(L_p(K))$ by
\begin{equation}\label{defT_d}
T_{\delta}f=\int_U\int_U\lambda_p(uk_{\delta}u')f\,du\,du',\quad\forall f\in L_p(G),
\end{equation}
where
\begin{equation*}
k_{\delta}=\left(\begin{matrix}
\delta & -\sqrt{1-\delta^2} & 0\\
\sqrt{1-\delta^2} & \delta & 0\\
0 & 0 & 1
\end{matrix}\right) \in K.
\end{equation*}
This operator is well defined because, for every $f\in L_p(G)$, the map
\begin{align*}
s\in G\mapsto\lambda_p(s)f \in L_p(G)
\end{align*}
is continuous. Observe that $U\backslash K$ may be identified with the sphere $S^2$ by
\begin{equation*}
Uk\mapsto k^{-1}e_1,
\end{equation*}
where $e_1=\left(\begin{matrix}1\\ 0\\ 0\end{matrix}\right)$. If we put
\begin{equation*}
{}^U\!L_2(K)=\{f\in L_2(K)\ :\ \forall u\in U,\ \lambda_2(u)f=f \},
\end{equation*}
then this identification defines an isometry $\Psi:L_2(S^2)\to {}^U\!L_2(K)$ by
\begin{equation*}
[\Psi f](k)=f(k^{-1}e_1),\quad \forall f\in C(S^2),\ \forall k\in K,
\end{equation*}
which extends to $L_2(S^2)$ by density. This isometry relates the operators $T_\delta$ with those defined in \cite{Laf}. Since $T_\delta(L_2(K))\subseteq {}^U\!L_2(K)$, the operator $\Theta_\delta=\Psi^{-1}T_\delta \Psi$ is well defined on $L_2(S^2)$.

\begin{prop}\label{JTdJ}
	For all $f\in C(S^2)$, $x\in S^2$ and $\delta\in(-1,1)$, $[\Theta_\delta f](x)$ is the average of $f$ on the circle $\{y\in S^2 : \langle x,y\rangle=\delta\}$. Moreover, $[\Theta_1 f](x)=f(x)$ and $[\Theta_{-1} f](x)=f(-x)$.
\end{prop}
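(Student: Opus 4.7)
The plan is to unwind the definitions and perform the computation directly, exploiting the key facts that $U$ fixes $e_1$ and acts on the orthogonal plane by rotations. I will only write out the case $\delta\in(-1,1)$; the boundary cases will follow by inspecting $k_{\pm 1}$.

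First I would compute $T_\delta(\Psi f)(k)$ explicitly. By the definition of $\lambda_p$ and of $\Psi$,
\begin{align*}
[\lambda_p(uk_\delta u')\Psi f](k) &= [\Psi f]\bigl((uk_\delta u')^{-1}k\bigr) = f\bigl(k^{-1}u\,k_\delta\,u' e_1\bigr).
\end{align*}
Because $U$ is the stabilizer of $e_1$ in $K$ (it consists of block matrices with a $1$ and a copy of $\mathrm{SO}(2)$ on the orthogonal plane), $u'e_1=e_1$, so the inner integral over $u'$ is trivial and
\begin{equation*}
T_\delta(\Psi f)(k)=\int_U f\bigl(k^{-1}u\,k_\delta e_1\bigr)\,du.
\end{equation*}

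Next I would identify the set $\{u\,k_\delta e_1 : u\in U\}$. A direct computation gives $k_\delta e_1=(\delta,\sqrt{1-\delta^2},0)^t$, so $\langle k_\delta e_1, e_1\rangle=\delta$. Since $U$ acts on the plane $e_1^{\perp}$ as $\mathrm{SO}(2)$, the orbit $U\cdot k_\delta e_1$ is exactly the latitude circle $C_\delta=\{z\in S^2 : \langle z,e_1\rangle=\delta\}$, and the Haar measure on $U$ pushes forward to the normalized rotation-invariant probability measure on $C_\delta$. Hence $T_\delta(\Psi f)(k)$ is the average of $y\mapsto f(k^{-1}y)$ over $C_\delta$.

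Finally, unwinding $\Theta_\delta=\Psi^{-1}T_\delta\Psi$: for $x\in S^2$, pick $k\in K$ with $k^{-1}e_1=x$, so that $(\Theta_\delta f)(x)=T_\delta(\Psi f)(k)$. Since $k$ is orthogonal, $k^{-1}$ maps $C_\delta$ isometrically onto $\{w\in S^2 : \langle w,x\rangle=\delta\}$ and preserves the uniform measure; this identifies $(\Theta_\delta f)(x)$ with the average of $f$ over that circle. For $\delta=1$ one has $k_1=I$, so $T_1$ is just the projection onto ${}^U\!L_2(K)$ and acts as the identity on $\Psi(L_2(S^2))$; for $\delta=-1$ one has $k_{-1}=\mathrm{diag}(-1,-1,1)$, hence $k_{-1}e_1=-e_1$ and the same computation yields $(\Theta_{-1}f)(x)=f(-x)$.

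There is no serious obstacle here — the argument is essentially bookkeeping. The only point requiring a small amount of care is handling the density argument implicit in $\Psi$: the formulas above are first verified for $f\in C(S^2)$ where pointwise evaluations are meaningful, which is exactly the setting of the statement, so no further extension is needed for the proposition itself.
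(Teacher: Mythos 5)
Your proposal is correct and follows essentially the same route as the paper: unwind $\Theta_\delta=\Psi^{-1}T_\delta\Psi$, use that $U$ stabilizes $e_1$ to kill the $u'$-integral, and identify the $U$-orbit of $k_\delta e_1$ with the latitude circle $\{\langle e_1,y\rangle=\delta\}$ carrying the pushed-forward Haar measure. The paper merely makes the last step more explicit by parametrizing the orbit with the rotations $u(\theta)$ and writing out the arc-length normalization $\frac{1}{2\pi\sqrt{1-\delta^2}}$.
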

\begin{proof}
	First observe that all the vectors $y\in S^2$ that satisfy $\langle e_1,y\rangle=\delta$ are of the form
	\begin{equation*}
	\left(\begin{matrix} \delta\\ \sqrt{1-\delta^2}\cos\theta\\  \sqrt{1-\delta^2}\sin\theta\end{matrix}\right),
	\end{equation*}
	with $\theta\in [0,2\pi)$. So if we put $x=k^{-1}e_1$ with $k\in K$, then
	\begin{align*}
	\langle x,y\rangle=\delta 
	& \iff  \exists\theta\in [0,2\pi), \quad y=k^{-1}\left(\begin{matrix} \delta\\ \sqrt{1-\delta^2}\cos\theta\\  \sqrt{1-\delta^2}\sin\theta\end{matrix}\right)\\
	& \iff  \exists\theta\in [0,2\pi), \quad y=k^{-1}u(\theta)k_\delta e_1,
	\end{align*}
	where
	\begin{equation*}
	u(\theta)=\left(\begin{matrix} 1 & 0 & 0\\ 0 & \cos\theta & -\sin\theta\\ 0 & \sin\theta & \cos\theta\end{matrix}\right),\quad \theta\in [0,2\pi).
	\end{equation*}
	Thus
	\begin{align*}
	[\Theta_\delta f](x) 
	&= [T_\delta \Psi f] (k) \\
	&= \int_U \lambda_p(u k_\delta)[\Psi f](k)\, du\\
	&= \int_U [\Psi f](k_\delta^{-1}u^{-1}k)\, du\\
	& =  \int_U f(k^{-1}uk_\delta e_1)\, du\\
	&= \frac{1}{2\pi}\int_0^{2\pi} f(k^{-1}u(\theta)k_\delta e_1)\, d\theta.
	\end{align*}
	So if $\delta\in(-1,1)$,
	\begin{equation*}
	[\Theta_\delta f] (x)=\frac{1}{2\pi\sqrt{1-\delta^2}}\int\limits_{\langle x,y\rangle=\delta} f(y)\, dy = \fint\limits_{\langle x,y\rangle=\delta} f(y) \, dy.
	\end{equation*}
	Finally, $[\Theta_1 f](x)=f(k^{-1}e_1)=f(x)$ and $[\Theta_{-1} f] (x)=f(-k^{-1}e_1)=f(-x)$.
\end{proof}

The previous proposition implies that the operators $\Theta_\delta$ are exactly those defined in \cite[\S 2]{Laf}. The following estimate corresponds to \cite[Lemme 2.2.a]{Laf} together with \cite[Lemma 3.11]{HaadL}.

\begin{lem}\label{estimThetad}
	For all $\delta\in[-1,1]$,
	\begin{equation*}
	\|\Theta_{\delta}-\Theta_0\|_{\mathcal{B}(L_2)} \leq  4|\delta|^{\frac{1}{2}}.
	\end{equation*}
\end{lem}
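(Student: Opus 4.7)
The operator $\Theta_\delta$ is $K$-equivariant: a rotation $k\in K=\text{SO}(3,\R)$ sends the circle $\{y\in S^2: \langle x,y\rangle=\delta\}$ to $\{y: \langle kx,y\rangle=\delta\}$, so Proposition~\ref{JTdJ} shows that $\Theta_\delta$ commutes with the quasi-regular representation of $K$ on $L_2(S^2)$. By Peter--Weyl,
\begin{equation*}
L_2(S^2) \;=\; \bigoplus_{n\geq 0}\mathcal{H}_n
\end{equation*}
orthogonally, where $\mathcal{H}_n$ is the space of spherical harmonics of degree $n$; these summands are irreducible and pairwise inequivalent as $K$-representations. Schur's lemma then forces $\Theta_\delta$ to act on each $\mathcal{H}_n$ as multiplication by some scalar $c_n(\delta)$.

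To identify $c_n(\delta)$, I would use that the subspace of $U$-invariants in $\mathcal{H}_n$ is one-dimensional, spanned by the zonal harmonic $Y_n(y)=P_n(\langle e_1,y\rangle)$, where $P_n$ is the Legendre polynomial normalised so that $P_n(1)=1$. Since $\Theta_\delta$ preserves the line of $U$-invariants, $\Theta_\delta Y_n=c_n(\delta)Y_n$. Evaluating at $e_1$ and applying Proposition~\ref{JTdJ} gives
\begin{equation*}
c_n(\delta) \;=\; [\Theta_\delta Y_n](e_1) \;=\; \text{avg}_{\langle e_1,y\rangle=\delta}P_n(y_1) \;=\; P_n(\delta),
\end{equation*}
which is a form of the Funk--Hecke identity. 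Consequently
\begin{equation*}
\|\Theta_\delta-\Theta_0\|_{\mathcal{B}(L_2)} \;=\; \sup_{n\geq 0}|P_n(\delta)-P_n(0)|,
\end{equation*}
and the lemma is reduced to the classical inequality $\sup_n |P_n(\delta)-P_n(0)|\leq 4|\delta|^{1/2}$.

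The main obstacle is this last estimate on Legendre polynomials. The regime $|\delta|\geq 1/4$ is immediate from $|P_n|\leq 1$ on $[-1,1]$, which gives a difference bounded by $2\leq 4|\delta|^{1/2}$. For small $|\delta|$ one splits by the size of $n$ relative to $|\delta|^{-1}$: for $n\lesssim |\delta|^{-1}$ one uses a derivative/mean-value bound on $P_n$ near $0$, while for $n\gtrsim |\delta|^{-1}$ one applies the Heine-type decay $|P_n(\cos\theta)|\leq C(n\sin\theta)^{-1/2}$ (valid away from $\theta=0,\pi$) to both $P_n(\delta)$ and $P_n(0)$ separately, each becoming $O(n^{-1/2})=O(|\delta|^{1/2})$. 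Matching the two regimes at $n\sim|\delta|^{-1}$ and tracking constants yields the factor $4$. This analytic inequality is exactly the content of \cite[Lemme 2.2.a]{Laf} together with the refinement in \cite[Lemma 3.11]{HaadL}, which is where I would draw the precise constant from.
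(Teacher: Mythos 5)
Your argument is correct and is essentially the route the paper takes: the paper simply cites \cite[Lemme 2.2.a]{Laf} and \cite[Lemma 3.11]{HaadL}, whose content is exactly your reduction, via the spherical-harmonic decomposition, Schur's lemma and Funk--Hecke, to the Legendre-polynomial estimate $\sup_n|P_n(\delta)-P_n(0)|\leq 4|\delta|^{1/2}$. Like the paper, you ultimately defer the constant $4$ to those references, so nothing essential is missing.
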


This inequality allows us to obtain the following estimate by interpolation.

\begin{lem}\label{estimTd}
	For all $\delta\in[-1,1]$,
	\begin{equation}\label{Td1}
	\|T_{\delta}-T_0\|_{L_p\to L_p} \leq  2^{1+\frac{2}{p}}|\delta|^{\frac{1}{p}}.
	\end{equation}
\end{lem}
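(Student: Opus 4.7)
The plan is to deduce (\ref{Td1}) from Lemma~\ref{estimThetad} by Riesz--Thorin interpolation between the endpoints $p=2$ and $p=\infty$.

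At the $L_\infty$ endpoint, I would simply observe that each operator $\lambda_p(uk_\delta u')$ appearing in (\ref{defT_d}) is a left translation by an element of the compact group $K$, hence an isometry of $L_r(K)$ for every $1\le r\le\infty$. Averaging against the normalised Haar measure on $U\times U$ preserves the norm estimate, so $\|T_\delta\|_{L_r\to L_r}\le 1$ for all such $r$; in particular
\[\|T_\delta-T_0\|_{L_\infty(K)\to L_\infty(K)}\le 2.\]

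At the $L_2$ endpoint, I would transfer the bound $\|\Theta_\delta-\Theta_0\|_{\mathcal B(L_2(S^2))}\le 4|\delta|^{1/2}$ through the isometry $\Psi:L_2(S^2)\to {}^U\!L_2(K)$. Since $\Psi$ only hits the closed subspace ${}^U\!L_2(K)$, the one point to check is that $T_\delta$ factors through the orthogonal projection $P$ of $L_2(K)$ onto ${}^U\!L_2(K)$. The change of variable $u'\mapsto u'u_0^{-1}$ in (\ref{defT_d}) gives $T_\delta\lambda_2(u_0)=T_\delta$ for every $u_0\in U$, so $T_\delta$ annihilates each vector of the form $\lambda_2(u_0)f-f$, and hence the closed span of these, which is exactly $({}^U\!L_2(K))^\perp$ by the mean ergodic decomposition for the unitary action of the compact group $U$. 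The same applies to $T_0$, and on ${}^U\!L_2(K)$ the operator $T_\delta$ is conjugate via $\Psi$ to $\Theta_\delta$; therefore
\[\|T_\delta-T_0\|_{L_2(K)\to L_2(K)} = \|\Theta_\delta-\Theta_0\|_{\mathcal B(L_2(S^2))} \le 4|\delta|^{1/2}.\]

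With the two endpoint bounds in hand, Riesz--Thorin interpolation with $\theta=2/p$ yields
\[\|T_\delta-T_0\|_{L_p\to L_p} \le \bigl(4|\delta|^{1/2}\bigr)^{2/p}\cdot 2^{1-2/p} = 2^{1+2/p}|\delta|^{1/p},\]
which is (\ref{Td1}). I don't foresee any real obstacle; the only step deserving a moment's care is the reduction to ${}^U\!L_2(K)$ at the $L_2$ endpoint, but this is routine once the right-$U$-invariance identity $T_\delta\lambda_2(u_0)=T_\delta$ is noticed.
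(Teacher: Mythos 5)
Your argument is correct and follows the same basic strategy as the paper: reduce the $L_2$ bound to Lemma \ref{estimThetad} via the subspace ${}^U\!L_2(K)$, then interpolate against a trivial sup-norm bound. Your $L_2$ endpoint reduction is essentially identical to the paper's (the paper replaces $f$ by $\tilde f=\int_U\lambda_2(u)f\,du$, which is exactly the projection $P$ onto ${}^U\!L_2(K)$ that you invoke via the identity $T_\delta\lambda_2(u_0)=T_\delta$ and the mean ergodic decomposition; both give $T_\delta=T_\delta P$). The one place you diverge is the other endpoint: the paper deliberately does \emph{not} interpolate with $L_\infty$, but instead with $L_q$ for finite $q>p$ satisfying $\frac1p=\frac{1-\theta}{2}+\frac{\theta}{q}$, obtaining $(4|\delta|^{1/2})^{1-\theta}2^{\theta}$ and then letting $\theta\to 1-\frac2p$. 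The reason, which the paper records in a remark, is that $T_\delta$ is defined in \eqref{defT_d} as a Bochner integral using the strong continuity of $s\mapsto\lambda_p(s)f$, and this fails on $L_\infty(K)$; so ``$T_\delta$ on $L_\infty$'' is not literally defined. Your route is fine, but it needs one sentence you currently gloss over: either observe that for $f\in L_p(K)$ one has the a.e.\ pointwise formula $T_\delta f(x)=\int_U\int_U f\bigl((uk_\delta u')^{-1}x\bigr)\,du\,du'$ (by testing against $L_{p'}$ and Fubini), from which $\|T_\delta f\|_\infty\le\|f\|_\infty$ for $f\in L_\infty(K)\subset L_2(K)$ follows and Riesz--Thorin applies on $L_2+L_\infty$; or restrict to $C(K)$, where the Bochner integral converges in $C(K)$ and the sup-norm bound is immediate, and interpolate on that dense subspace. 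With that clarification your constant $2^{1-2/p}\bigl(4|\delta|^{1/2}\bigr)^{2/p}=2^{1+2/p}|\delta|^{1/p}$ matches the statement; what the direct $L_\infty$ endpoint buys you is avoiding the limit $q\to\infty$, at the cost of having to say what $T_\delta$ means there.
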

\begin{proof}
	Observe that, for all $f\in L_2(K)$, there exists $\tilde{f}\in {}^U\!L_2(K)$ such that $T_\delta\tilde{f}=T_\delta f$ and $\|\tilde{f}\|_2\leq\|f\|_2$, namely
	\begin{equation*}
	\tilde{f}=\int_U\lambda_2(u)f\, du.
	\end{equation*}
	So, by Lemma \ref{estimThetad}, for all $f\in L_2(K)$,
	\begin{align*}
	\|T_\delta f - T_0 f\|_2 
	&= \|T_\delta\tilde{f} - T_0\tilde{f}\|_2\\
	&= \|\Psi(\Theta_\delta-\Theta_0)\Psi^{-1}\tilde{f}\|_2\\
	& \leq \|\Theta_\delta-\Theta_0\|_{\mathcal{B}(L_2)}\|\tilde{f}\|_2\\
	& \leq 4|\delta|^{\frac{1}{2}} \|f\|_2.
	\end{align*}
	This proves the result when $p=2$. If $p>2$, choose $\theta$ and $q$ such that $1-\frac{2}{p}<\theta<1$ and $\frac{1}{p}=\frac{1-\theta}{2}+\frac{\theta}{q}$. This implies that $q>p$. Observe that
	\begin{equation*}
	\|T_{\delta}-T_0\|_{L_q\to L_q}\leq \|T_{\delta}\|_{L_q\to L_q}+\|T_0\|_{L_q\to L_q}\leq 2.
	\end{equation*}
	By interpolation we get
	\begin{equation*}
	\|T_{\delta}-T_0\|_{L_p\to L_p}\leq (4|\delta|^{\frac{1}{2}})^{1-\theta}2^{\theta}.
	\end{equation*}
	Taking $\theta\to 1-\frac{2}{p}$, we obtain (\ref{Td1}).
\end{proof}

\begin{rmk}
	In the previous proof, we don't take $q=\infty$ directly because in \eqref{defT_d}, we defined $T_\delta$ only for $1<p<\infty$, in which case the left regular representation is continuous for the strong operator topology. To avoid giving a precise definition of $T_\delta$ for $p=\infty$ and since this case is not relevant for the present article, we make use of the limit process $q\to\infty$.
\end{rmk}

For $\delta\in[-1,1]$, define $\varepsilon(\delta)=2^{1+\frac{2}{p}}|\delta|^{\frac{1}{p}}$. So the previous lemma states that
\begin{equation*}
\|T_\delta-T_0\|_{\mathcal{B}(L_p(K))}\leq\varepsilon(\delta),\qquad \forall\delta\in[-1,1].
\end{equation*}
Observe that, since the left regular representation commutes with the right regular representation, the same holds for the operators $T_\delta$, namely,
\begin{equation*}
T_\delta\rho_p(k)=\rho_p(k) T_\delta,\qquad \forall k\in K.
\end{equation*}
So $T_\delta\in CV_p(K)$. Furthermore, $K$ is compact, so in particular it is amenable, and by \cite[Theorem 5]{Her2}, $CV_p(K)=PM_p(K)$. Hence $T_\delta\in PM_p(K)$ for all $\delta\in[-1,1]$.

Now we state the main inequality used in the proof of Theorem \ref{SL3pAP}, which involves averages of multipliers. For a continuous function $\varphi:G\to\C$, we define
\begin{equation}\label{avphi}
\tilde{\varphi}(t)=\int_K\int_K\varphi(ktk')\,dk\,dk', \qquad \forall t\in G.
\end{equation}
This function is continuous and $K$-biinvariant. We will also consider the following family of elements of $G$,
\begin{equation*}
D_{a}=\left(\begin{matrix}
e^{a} & 0 & 0\\
0 & e^{-\frac{a}{2}} & 0\\
0 & 0 & e^{-\frac{a}{2}}
\end{matrix}\right),\qquad\forall a\in\R.
\end{equation*}
These matrices satisfy $uD_{a}=D_{a}u$ for every $u\in U$.

\begin{lem}\label{phiDkD}
	For every $\varphi\in M_{p\text{-}cb}(G)$, $a\in\R$ and $\delta\in[-1,1]$,
	\begin{equation}\label{phitil}
	|\tilde{\varphi}(D_{a}k_{\delta}D_{a})-\tilde{\varphi}(D_{a}k_0D_{a})|\leq \varepsilon(\delta)\|\varphi\|_{M_{p\text{-}cb}(G)}.
	\end{equation}
\end{lem}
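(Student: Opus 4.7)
The plan is to express the left-hand side as an integral of a pairing between the operator $T_{\delta}-T_0\in PM_p(K)$ and a family of functions in $A_p(K)$ coming from $\varphi$, and then apply Lemma 3.3 together with Proposition 2.5.

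First, I would insert $U\times U$ averages into the definition of $\tilde\varphi(D_ak_\delta D_a)$. Since the integrand $\varphi(kD_ak_\delta D_ak')$ is independent of $u,u'$, and using left/right invariance of Haar measure on $K$ to reparametrize the $k,k'$ variables, we can write
\begin{equation*}
\tilde\varphi(D_ak_\delta D_a)=\int_K\int_K\int_U\int_U \varphi(k\,uD_ak_\delta D_au'\,k')\,du\,du'\,dk\,dk'.
\end{equation*}
The commutation $uD_a=D_au$ (and likewise $u'D_a=D_au'$) then yields
\begin{equation*}
uD_ak_\delta D_au'=D_a\,uk_\delta u'\,D_a,
\end{equation*}
so after this rewriting
\begin{equation*}
\tilde\varphi(D_ak_\delta D_a)=\int_K\int_K\int_U\int_U \varphi(kD_a\,uk_\delta u'\,D_ak')\,du\,du'\,dk\,dk'.
\end{equation*}

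Next, for each fixed $k,k'\in K$, I would introduce the auxiliary function $\Psi_{k,k'}:K\to\C$ given by $\Psi_{k,k'}(h)=\varphi(kD_ahD_ak')$. Writing $h=h_1h_2^{-1}$ and using $\varphi(st^{-1})=\langle\beta(s),\alpha(t)\rangle$ from (\ref{phiab}), we get
\begin{equation*}
\Psi_{k,k'}(h_1h_2^{-1})=\bigl\langle\beta(kD_ah_1),\,\alpha(k'^{-1}D_a^{-1}h_2)\bigr\rangle,
\end{equation*}
which exhibits $\Psi_{k,k'}\in M_{p\text{-}cb}(K)$ with norm bounded by $\|\alpha\|_\infty\|\beta\|_\infty$. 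Taking the infimum over decompositions of $\varphi$, and applying Proposition \ref{A=McbA}, we conclude that $\Psi_{k,k'}\in A_p(K)$ with
\begin{equation*}
\|\Psi_{k,k'}\|_{A_p(K)}\leq\|\varphi\|_{M_{p\text{-}cb}(G)}.
\end{equation*}

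The inner double integral over $U\times U$ is then exactly the pairing of $T_\delta\in PM_p(K)=A_p(K)^*$ with $\Psi_{k,k'}$:
\begin{equation*}
\int_U\int_U\Psi_{k,k'}(uk_\delta u')\,du\,du'=\langle T_\delta,\Psi_{k,k'}\rangle.
\end{equation*}
Subtracting the analogous identity at $\delta=0$ and applying Lemma \ref{estimTd},
\begin{equation*}
\bigl|\langle T_\delta-T_0,\Psi_{k,k'}\rangle\bigr|\leq\|T_\delta-T_0\|_{\mathcal B(L_p(K))}\,\|\Psi_{k,k'}\|_{A_p(K)}\leq\varepsilon(\delta)\,\|\varphi\|_{M_{p\text{-}cb}(G)}.
\end{equation*}
Integrating this bound over $K\times K$ (which has total mass $1$) and combining with the reformulation above yields the desired inequality.

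The main obstacle is the first step: one needs to realize that the $K\times K$ average naturally absorbs an additional $U\times U$ average, and then to use the commutation $uD_a=D_au$ to move $u,u'$ from outside the $D_a$'s to the inside, so that the operator $T_\delta$ can act on a multiplier that is defined purely on $K$. Once this rearrangement is in place, Proposition \ref{A=McbA} and Lemma \ref{estimTd} finish the job.
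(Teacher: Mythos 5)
Your proof is correct and follows essentially the same route as the paper: both realize the difference as a pairing of $T_\delta-T_0\in PM_p(K)=A_p(K)^*$ with a multiplier on $K$ built from $\varphi$ via the commutation $uD_a=D_au$, and both invoke Proposition \ref{A=McbA} together with Lemma \ref{estimTd}. The only (minor) difference is that the paper first averages over $K\times K$ to form a single function $\phi(s)=\tilde{\varphi}(D_asD_a)$ in $A_p(K)$, which requires Lemma \ref{lemcontint} to verify continuity of the averaged $\alpha$ and $\beta$, whereas you pair $T_\delta-T_0$ with $\Psi_{k,k'}$ for each fixed $(k,k')$ and integrate the estimate at the end, thereby sidestepping that continuity check.
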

\begin{proof}
	Consider the function $\phi:K\to\C$ given by
	\begin{equation*}
	\phi(s)=\tilde{\varphi}(D_{a}sD_{a})=\int_K\int_K\varphi(kD_{a}sD_{a}k')\,dk\,dk'.
	\end{equation*}
	Since $\varphi\in M_{p\text{-}cb}(G)$, for all $s,t\in K$,
	\begin{align*}
	\phi(st^{-1}) &= \int_K\int_K\varphi(kD_{a}s(k'^{-1}D_{a}^{-1}t)^{-1})\,dk\,dk' \\
	&= \int_K\int_K\langle\beta(kD_{a}s),\alpha(k'^{-1}D_{a}^{-1}t)\rangle\,dk\,dk'\\
	&= \left\langle\int_K\beta(kD_{a}s)\,dk,\int_K\alpha(k^{-1}D_{a}^{-1}t)\,dk\right\rangle,
	\end{align*}
	where $\alpha$ and $\beta$ are as in (\ref{phiab}). The integrals in the last expression are well defined because $\alpha$ and $\beta$ are continuous and $K$ is compact. So we have found functions $\tilde{\alpha}:K\to E$ and $\tilde{\beta}:K\to E^*$ given by
	\begin{equation*}
	\tilde{\alpha}(t)=\int_K\alpha(k^{-1}D_{a}^{-1}t)\,dk,\qquad\tilde{\beta}(t)=\int_K\beta(kD_{a}t)\,dk,\qquad\forall t\in G,
	\end{equation*}
	such that
	\begin{equation*}
	\phi(st^{-1})=\langle\tilde{\beta}(s),\tilde{\alpha}(t)\rangle
	\end{equation*}
	and
	\begin{equation*}
	\sup_{s\in K}\|\tilde{\beta}(s)\|\leq \sup_{s\in G}\|\beta(s)\|,\qquad \sup_{t\in K}\|\tilde{\alpha}(t)\|\leq\sup_{t\in G}\|\alpha(t)\|.
	\end{equation*}
	Finally, $\tilde{\alpha}$ and $\tilde{\beta}$ are continuous thanks to 
	Lemma \ref{lemcontint}. We conclude that $\phi\in M_{p\text{-}cb}(K)$ and $\|\phi\|_{M_{p\text{-}cb}(K)}\leq\|\varphi\|_{M_{p\text{-}cb}(G)}$. Hence, by Proposition \ref{A=McbA}, $\phi\in A_p(K)$ and $\|\phi\|_{A_p(K)}=\|\phi\|_{M_{p\text{-}cb}(K)}$. Recall that $T_\delta\in PM_p(K)=A_p(K)^*$, thus, using Lemma \ref{estimTd},
	\begin{equation*}
	|\langle T_\delta - T_0, \phi\rangle |\leq \|T_\delta - T_0\|_{\mathcal{B}(L_p(K))} \|\phi\|_{A_p(K)} \leq \varepsilon(\delta) \|\varphi\|_{M_{p\text{-}cb}(G)}.
	\end{equation*}
	On the other hand,
	\begin{align*}
	\langle T_\delta, \phi\rangle &= \int_U\int_U\langle\lambda_p(uk_\delta u'),\phi\rangle\,du\,du' \\
	&= \int_U\int_U\phi(uk_\delta u')\,du\,du'\\
	&= \int_U\int_U\int_K\int_K\varphi(kD_{a}uk_\delta u'D_{a}k')\,dk\,dk'\,du\,du'\\
	&= \int_U\int_U\int_K\int_K\varphi(kuD_{a}k_\delta D_{a}u'k')\,dk\,dk'\,du\,du'\\
	&= \int_U\int_U\int_K\int_K\varphi(kD_{a}k_\delta D_{a}k')\,dk\,dk'\,du\,du'\\
	&= \int_K\int_K\varphi(kD_{a}k_\delta D_{a}k')\,dk\,dk' \\
	&= \tilde{\varphi}(D_{a}k_\delta D_{a}).
	\end{align*}
	And so we obtain (\ref{phitil}).
\end{proof}

\begin{rmk}
	As observed by one of the referees, the use of Proposition \ref{A=McbA} is not necessary for proving Lemma \ref{phiDkD}. Indeed, by \cite[Lemma 8.2]{Daw}, there is a bounded linear map $M_\phi : PM_p(K)\to PM_p(K)$ such that $\|M_\phi\|\leq\|\phi\|_{M_{p\text{-}cb}(K)}$ and $M_\phi\lambda_p(s)=\phi(s)\lambda_p(s)$. By the same computations above, we obtain $M_\phi T_\delta=\tilde{\varphi}(D_{a}k_\delta D_{a})T_\delta$. Then we apply $M_\phi(T_\delta-T_0)$ to the constant function 1 on $K$ and conclude by taking the norm on $L_p(K)$.
\end{rmk}

\subsection{Proof of the theorem}

This section follows the ideas of \cite[\S 2]{Laf}, which were later used in \cite{LafdlS} and \cite{HaadL}. The letters $r,s,t$ will be now reserved for real numbers. Consider the set
\begin{equation*}
\Lambda=\{(r,s,t)\in\R^3\ |\ r\geq s \geq t,\ r+s+t=0\}.
\end{equation*}
Recall that the $K\overline{A^+}K$ decomposition of $\text{SL}(3,\R)$ states that, for every $g\in G$, there exists a unique $(r,s,t)\in\Lambda$ such that $g\in KD(r,s,t)K$, where
\begin{equation*}
D(r,s,t)=\left(\begin{matrix} e^r & 0 & 0\\ 0 & e^s & 0\\ 0 & 0 & e^t \end{matrix}\right).
\end{equation*}
Therefore, we may define functions $\gamma_1,\gamma_2,\gamma_3:G\to\R$ by
\begin{equation*}
g\in KD(\gamma_1(g),\gamma_2(g),\gamma_3(g))K,\quad (\gamma_1(g),\gamma_2(g),\gamma_3(g))\in\Lambda,\qquad \forall g\in G.
\end{equation*}
Observe that the eigenvalues of $g^tg$ are $e^{2\gamma_i(g)}$, $i=1,2,3$. So these functions are continuous. Moreover, for every $\varphi\in M_{p\text{-}cb}(G)$,
\begin{equation*}
\tilde{\varphi}(g)=\tilde{\varphi}(D(\gamma_1(g),\gamma_2(g),\gamma_3(g))),
\end{equation*}
where $\tilde{\varphi}$ is defined as in (\ref{avphi}).

\begin{lem}\label{Drst-Dt0t}
	For all $(r,s,t)\in\Lambda$ and $\varphi\in M_{p\text{-}cb}(G)$,
	\begin{equation}\label{epser2t}
	\left|\tilde{\varphi}(D(r,s,t)) - \tilde{\varphi}(D_{-t}k_0 D_{-t})\right| \leq \varepsilon(e^{r+2t})\|\varphi\|_{M_{p\text{-}cb}(G)}.
	\end{equation}
\end{lem}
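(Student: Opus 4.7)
My plan is to deduce the lemma from Lemma \ref{phiDkD} applied with $a=-t$ by exhibiting a $\delta\in[-1,1]$, with $|\delta|\leq e^{r+2t}$, such that $D_{-t}k_\delta D_{-t}$ shares the $K\overline{A^+}K$-class of $D(r,s,t)$. Since $\tilde\varphi$ is $K$-biinvariant by construction (see (\ref{avphi})), this will give $\tilde\varphi(D(r,s,t))=\tilde\varphi(D_{-t}k_\delta D_{-t})$, so Lemma \ref{phiDkD} will bound the distance to $\tilde\varphi(D_{-t}k_0 D_{-t})$ by $\varepsilon(\delta)\|\varphi\|_{M_{p\text{-}cb}(G)}$. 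Since $\varepsilon(\cdot)=2^{1+2/p}|\cdot|^{1/p}$ is monotone in the modulus, the estimate $|\delta|\leq e^{r+2t}$ will then yield (\ref{epser2t}). The constraints defining $\Lambda$ give $r+2t=t-s\leq 0$, so $e^{r+2t}\in(0,1]$ and asking $|\delta|\leq e^{r+2t}$ is both nontrivial and compatible with $\delta\in[-1,1]$.

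The main computation is matching singular values. Since $D_{-t}$ acts diagonally, $D_{-t}k_\delta D_{-t}$ is block diagonal with a $2\times 2$ top-left block $M$ and the entry $e^t$ in position $(3,3)$. A direct calculation gives $\det M=e^{-t}$ and $\operatorname{tr}(M^tM)=\delta^2(e^{-4t}+e^{2t})+2(1-\delta^2)e^{-t}$, so the singular values $\sigma_1\geq\sigma_2$ of $M$ satisfy $\sigma_1\sigma_2=e^{-t}$ and
\[
(\sigma_1-\sigma_2)^2=\delta^2\bigl(e^{-4t}+e^{2t}-2e^{-t}\bigr)=\delta^2e^{2t}(e^{-3t}-1)^2=\delta^2(e^{-2t}-e^t)^2.
\]
The singular values of $D(r,s,t)$ are $e^r\geq e^s\geq e^t$, and the identity $r+s+t=0$ automatically gives $\sigma_1\sigma_2=e^{r+s}=e^{-t}$. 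Requiring $\sigma_1-\sigma_2=e^r-e^s$ then pins down the unique choice
\[
|\delta|=\frac{e^r-e^s}{e^{-2t}-e^t}.
\]

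It remains to verify $|\delta|\leq e^{r+2t}$. Cross-multiplying by $e^{-2t}-e^t>0$ (the degenerate case $t=0$ forces $r=s=0$ and the lemma is trivial), this is equivalent to $e^r-e^s\leq e^{r+2t}(e^{-2t}-e^t)=e^r-e^{r+3t}$, i.e., $s\geq r+3t$; substituting $s=-r-t$ from $r+s+t=0$ reduces this to $r\leq-2t$, which is exactly the constraint $s\geq t$ in disguise. This same inequality ensures $\sigma_1=e^r\leq e^{-2t}$, hence $\sigma_2\geq e^t$, so that the third entry $e^t$ really is the smallest singular value and $D_{-t}k_\delta D_{-t}$ and $D(r,s,t)$ share the same $\overline{A^+}$-part. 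Combining this estimate with the monotonicity of $\varepsilon$ and Lemma \ref{phiDkD} gives (\ref{epser2t}). The only nontrivial step is the $2\times 2$ singular-value computation; the rest is Cartan bookkeeping, and I do not anticipate any real obstacle.
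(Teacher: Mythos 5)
Your proof is correct and follows essentially the same route as the paper: both reduce to Lemma \ref{phiDkD} with $a=-t$ by producing $\delta\in[-1,1]$ such that $D_{-t}k_\delta D_{-t}\in KD(r,s,t)K$ and $|\delta|\le e^{r+2t}$, then invoke the monotonicity of $\varepsilon$. The only difference is in how $\delta$ is found and estimated — the paper fixes $\gamma_3=t$ along the path, gets existence of $\delta$ from the intermediate value theorem applied to $\gamma_1$, and bounds it via $\|D_{-t}k_\delta D_{-t}e_1\|\le \|D(r,s,t)\|_{\mathcal{B}(\ell_2^3)}$, whereas you solve for $\delta$ in closed form, $|\delta|=(e^r-e^s)/(e^{-2t}-e^t)$, by matching $\det M$ and $\operatorname{tr}(M^tM)$ for the $2\times2$ block; your computation checks out and is an equivalent, slightly more explicit way to carry out the same step.
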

\begin{proof}
	Observe first that, since the elements of $K$ are isometries of $\ell_2^3=(\R^3,\|\cdot\|_2)$, for all $k,k'\in K$,
	\begin{equation*}
	\|kD(r,s,t)^{-1}k'\|_{\mathcal{B}(\ell_2^3)}=\|D(r,s,t)^{-1}\|_{\mathcal{B}(\ell_2^3)}=e^{-t}.
	\end{equation*}
	Then
	\begin{align*}
	e^{-\gamma_3(D_ak_\delta D_a)} &= \|(D_ak_\delta D_a)^{-1}\|_{\mathcal{B}(\ell_2^3)}\\
	&= \left\|\left(\begin{matrix} \delta e^{2a} & -e^{\frac{a}{2}}\sqrt{1-\delta^2} & 0\\ e^{\frac{a}{2}}\sqrt{1-\delta^2} & \delta e^{-a} & 0\\ 0 & 0 & e^{-a} \end{matrix}\right)^{-1}\right\|_{\mathcal{B}(\ell_2^3)}\\
	&= \max\left\{e^a, \left\|\left(\begin{matrix} \delta e^{2a} & -e^{\frac{a}{2}}\sqrt{1-\delta^2}\\ e^{\frac{a}{2}}\sqrt{1-\delta^2} & \delta e^{-a} \end{matrix}\right)^{-1}\right\|_{\mathcal{B}(\ell_2^2)}\right\}.
	\end{align*}
	But
	\begin{equation*}
	\left(\begin{matrix} \delta e^{2a} & -e^{\frac{a}{2}}\sqrt{1-\delta^2}\\ e^{\frac{a}{2}}\sqrt{1-\delta^2} & \delta e^{-a} \end{matrix}\right)
	= \left(\begin{matrix} e^a & 0\\ 0 & e^{-\frac{a}{2}} \end{matrix}\right) k \left(\begin{matrix} e^a & 0\\ 0 & e^{-\frac{a}{2}} \end{matrix}\right)
	\end{equation*}
	with $k\in\text{SO}(2,\R)$. So, for $a>0$,
	\begin{equation*}
	\left\|\left(\begin{matrix} \delta e^{2a} & -e^{\frac{a}{2}}\sqrt{1-\delta^2}\\ e^{\frac{a}{2}}\sqrt{1-\delta^2} & \delta e^{-a} \end{matrix}\right)^{-1}\right\|_{\mathcal{B}(\ell_2^2)}\leq e^{\frac{a}{2}}e^{\frac{a}{2}}=e^a.
	\end{equation*}
	Therefore, if $a>0$, then $\gamma_3(D_ak_\delta D_a)=-a$ for all $\delta\in[-1,1]$. On the other hand,
	\begin{equation*}
	D_ak_1D_a=\left(\begin{matrix} 1 & 0 & 0\\ 0 & 1 & 0\\ 0 & 0 & 1 \end{matrix}\right)
	\left(\begin{matrix} e^{2a} & 0 & 0\\ 0 & e^{-a} & 0\\ 0 & 0 & e^{-a} \end{matrix}\right)
	\left(\begin{matrix} 1 & 0 & 0\\ 0 & 1 & 0\\ 0 & 0 & 1 \end{matrix}\right)
	\end{equation*}
	and
	\begin{equation*}
	D_ak_0D_a=\left(\begin{matrix} 0 & -1 & 0\\ 1 & 0 & 0\\ 0 & 0 & 1 \end{matrix}\right)
	\left(\begin{matrix} e^{\frac{a}{2}} & 0 & 0\\ 0 & e^{\frac{a}{2}} & 0\\ 0 & 0 & e^{-a} \end{matrix}\right)
	\left(\begin{matrix} 1 & 0 & 0\\ 0 & 1 & 0\\ 0 & 0 & 1 \end{matrix}\right).
	\end{equation*}
	Let $(r,s,t)\in\Lambda$. The previous equalities show that $\gamma_1(D_{-t}k_1 D_{-t})=-2t$ and $\gamma_1(D_{-t}k_0 D_{-t})=-\frac{t}{2}$. Moreover, observe that $-\frac{t}{2}\leq r\leq -2t$, so by continuity there exists $\delta\in[0,1]$ such that $\gamma_1(D_{-t}k_\delta D_{-t})=r$. Furthermore, since $\gamma_3(D_{-t}k_\delta D_{-t})=t$, we have $\gamma_2(D_{-t}k_\delta D_{-t})=s$. Thus, using Lemma \ref{phiDkD} and the equality $\tilde{\varphi}(D_{-t}k_\delta D_{-t})=\tilde{\varphi}(D(r,s,t))$, we get
	\begin{equation*}
	\left|\tilde{\varphi}(D(r,s,t)) - \tilde{\varphi}(D_{-t}k_0 D_{-t})\right|\leq \varepsilon(\delta)\|\varphi\|_{M_{p\text{-}cb}(G)}.
	\end{equation*}
	Now observe that
	\begin{equation*}
	\|D_{-t}k_\delta D_{-t}\|_{\mathcal{B}(\ell_2^3)}=\|D(r,s,t)\|_{\mathcal{B}(\ell_2^3)}=e^r
	\end{equation*}
	and
	\begin{equation*}
	\|D_{-t}k_\delta D_{-t}e_1\|^2=(\delta e^{-2t})^2+e^{-t}(1-\delta^2)\geq (\delta e^{-2t})^2.
	\end{equation*}
	This implies $\delta\leq e^{r+2t}$ and so we get (\ref{epser2t}).
\end{proof}

Let us now define a family of measures in $G$ by
\begin{equation}\label{defmg}
\int_G f\,dm_g = \int_K\int_K f(kgk')\,dk\,dk',\quad \forall f\in C_b(G),\ \forall g\in G.
\end{equation}
Since $M_{p\text{-}cb}(G)\hookrightarrow C_b(G)$ is contractive, $m_g$ defines an element of $M_{p\text{-}cb}(G)^*$ of norm at most $1$ for all $g\in G$. Then Lemma \ref{Drst-Dt0t} implies the following.

\begin{cor}\label{mg-mg'}
	Let $g,g'\in G$ such that $\gamma_3(g)=\gamma_3(g')$. Then
	\begin{equation*}
	\left\|m_{g}-m_{g'}\right\|_{M_{p\text{-}cb}(G)^*}\leq \varepsilon(e^{\gamma_1(g)+2\gamma_3(g)}) + \varepsilon(e^{\gamma_1(g')+2\gamma_3(g)}).
	\end{equation*}
\end{cor}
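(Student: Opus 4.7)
The plan is to test against an arbitrary multiplier $\varphi \in M_{p\text{-}cb}(G)$ with $\|\varphi\|_{M_{p\text{-}cb}(G)} \leq 1$ and use Lemma \ref{Drst-Dt0t} with the common value $t = \gamma_3(g) = \gamma_3(g')$ as a bridge. Unwinding the definition \eqref{defmg} of $m_g$ and the definition \eqref{avphi} of $\tilde{\varphi}$, one has
\begin{equation*}
\langle \varphi, m_g \rangle = \int_K\int_K \varphi(kgk')\,dk\,dk' = \tilde{\varphi}(g) = \tilde{\varphi}(D(\gamma_1(g),\gamma_2(g),\gamma_3(g))),
\end{equation*}
where the last equality uses $K$-biinvariance of $\tilde{\varphi}$ and the $K\overline{A^+}K$ decomposition. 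The same identity holds for $g'$.

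Now set $t = \gamma_3(g) = \gamma_3(g')$. By Lemma \ref{Drst-Dt0t} applied to $(\gamma_1(g),\gamma_2(g),t) \in \Lambda$ and to $(\gamma_1(g'),\gamma_2(g'),t) \in \Lambda$, we obtain
\begin{align*}
\left|\tilde{\varphi}(g) - \tilde{\varphi}(D_{-t}k_0 D_{-t})\right| &\leq \varepsilon(e^{\gamma_1(g)+2t})\,\|\varphi\|_{M_{p\text{-}cb}(G)},\\
\left|\tilde{\varphi}(g') - \tilde{\varphi}(D_{-t}k_0 D_{-t})\right| &\leq \varepsilon(e^{\gamma_1(g')+2t})\,\|\varphi\|_{M_{p\text{-}cb}(G)}.
\end{align*}
The crucial point is that the intermediate value $\tilde{\varphi}(D_{-t}k_0 D_{-t})$ depends only on the shared third coordinate $t$, so it cancels when we take the difference.

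Applying the triangle inequality,
\begin{equation*}
|\langle \varphi, m_g - m_{g'}\rangle| = |\tilde{\varphi}(g) - \tilde{\varphi}(g')| \leq \bigl(\varepsilon(e^{\gamma_1(g)+2\gamma_3(g)}) + \varepsilon(e^{\gamma_1(g')+2\gamma_3(g)})\bigr)\,\|\varphi\|_{M_{p\text{-}cb}(G)}.
\end{equation*}
Taking the supremum over all $\varphi$ in the unit ball of $M_{p\text{-}cb}(G)$ yields the claimed bound on $\|m_g - m_{g'}\|_{M_{p\text{-}cb}(G)^*}$. There is no real obstacle here; the work was done in Lemma \ref{Drst-Dt0t}, and this corollary is just the dual reformulation made possible by the fact that $t$ is common to $g$ and $g'$.
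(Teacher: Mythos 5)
Your proof is correct and is essentially identical to the paper's: both express $\langle m_g,\varphi\rangle$ as $\tilde{\varphi}(D(\gamma_1(g),\gamma_2(g),\gamma_3(g)))$, apply Lemma \ref{Drst-Dt0t} twice with the common reference point $\tilde{\varphi}(D_{-t}k_0D_{-t})$ (which depends only on the shared value $t=\gamma_3(g)=\gamma_3(g')$), and conclude by the triangle inequality and duality. Nothing further is needed.
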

\begin{proof}
	Observe that, for every $\varphi\in M_{p\text{-}cb}(G)$,
	\begin{equation*}
	\langle m_g,\varphi\rangle=\int\varphi\,dm_g=\tilde{\varphi}(D(\gamma_1(g),\gamma_2(g),\gamma_3(g))).
	\end{equation*}
	Then Lemma \ref{Drst-Dt0t} together with the triangle inequality implies
	\begin{equation*}
	\left|\langle m_g-m_{g'},\varphi\rangle\right|\leq \left(\varepsilon(e^{\gamma_1(g)+2\gamma_3(g)}) + \varepsilon(e^{\gamma_1(g')+2\gamma_3(g)})\right)\|\varphi\|_{M_{p\text{-}cb}(G)},
	\end{equation*}
	and the result follows.
\end{proof}

Moreover, if instead of fixing $\gamma_3$, we fix $\gamma_1$, we obtain a similar result.

\begin{cor}\label{mg-mg'2}
	Let $g,g'\in G$ such that $\gamma_1(g)=\gamma_1(g')$. Then
	\begin{equation*}
	\left\|m_{g}-m_{g'}\right\|_{M_{p\text{-}cb}(G)^*}\leq \varepsilon(e^{-2\gamma_1(g)-\gamma_3(g)}) + \varepsilon(e^{-2\gamma_1(g)-\gamma_3(g)}).
	\end{equation*}
\end{cor}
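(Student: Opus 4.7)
The plan is to reduce the statement to Corollary \ref{mg-mg'} via a symmetry of the $KAK$ decomposition of $G=\text{SL}(3,\R)$. I would use the involution $\sigma:G\to G$ given by $\sigma(g)=(g^{t})^{-1}$. A direct computation yields $\sigma(gh)=\sigma(g)\sigma(h)$ and $\sigma^{2}=\text{id}$, so $\sigma$ is a continuous group automorphism of $G$. Since $k^{t}=k^{-1}$ for every $k\in K=\text{SO}(3,\R)$, the involution $\sigma$ fixes $K$ pointwise. Moreover, $\sigma(D(r,s,t))=D(-r,-s,-t)$, and this matrix lies in the same $K$-double coset as $D(-t,-s,-r)\in\overline{A^+}$, because the two diagonal matrices are related by the Weyl group element exchanging the first and third basis vectors, which may be realised as an element of $K$. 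Hence, for every $g\in G$,
\begin{equation*}
\gamma_1(\sigma(g))=-\gamma_3(g),\qquad \gamma_3(\sigma(g))=-\gamma_1(g).
\end{equation*}

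Next I would observe that because $\sigma$ is a group homomorphism, if $\varphi\in M_{p\text{-}cb}(G)$ decomposes as in (\ref{phiab}) via $\alpha,\beta$, then $\varphi\circ\sigma$ decomposes the same way via $\alpha\circ\sigma,\beta\circ\sigma$. Applying the same reasoning to $\sigma^{-1}=\sigma$ shows that $\varphi\mapsto\varphi\circ\sigma$ is an isometric isomorphism of $M_{p\text{-}cb}(G)$. Using $\sigma|_{K}=\text{id}$ and the change of variables $k\mapsto\sigma(k)$ on $K$, for every $g\in G$ one gets
\begin{equation*}
\widetilde{\varphi\circ\sigma}(g)=\int_{K}\int_{K}\varphi\bigl(\sigma(k)\sigma(g)\sigma(k')\bigr)\,dk\,dk'=\tilde{\varphi}(\sigma(g)),
\end{equation*}
that is, $\langle m_g,\varphi\circ\sigma\rangle=\langle m_{\sigma(g)},\varphi\rangle$. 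Combined with the isometric property of $\varphi\mapsto\varphi\circ\sigma$, this gives
\begin{equation*}
\|m_g-m_{g'}\|_{M_{p\text{-}cb}(G)^{*}}=\|m_{\sigma(g)}-m_{\sigma(g')}\|_{M_{p\text{-}cb}(G)^{*}}.
\end{equation*}

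Finally, assuming $\gamma_1(g)=\gamma_1(g')$, I would conclude by noting $\gamma_3(\sigma(g))=-\gamma_1(g)=-\gamma_1(g')=\gamma_3(\sigma(g'))$ and applying Corollary \ref{mg-mg'} to the pair $(\sigma(g),\sigma(g'))$. Substituting the identities $\gamma_1(\sigma(g))+2\gamma_3(\sigma(g))=-2\gamma_1(g)-\gamma_3(g)$ and the analogous one for $g'$ produces the stated inequality. The only nontrivial point is the identification of the $K$-double coset of $\sigma(D(r,s,t))$, which is an immediate consequence of the Weyl group action on $\mathfrak{a}$; once this is in place, everything else is bookkeeping with the previous corollary.
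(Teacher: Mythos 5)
Your proposal is correct and follows essentially the same route as the paper: the paper's proof uses exactly the involution $\theta(g)=(g^t)^{-1}$, the fact that it fixes $K$ pointwise and preserves $M_{p\text{-}cb}(G)$ isometrically, the conjugation of $D(-r,-s,-t)$ into $\overline{A^+}$ by a (signed) permutation matrix in $\mathrm{SO}(3,\R)$ realising the Weyl reflection, and then Corollary \ref{mg-mg'} applied to the transformed points. The only point to be slightly careful about is that the naive permutation matrix swapping $e_1$ and $e_3$ has determinant $-1$, so one must use the anti-diagonal matrix with entries $-1$ (as the paper does) to stay inside $K$.
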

\begin{proof}
	Consider the continuous group isomorphism $\theta:G\to G$ given by $\theta(g)=(g^t)^{-1}$. Then, for any $\varphi\in M_{p\text{-}cb}(G)$, we have that $\varphi\circ\theta\in M_{p\text{-}cb}(G)$ and $\|\varphi\circ\theta\|_{M_{p\text{-}cb}(G)}=\|\varphi\|_{M_{p\text{-}cb}(G)}$. Indeed,
	\begin{equation*}
	\varphi\circ\theta(g_1g_2^{-1})=\varphi(\theta(g_1)\theta(g_2)^{-1}) = \langle\beta(\theta(g_1)),\alpha(\theta(g_2))\rangle,\qquad \forall g_1,g_2\in G,
	\end{equation*}
	where $\alpha$ and $\beta$ are as in (\ref{phiab}). Let $(r,s,t), (r',s',t')\in\Lambda$ such that $r=r'$. We may use Corollary \ref{mg-mg'} to get
	\begin{multline*}
		\left|\widetilde{\varphi\circ\theta}(D(-t,-s,-r)) - \widetilde{\varphi\circ\theta}(D(-t',-s',-r'))\right|\\
	     \leq (\varepsilon(e^{-t-2r}) + \varepsilon(e^{-t'-2r}))\|\varphi\|_{M_{p\text{-}cb}(G)},
	\end{multline*}
	for every $\varphi\in M_{p\text{-}cb}(G)$. On the other hand, since $\theta$ restricted to $K$ is the identity,
	\begin{equation*}
	\widetilde{\varphi\circ\theta}(g)=\int_K\int_K\varphi(k\theta(g)k')\,dk\,dk'=\tilde{\varphi}\circ\theta(g),\quad \forall g\in G.
	\end{equation*}
	And
	\begin{align*}
		\theta(D(-t,-s,-r)) &=D(t,s,r)\\
		&=\left(\begin{matrix} 0 & 0 & -1\\ 0 & -1 & 0\\ -1 & 0 & 0 \end{matrix}\right)D(r,s,t)\left(\begin{matrix} 0 & 0 & -1\\ 0 & -1 & 0\\ -1 & 0 & 0 \end{matrix}\right).
	\end{align*}
	So
	\begin{equation*}
	\widetilde{\varphi\circ\theta}(D(-t,-s,-r))=\tilde{\varphi}(D(r,s,t)).
	\end{equation*}
	Therefore
	\begin{equation*}
	\left|\tilde{\varphi}(D(r,s,t)) - \tilde{\varphi}(D(r',s',t'))\right| \leq (\varepsilon(e^{-t-2r}) + \varepsilon(e^{-t'-2r}))\|\varphi\|_{M_{p\text{-}cb}(G)},
	\end{equation*}
	and the result follows as in Corollary \ref{mg-mg'}.
\end{proof}

We will use Corollaries \ref{mg-mg'} and \ref{mg-mg'2} repeatedly on some particular paths joining two points of $\Lambda$ in order to obtain the desired Cauchy net.

\begin{lem}\label{mgcauchy}
	There exists a constant $C>0$ such that, for all $g,g'\in G$ with $\gamma_3(g')\leq\gamma_3(g)$,
	\begin{equation*}
	\left\|m_{g}-m_{g'}\right\|_{M_{p\text{-}cb}(G)^*}\leq C e^{\frac{\gamma_3(g)}{p}}.
	\end{equation*}
\end{lem}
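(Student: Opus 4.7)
My plan is to use Corollaries \ref{mg-mg'} and \ref{mg-mg'2} repeatedly, moving $g$ and $g'$ through a chain of intermediate points in $\Lambda$ so that each step contributes an error of order $\varepsilon(e^{t}) = 2^{1+2/p} e^{t/p}$, where $t = \gamma_3(g)$. The key observation is that each of the two corollaries yields a bound of order $\varepsilon(e^{t})$ on exactly one of the two halves of the Weyl chamber $\Lambda$: Corollary \ref{mg-mg'} is effective when $\gamma_1 \leq -\gamma_3$ (the ``near'' half, where $r + 2t \leq t$), while Corollary \ref{mg-mg'2} is effective when $\gamma_1 \geq -\gamma_3$ (the ``far'' half, where $-2r - t \leq t$). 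By choosing the appropriate corollary at each stage, one can always move a point toward the diagonal $\{\gamma_1 = -\gamma_3\}$ of $\Lambda$ with error $O(e^{t/p})$.

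Accordingly, the first step is to reduce each of $g$ and $g'$ to this diagonal. Writing $g = D(r, s, t)$: if $r \leq -t$, I would apply Corollary \ref{mg-mg'} at level $t$ to move $g$ to $D(-t, 0, t)$, whose error $\varepsilon(e^{r+2t}) + \varepsilon(e^{t})$ is at most $2 \varepsilon(e^{t})$; if instead $r > -t$, I would apply Corollary \ref{mg-mg'2} with $\gamma_1 = r$ held fixed to move $g$ to $D(r, 0, -r)$, whose error $\varepsilon(e^{-2r-t}) + \varepsilon(e^{-r})$ is also at most $2 \varepsilon(e^{t})$, since $r \geq -t$ implies that both exponents are bounded by $t$. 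In either case $g$ reaches $D(u_g, 0, -u_g)$ with $u_g := \max(r, -t) \geq -t$. The same procedure applied to $g'$ yields $D(u_{g'}, 0, -u_{g'})$ with $u_{g'} \geq -t' \geq -t$. Altogether this reduction costs $O(e^{t/p})$.

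The second and more delicate step is to bridge the two diagonal endpoints. Assuming WLOG $u_g \leq u_{g'}$, the plan is to insert a finite chain $v_0 = u_g < v_1 < \cdots < v_N = u_{g'}$ of geometric growth $v_{k+1} \leq 4 v_k + 3t$ (the largest ratio compatible with having every error in the sub-path bounded by $\varepsilon(e^{t})$), and between consecutive diagonal points $D(v_k, 0, -v_k)$ and $D(v_{k+1}, 0, -v_{k+1})$ to apply a three-move sub-path: Corollary \ref{mg-mg'2} at $\gamma_1 = v_k$ to descend to level $-2v_k - t$; Corollary \ref{mg-mg'} at that deep level to slide $\gamma_1$ from $v_k$ to $v_{k+1}$; and Corollary \ref{mg-mg'2} at $\gamma_1 = v_{k+1}$ to ascend back to level $-v_{k+1}$. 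Since $v_k$ grows roughly as $4^k$, only $N = O(\log(u_{g'}/(-t)))$ iterations are needed to reach $u_{g'}$.

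The main obstacle will be to bound the cumulative error across the $N$ iterations. A uniform bound of $O(e^{t/p})$ per iteration multiplied by $N$ iterations would only yield $O(\log \cdot e^{t/p})$, which is weaker than what is claimed. The resolution should come from a closer inspection of the six error exponents arising in each sub-path: four of them turn out to be of the form $\varepsilon(e^{x_k})$ with $x_k$ tending to $-\infty$ geometrically in $k$ (roughly as $-v_k$), so these four sum over $k$ as a doubly geometric series dominated by $O(e^{-v_0/p}) = O(e^{t/p})$. The two remaining terms, which would be saturated at $\varepsilon(e^{t})$ if the maximal ratio $v_{k+1} = 4v_k + 3t$ were always used, can instead be forced to decay by choosing each $v_{k+1}$ strictly below the maximum (at the cost of increasing $N$ by a constant factor) and by optimizing the intermediate level $t_1$ within its admissible interval $[-2v_k - t,\ (t - v_{k+1})/2]$. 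After this careful accounting, all error contributions from Steps 1 and 2 sum to $C e^{t/p}$ for a universal constant $C$, giving the desired bound.
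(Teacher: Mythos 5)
Your overall architecture --- averaging over $K\times K$, reducing to the Weyl chamber, and telescoping along a zigzag path built from Corollaries \ref{mg-mg'} and \ref{mg-mg'2} --- is the right one, and your Step 1 (reduction of each point to the form $D(u,0,-u)$ at cost $O(e^{t/p})$) is correct. The gap is at the launch point of the chain in Step 2. Writing $w_k=v_k+t\geq 0$, your admissible recursion $v_{k+1}\leq 4v_k+3t$ reads $w_{k+1}\leq 4w_k$; but whenever $\gamma_1(g)\leq-\gamma_3(g)$ your reduction lands exactly at $u_g=-t$, i.e.\ $w_0=0$, and then no chain with $v_0<v_1\leq 4v_0+3t$ exists at all: the admissible interval $[-2v_0-t,(t-v_1)/2]$ for $t_1$ is nonempty only if $v_1\leq -t=v_0$. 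The point $D(-t,0,t)$ is precisely the balanced point where both error exponents $\gamma_1+2\gamma_3$ and $-2\gamma_1-\gamma_3$ equal $t$, so any move away from it with error at most $\varepsilon(e^{t})$ makes zero progress; it is a fixed point of your dynamics, unreachable (and inescapable) in finitely many steps under your constraint. Your proposed remedy --- taking $v_{k+1}$ strictly below the maximum and optimizing $t_1$ --- tightens the recursion to $w_{k+1}\leq 4w_k-3\delta_k$ and therefore makes the degeneracy worse, not better; and when $w_0$ is merely small, the number of near-saturated steps, each costing about $\varepsilon(e^{t})$, blows up as $w_0\to 0$, so no uniform constant emerges. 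The scheme is repairable --- for instance by first sliding, via one application of Corollary \ref{mg-mg'} at level $t$, from $D(-t,0,t)$ to the offset point $D(-t+1,-1,t)$ at cost $\varepsilon(e^{t})+\varepsilon(e^{t+1})$, after which $w=1$ and the geometric growth can start --- but as written the argument does not go through. You also omit the easy case where $\gamma_3(g)$ is bounded below, where one should simply use $\|m_g-m_{g'}\|\leq 2$.

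For comparison, the paper's proof organizes the path differently and sidesteps this issue entirely: it descends $\gamma_3$ \emph{linearly}, by $2$ per double-step, along the offset curve $(1-\tau,\pm 1,\tau)$ (so $\gamma_1+\gamma_3=1$ rather than $0$), alternating the two corollaries. The number of steps is then of order $|t-t'|$ rather than logarithmic, but the error incurred at level $\gamma_3=t-2i$ is $O(e^{(t-2i)/p})$, so the total is the convergent geometric series $O\bigl(e^{t/p}/(1-e^{-2/p})\bigr)$. In other words, the paper lets the per-step error decay with the \emph{current} level of $\gamma_3$ instead of trying to keep every step within the fixed budget $\varepsilon(e^{t})$ --- which is exactly the requirement that forces your chain to stall at the balanced point.
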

\begin{proof}
	Put $(r,s,t)=(\gamma_1(g),\gamma_2(g),\gamma_3(g))$ and $(r',s',t')=(\gamma_1(g'),\gamma_2(g'),\gamma_3(g'))$. We shall consider first the case $t\leq -1$. Let $n\geq 0$ such that
	\begin{equation*}
	t-2(n+1)< t' \leq t-2n.
	\end{equation*}
	Let $\varphi\in M_{p\text{-}cb}(G)$ with $\|\varphi\|_{M_{p\text{-}cb}(G)}=1$ and define $\phi(r,s,t)=\tilde{\varphi}(D(r,s,t))$. Assume first that $s=s'=-1$. Then 
		\begin{align*}
		\phi(1&-t,-1,t) -\phi(1-t',-1,t')\\
	=&\sum_{i=0}^{n-1} \phi(1-t+2i,-1,t-2i) - \phi(1-t+2i,1,t-2(i+1))\\
	&+ \sum_{i=0}^{n-1} \phi(1-t+2i,1,t-2(i+1)) - \phi(1-t+2(i+1),-1,t-2(i+1))\\
	&+ \phi(1-t+2n,-1,t-2n) - \phi(1-t+2n,-1+t-2n-t',t')\\
	&+ \phi(1-t+2n,-1+t-2n-t',t') - \phi(1-t',-1,t').
	\end{align*}
	Observe that $\varepsilon(e^{-a})=\tilde{C}e^{-\frac{a}{p}}$ for all $a\geq 0$. Then, using Corollaries \ref{mg-mg'} and \ref{mg-mg'2}, we get
	\begin{align*}
	|\phi(1-t+2i,-1,t-2i) - \qquad\qquad\qquad & \\
	 \phi(1-t+2i,1,t-2(i+1))| &\leq  \tilde{C}(e^{\frac{t-2i-2}{p}}+e^{\frac{t-2i}{p}})\\
	&\leq  2\tilde{C}e^{\frac{t}{p}}e^{\frac{-2i}{p}},
	\end{align*}
	\begin{align*}
	|\phi(1-t+2i,1,t-2(i+1)) \qquad\qquad\qquad & \\
	- \phi(1-t+2(i+1),-1,t-2(i+1))| &\leq \tilde{C}(e^{\frac{t-2i-3}{p}}+e^{\frac{t-2i-1}{p}})\\
	&\leq  2\tilde{C}e^{\frac{t}{p}}e^{\frac{-2i}{p}},
	\end{align*}
	and
	\begin{align*}
	|\phi(1-t+2n,-1,t-2n) \qquad\qquad\qquad & \\
	- \phi(1-t+2n,-1+t-2n-t',t')| &\leq \tilde{C}(e^{\frac{t-2n-2}{p}}+e^{\frac{2t-2-4n-t'}{p}})\\
	&\leq  \tilde{C}(e^{\frac{t-2n-2}{p}}+e^{\frac{t-2n}{p}})\\
	&\leq  2\tilde{C}e^{\frac{t}{p}}.
	\end{align*}
	Here we used the fact that $t-2(n+1)-t'<0$. We also get
	\begin{align*}
	|\phi(1-t+2n,-1+t-2n-t',t') \quad & \\
	- \phi(1-t',-1,t')| &\leq \tilde{C}(e^{\frac{1-t+2n+t'}{p}}+e^{\frac{1+t'}{p}})\\
	&\leq  2\tilde{C}e^{\frac{1+t'}{p}}\\
	&\leq  2\tilde{C}e^{\frac{1}{p}}e^{\frac{t}{p}}.
	\end{align*}
	Putting everything together, we obtain
	\begin{equation*}
	|\phi(1-t,-1,t)-\phi(1-t',-1,t')|\leq C_1e^{\frac{t}{p}},
	\end{equation*}
	with
	\begin{equation*}
	C_1=2\tilde{C}\left(\frac{2}{1-e^{\frac{-2}{p}}}+1+e^{\frac{1}{p}}\right),
	\end{equation*}
	and this constant depends only on $p$. Now we deal with the general case. If $s>-1$, then
	\begin{equation*}
	|\phi(r,s,t)-\phi(1-t,-1,t)|\leq \tilde{C} (e^{\frac{r+2t}{p}}+e^{\frac{1+t}{p}}) \leq 2\tilde{C}e^{\frac{1}{p}}e^{\frac{t}{p}}.
	\end{equation*}
	If $s<-1$,
	\begin{equation*}
	|\phi(r,s,t)-\phi(r,-1,1-r)|\leq \tilde{C} (e^{\frac{-2r-t}{p}}+e^{\frac{-r-1}{p}}) \leq 2\tilde{C}e^{\frac{-2}{p}}e^{\frac{t}{p}}.
	\end{equation*}
	In both cases we found $(\tilde{r},-1,\tilde{t})\in\Lambda$ satisfying
	\begin{equation*}
	|\phi(r,s,t)-\phi(\tilde{r},-1,\tilde{t})|\leq C_2 e^{\frac{t}{p}},
	\end{equation*}
	with $C_2$ depending only on $p$, and such that $\tilde{t}\leq t$. Therefore
	\begin{align*}
	|\phi(r,s,t) - \phi(r',s',t')| &\leq |\phi(r,s,t) - \phi(\tilde{r},-1,\tilde{t})| + |\phi(\tilde{r},-1,\tilde{t}) - \phi(\tilde{r}',-1,\tilde{t}')|\\
	& \quad\ + |\phi(\tilde{r}',-1,\tilde{t}') - \phi(r',s',t')|\\
	& \leq  C_2 e^{\frac{t}{p}} + C_1e^{\frac{1}{p}\max\{\tilde{t},\tilde{t}'\}} + C_2 e^{\frac{t'}{p}}\\
	& \leq  C_3 e^{\frac{t}{p}},
	\end{align*}
	with $C_3=(C_1+2C_2)$. This is valid when $t\leq -1$. If $t>-1$, then
	\begin{align*}
	|\phi(r,s,t) - \phi(r',s',t')| &\leq |\phi(r,s,t)| + |\phi(r',s',t')|\\
	& \leq 2 \|\varphi\|_{M_{p\text{-}cb}(G)} = 2\\
	& \leq C_4 e^{\frac{t}{p}},
	\end{align*}
	with $C_4=2e^{\frac{1}{p}}$. We obtain the result taking $C=\max\{C_3,C_4\}$.
\end{proof}

\begin{proof}[Proof of Theorem \ref{SL3pAP}]
	Let $p\in[2,\infty)$. Observe that, for $g\in G$, $\|g\|=e^{\gamma_1(g)}$ and $\gamma_3(g)\leq-\frac{1}{2}\gamma_1(g)$. So $\gamma_3(g)\to -\infty$ when $\|g\|\to\infty$. Then Lemma \ref{mgcauchy} implies the existence of $\mu\in M_{p\text{-}cb}(G)^*$ such that $m_g\to\mu$ as $\|g\|\to\infty$. Now define a new family of measures $\tilde{m}_g$ on $G$ by
	\begin{align}
		\int f\,d\tilde{m}_g &=\fint_{B_2}\int f(x)\,dm_{hg}(x)\, dh \notag\\
		&=\fint_{B_2}\int_K\int_K f(khgk')\,dk\,dk'\, dh,\quad\forall f\in C_c(G),\label{mtildg}
	\end{align}
	where $B_2=\{g\in G\ :\ \|g\|<2\}$ and $\fint_{B_2}\cdots dh$ stands for the normalized integration over $B_2$. These again are probability measures for each $g\in G$. Moreover, observe that equation (\ref{mtildg}) says that $\tilde{m}_g=\nu_K\ast \chi_{B_2}\ast \delta_g\ast\nu_K$, where $\nu_K$ is the normalized Haar measure on $K$ and $\chi_{B_2}$ is the normalized indicator function on $B_2$. Since $L_1(G)$ is a two-sided ideal in the convolution algebra of complex measures (see e.g. \cite[\S 2.5]{Foll}), we have that $\tilde{m}_g\in L_1(G)$ for all $g\in G$.
	On the other hand, using Lemma \ref{mgcauchy}, for all $\varphi\in M_{p\text{-}cb}(G)$ with $\|\varphi\|_{M_{p\text{-}cb}(G)}=1$,
	\begin{equation*}
	|\langle\mu-\tilde{m}_g,\varphi\rangle | \leq \fint_{B_2} |\langle\mu-m_{hg},\varphi\rangle |\,dh \leq \fint_{B_2} Ce^\frac{\gamma_3(hg)}{p}\,dh.
	\end{equation*}
	But $e^{\gamma_3(hg)}=\|(hg)^{-1}\|^{-1}$ and $\|g^{-1}\|\leq\|g^{-1}h^{-1}\|\|h\|$. Thus
	\begin{equation*}
	e^{\gamma_3(hg)}\leq \|h\|\|g^{-1}\|^{-1}\leq 2 e^{\gamma_3(g)}.
	\end{equation*}
	And so, $\tilde{m}_g$ converges to $\mu$ when $\|g\|\to\infty$. This implies that $\mu$ is in the adherence of $L_1(G)$ in $M_{p\text{-}cb}(G)^*$, that is, in $Q_{p\text{-}cb}(G)$. Now
	\begin{equation*}
	\langle\mu,1\rangle=\lim_{g\to\infty}m_g(G)=1.
	\end{equation*}
	And if $\varphi\in M_{p\text{-}cb}(G)\cap C_c(G)$, then for every $g$ sufficiently large, $\varphi(g)=0$. Since $\|kgk'\|=\|g\|$ for all $k,k'\in K$, we have also $\varphi(kgk')=0$. Therefore
	\begin{equation*}
	\langle\mu,\varphi\rangle=\lim_{g\to\infty}\int_K\int_K\varphi(kgk')\,dk\,dk'=0.
	\end{equation*}
	Thus, if $G$ had the $p\,$-AP, there would exist a net $(\varphi_i)$ in $A_{p,c}(G)$ such that
	\begin{equation*}
	0=\langle\mu,\varphi_i\rangle\to\langle\mu,1\rangle=1.
	\end{equation*}
	And this is a contradiction. Therefore, $G$ does not have the $p\,$-AP for any $p\in[2,\infty)$. By Proposition \ref{pAPiffp'AP}, the same holds for $1<p<2$.
\end{proof}

\section{The symplectic group $\text{Sp}(2,\R)$}

In this section we prove Theorem \ref{Sp2pAP}. The strategy is the same as for $\text{SL}(3,\R)$, with the main difference being the choice of the paths in the Weyl chamber in order to establish a result analogous to Lemma \ref{mgcauchy}. This is achieved by considering two different families of operators that will play the role of $(T_\delta)$, namely (\ref{defTtheta}) and (\ref{defStheta}). These operators were defined in \cite{Laa}. From now on we fix $p\in[2,\infty)$, $G=\text{Sp}(2,\R)$ and $K$ as in (\ref{defK=U2}).
Recall that $K$ is isomorphic to $\text{U}(2)$, and
\begin{equation*}
\text{SO}(2,\R)=\left\{\left(\begin{matrix}\cos\theta & -\sin\theta\\ \sin\theta & \cos\theta \end{matrix}\right)\ \Big|\ \theta\in [0,2\pi)\right\}
\end{equation*}
may be viewed as a subgroup of $\text{U}(2)$ by inclusion. Therefore, using the isomorphism (\ref{isomKU2}), we may define a family of operators $(T_\theta)$ on $L_p(K)$ by
\begin{equation}\label{defTtheta}
T_\theta f=\int\limits_{\text{SO}(2)}\int\limits_{\text{SO}(2)}\lambda(rd_\theta r')f\,dr\,dr',\quad \forall\theta\in [0,2\pi),\ \forall f\in L_p(K),
\end{equation}
where
\begin{equation*}
d_\theta=\left(\begin{matrix} e^{\ii\theta} & 0\\ 0 & e^{-\ii\theta}\end{matrix}\right)\in \text{SU}(2).
\end{equation*}
Here $\text{SO}(2)$ is endowed with the normalized Haar measure.

\begin{lem}
	There is a constant $C_1>0$ such that, for all $\theta\in\left[\frac{\pi}{6},\frac{\pi}{3}\right]$,
	\begin{equation*}
	\|T_{\theta}-T_{\frac{\pi}{4}}\|_{\mathcal{B}(L_p)} \leq  C_1\left|\theta-\tfrac{\pi}{4}\right|^{\frac{1}{p}}.
	\end{equation*}
\end{lem}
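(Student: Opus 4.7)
The plan is to mirror the proof of Lemma~\ref{estimTd}: first establish an $L_2$ estimate of the form
\[
\|T_\theta - T_{\pi/4}\|_{\mathcal{B}(L_2(K))} \leq C_0\,\bigl|\theta - \tfrac{\pi}{4}\bigr|^{1/2}
\]
for $\theta \in [\pi/6, \pi/3]$, and then interpolate with the trivial $L_q$-bound to reach the claimed $L_p$-estimate with exponent $1/p$.

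The interpolation step is essentially identical to that in Lemma~\ref{estimTd}. Since each $T_\theta$ is an average (over the compact group $\text{SO}(2)\times\text{SO}(2)$) of operators $\lambda_p(rd_\theta r')$, each of which is an isometry on every $L_q(K)$, one has $\|T_\theta\|_{\mathcal{B}(L_q)}\leq 1$ and consequently $\|T_\theta - T_{\pi/4}\|_{\mathcal{B}(L_q)} \leq 2$ for all $q \in (1,\infty)$. Given $p \in [2,\infty)$, fixing $q > p$ and $\theta' \in (1 - 2/p,1)$ with $1/p = (1-\theta')/2 + \theta'/q$, Riesz--Thorin yields
\[
\|T_\theta - T_{\pi/4}\|_{\mathcal{B}(L_p)} \leq \bigl(C_0\,|\theta - \tfrac{\pi}{4}|^{1/2}\bigr)^{1-\theta'}\cdot 2^{\theta'},
\]
and letting $\theta' \to 1 - 2/p$ produces the desired bound with a constant $C_1$ depending only on $p$.

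The real work is the $L_2$ estimate. Because $T_\theta$ is $\text{SO}(2)$-biinvariant, the same reduction used in Lemma~\ref{estimTd} (averaging any $f\in L_2(K)$ over $\text{SO}(2)$ from the left to produce $\tilde f$ with $T_\theta\tilde f = T_\theta f$ and $\|\tilde f\|_2\leq\|f\|_2$) allows us to restrict to the subspace of $\text{SO}(2)$-biinvariant functions on $K\cong\text{U}(2)$. This is a Gelfand pair setting, and $T_\theta$ diagonalizes on the corresponding spherical functions: the difference $T_\theta - T_{\pi/4}$ then becomes a multiplier whose symbol is the difference of two spherical transforms of point masses at $d_\theta$ and $d_{\pi/4}$. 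Getting the Hölder-$1/2$ bound amounts to a quantitative control of the derivative of the spherical transform in $\theta$, which is precisely the kind of estimate obtained by Lafforgue in \cite{Laf} and made explicit in the $\text{Sp}(2,\R)$ setting by de Laat \cite{Laa}; the result should be cited from there.

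The principal difficulty is thus the $L_2$ estimate. Once it is granted, the interpolation argument is routine. The restriction $\theta \in [\pi/6, \pi/3]$ keeps $\theta$ uniformly away from the degenerate values $0$ and $\pi/2$ at which the double coset of $d_\theta$ degenerates, and ensures that all the constants coming from the spherical transform analysis remain controlled; it will reappear exactly where one needs to bound the ratio between the natural parameter $|\theta - \pi/4|$ and the operator-theoretic quantity governing the size of the multiplier.
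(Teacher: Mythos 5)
Your proposal is correct and follows essentially the same route as the paper: the $L_2$ case is quoted from de Laat \cite[Lemma 3.1]{Laa}, and the passage to general $p$ is the same interpolation with the trivial bound $\|T_\theta-T_{\pi/4}\|_{\mathcal{B}(L_q)}\leq 2$ and the limit $\theta'\to 1-\frac{2}{p}$ already used in Lemma \ref{estimTd}. The extra discussion of the spherical-function mechanism behind the $L_2$ estimate is accurate background but not needed, since that estimate is simply cited.
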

\begin{proof}
	The inequality for $p=2$ is given by \cite[Lemma 3.1]{Laa}. The other cases follow by interpolation as in Lemma \ref{estimTd}.
\end{proof}

We consider also
\begin{equation}\label{defStheta}
S_\theta=\frac{1}{2\pi}\int_0^{2\pi}\lambda(d_\varphi u_\theta d_{-\varphi})\,d\varphi, \quad\forall\theta\in\R,\ \forall f\in L_p(K),
\end{equation}
where
\begin{equation*}
u_\theta=\frac{1}{\sqrt{2}}\left(\begin{matrix} e^{\ii\theta} & -1\\ 1 & e^{-\ii\theta}\end{matrix}\right).
\end{equation*}

\begin{lem}
	There is a constant $C_2>0$ such that, for all $\theta_1,\theta_2\in\R$,
	\begin{equation*}
	\|S_{\theta_1}-S_{\theta_2}\|_{\mathcal{B}(L_p)} \leq  C_2\left|\theta_1-\theta_2\right|^{\frac{1}{2p}}.
	\end{equation*}
\end{lem}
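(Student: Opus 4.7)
The plan is to imitate exactly the two-step strategy used for the family $(T_\theta)$ in the preceding lemma: first establish the estimate when $p=2$ by quoting a result from \cite{Laa}, and then extend it to general $p\in[2,\infty)$ by Riesz--Thorin interpolation. I first note that $S_\theta$ is well defined on $L_p(K)$ because the map $\varphi\mapsto d_\varphi u_\theta d_{-\varphi}$ is continuous into $K$ and $\lambda_p$ is strongly continuous, so the integrand $\varphi\mapsto\lambda_p(d_\varphi u_\theta d_{-\varphi})f$ is continuous on $[0,2\pi]$ for every $f\in L_p(K)$. Moreover, each $\lambda_p(d_\varphi u_\theta d_{-\varphi})$ is an isometry of $L_p(K)$, hence $\|S_\theta\|_{\mathcal{B}(L_p(K))}\leq 1$ for every $1<p<\infty$; in particular the trivial bound $\|S_{\theta_1}-S_{\theta_2}\|_{\mathcal{B}(L_q)}\leq 2$ holds for every $q$.

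For the case $p=2$ I would invoke the corresponding $L_2$ estimate for the operators $S_\theta$ from \cite{Laa}, analogous to the \cite[Lemma 3.1]{Laa} used just above for $T_\theta$: there is an absolute constant $C>0$ such that $\|S_{\theta_1}-S_{\theta_2}\|_{\mathcal{B}(L_2)}\leq C|\theta_1-\theta_2|^{1/4}$. This is the only genuinely nontrivial ingredient and the real obstacle; it is a spherical-harmonic / oscillatory-integral estimate on $K\simeq\text{U}(2)$ and it is what forces the Hölder exponent $\tfrac14$ at the $L_2$ endpoint (and hence $\tfrac{1}{2p}$ at $L_p$ after interpolation).

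For the interpolation step, fix $p>2$, choose $\alpha\in(1-\tfrac{2}{p},1)$ and $q\in(p,\infty)$ such that $\tfrac{1}{p}=\tfrac{1-\alpha}{2}+\tfrac{\alpha}{q}$, and apply the Riesz--Thorin theorem with the $L_2$ bound $C|\theta_1-\theta_2|^{1/4}$ and the $L_q$ bound $2$, obtaining $\|S_{\theta_1}-S_{\theta_2}\|_{\mathcal{B}(L_p)}\leq \bigl(C|\theta_1-\theta_2|^{1/4}\bigr)^{1-\alpha}2^{\alpha}$. Letting $q\to\infty$, equivalently $\alpha\to 1-\tfrac{2}{p}$, the exponent of $|\theta_1-\theta_2|$ tends to $\tfrac{1-\alpha}{4}=\tfrac{1}{2p}$, giving the desired inequality with $C_2=C^{2/p}\,2^{1-2/p}$. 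This is the same limiting argument already carried out in the proof of Lemma \ref{estimTd}, so no new idea is needed once the $L_2$ bound is in hand.
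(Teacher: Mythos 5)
Your proposal is correct and follows exactly the paper's proof: the $p=2$ case is quoted from \cite[Lemma 3.2]{Laa} (giving the H\"older exponent $\tfrac14$ at the $L_2$ endpoint), and the general case follows by the same Riesz--Thorin interpolation with limit $q\to\infty$ already used for the operators $T_\delta$. Your additional remarks on well-definedness and the trivial bound $\|S_{\theta_1}-S_{\theta_2}\|_{\mathcal{B}(L_q)}\leq 2$ are consistent with what the paper leaves implicit.
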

\begin{proof}
	The case $p=2$ is given by \cite[Lemma 3.2]{Laa} and the others follow by interpolation as in Lemma \ref{estimTd}.
\end{proof}

Again since $PM_p(K)=CV_p(K)$, the operators $T_\theta$ and $S_\theta$ belong to $PM_p(K)$. In order to obtain a result analogous to Lemma \ref{phiDkD}, we define the following elements of $G$:
\begin{equation*}
D_a=\left(\begin{matrix} e^a & 0 & 0 & 0\\ 0 & 1 & 0 & 0\\ 0 & 0 & e^{-a} & 0\\ 0 & 0 & 0 & 1 \end{matrix} \right)\quad
D_a'=\left(\begin{matrix} e^a & 0 & 0 & 0\\ 0 & e^a & 0 & 0\\ 0 & 0 & e^{-a} & 0\\ 0 & 0 & 0 & e^{-a} \end{matrix} \right),\quad a\in\R.
\end{equation*}
Consider also
\begin{equation*}
v=\frac{1}{\sqrt{2}}\left(\begin{matrix} 1+\ii & 0\\ 0 & 1+\ii\end{matrix}\right)\in\text{U}(2),
\end{equation*}
which again, by the isomorphism (\ref{isomKU2}), defines an element of $K$. Then, defining the average $\tilde{\varphi}$ of a multiplier $\varphi\in M_{p\text{-}cb}(G)$ as in (\ref{avphi}), we obtain the following two lemmas, which can be proved in the same way as Lemma \ref{phiDkD}, using the operators $T_\theta$ and $S_\theta$ instead of $T_\delta$.

\begin{lem}\label{DadthvDa}
	For every $\varphi\in M_{p\text{-}cb}(G)$, $a\in\R$ and $\theta\in\left[\frac{\pi}{6},\frac{\pi}{3}\right]$,
	\begin{equation*}
	|\tilde{\varphi}(D_{a}'d_\theta vD_{a}')-\tilde{\varphi}(D_{2a}')|\leq C_1\left|\theta-\tfrac{\pi}{4}\right|^{\frac{1}{p}}\|\varphi\|_{M_{p\text{-}cb}(G)}.
	\end{equation*}
\end{lem}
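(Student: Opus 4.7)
The plan is to mirror the proof of Lemma~\ref{phiDkD} line by line, with the pair $(T_\theta, T_{\pi/4})$ playing the role of $(T_\delta, T_0)$ and with the scalar $v \in K$ inserted so that the endpoint computation lands in the correct $KAK$ double coset. First, I define
\begin{equation*}
\phi(s) = \tilde{\varphi}(D_a' \, s\, v\, D_a'), \qquad s \in K,
\end{equation*}
and claim that $\phi \in M_{p\text{-}cb}(K)$ with $\|\phi\|_{M_{p\text{-}cb}(K)} \leq \|\varphi\|_{M_{p\text{-}cb}(G)}$. Writing $\varphi(AB^{-1}) = \langle \beta(A), \alpha(B)\rangle$ as in (\ref{phiab}) and averaging inside the double $K$-action defining $\tilde{\varphi}$, I obtain a factorization $\phi(st^{-1}) = \langle \tilde{\beta}(s), \tilde{\alpha}(t)\rangle$ with
\begin{equation*}
\tilde{\beta}(s) = \int_K \beta(k D_a' s)\,dk, \qquad \tilde{\alpha}(t) = \int_K \alpha\bigl(k^{-1}(D_a')^{-1} v^{-1} t\bigr)\,dk,
\end{equation*}
the averaged maps being continuous by Lemma~\ref{lemcontint} and of sup-norm at most $\|\beta\|_\infty$ and $\|\alpha\|_\infty$ respectively. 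Proposition~\ref{A=McbA} then upgrades $\phi$ to an element of $A_p(K)$ with the same norm.

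Next, since $T_\theta, T_{\pi/4} \in PM_p(K) = A_p(K)^*$, the estimate recorded above for $\|T_\theta - T_{\pi/4}\|_{\mathcal{B}(L_p(K))}$ gives
\begin{equation*}
\bigl|\langle T_\theta - T_{\pi/4},\, \phi\rangle\bigr| \leq C_1 \bigl|\theta - \tfrac{\pi}{4}\bigr|^{\frac{1}{p}} \|\varphi\|_{M_{p\text{-}cb}(G)}.
\end{equation*}
To identify both pairings I expand
\begin{equation*}
\langle T_\theta, \phi\rangle = \int_{\text{SO}(2)}\!\int_{\text{SO}(2)} \tilde{\varphi}(D_a' r d_\theta r' v D_a')\,dr\,dr'.
\end{equation*}
Under (\ref{isomKU2}) the subgroup $\text{SO}(2)$ embeds in $K$ as block-diagonal matrices of the form $\mathrm{diag}(R,R)$, which commute with $D_a' = \mathrm{diag}(e^a I_2, e^{-a} I_2)$; and $v = e^{\ii\pi/4} I$ is a scalar in $\text{U}(2)$, hence central in $K$. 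Pushing $r$ and $r'$ past $D_a'$ and $v$ past $r'$, then applying the $K$-biinvariance of $\tilde{\varphi}$, both integrations collapse and $\langle T_\theta, \phi\rangle = \tilde{\varphi}(D_a' d_\theta v D_a')$. The same manipulation at $\theta = \pi/4$ leaves us needing to check that $D_a' d_{\pi/4} v D_a'$ lies in the double coset $K D_{2a}' K$, in which case $\langle T_{\pi/4}, \phi\rangle = \tilde{\varphi}(D_{2a}')$ and the inequality of the lemma follows.

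I expect the main obstacle to be that final identification at $\theta = \pi/4$: the factor $v = e^{\ii\pi/4}I$ is inserted precisely so that $d_{\pi/4} v$ realizes a phase which, when conjugated by $D_a'$ from both sides, yields an element $KAK$-equivalent to $D_{2a}'$. Verifying this requires an explicit matrix computation via (\ref{isomKU2}) and the form (\ref{Dbetagamma}) of $\overline{A^+}$. Once this algebraic identity is settled, the rest is a direct transcription of the $\text{SL}(3,\R)$ argument, with Proposition~\ref{A=McbA} again supplying the bridge from $M_{p\text{-}cb}(K)$ back to $A_p(K)$.
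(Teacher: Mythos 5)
Your overall strategy is exactly the one the paper intends (the paper itself only says the lemma ``can be proved in the same way as Lemma \ref{phiDkD}''): define $\phi(s)=\tilde{\varphi}(D_a'svD_a')$ on $K$, check $\phi\in M_{p\text{-}cb}(K)=A_p(K)$ with $\|\phi\|_{A_p(K)}\leq\|\varphi\|_{M_{p\text{-}cb}(G)}$ via the averaged $\tilde\alpha,\tilde\beta$ and Lemma \ref{lemcontint}, pair with $T_\theta-T_{\pi/4}\in PM_p(K)$, and collapse the double integral using the facts that $\text{SO}(2)$ embeds as $\mathrm{diag}(R,R)$ (hence commutes with $D_a'$) and that $v$ is a scalar in $\text{U}(2)$ (hence central in $K$). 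All of that is correct and gives $\langle T_\theta,\phi\rangle=\tilde{\varphi}(D_a'd_\theta vD_a')$.

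The gap is the one step you defer: the identification of $\langle T_{\pi/4},\phi\rangle$. If you carry out the computation you will find that it does \emph{not} land in $KD_{2a}'K$. Indeed $d_{\pi/4}v=\mathrm{diag}(e^{\ii\pi/2},1)=\mathrm{diag}(\ii,1)$, whose image under (\ref{isomKU2}) acts as the rotation by $\pi/2$ on the $(1,3)$-plane and as the identity on the $(2,4)$-plane; conjugating by $D_a'=\mathrm{diag}(e^a,e^a,e^{-a},e^{-a})$ therefore gives a matrix with singular values $1,e^{2a},1,e^{-2a}$. In the notation of (\ref{Dbetagamma}) this element lies in $KD(2a,0)K=KD_{2a}K$, not in $KD_{2a}'K=KD(2a,2a)K$, whose singular values are $e^{\pm 2a}$ each with multiplicity two. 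So the correct conclusion of your argument is
\begin{equation*}
\bigl|\tilde{\varphi}(D_a'd_\theta vD_a')-\tilde{\varphi}(D_{2a})\bigr|\leq C_1\bigl|\theta-\tfrac{\pi}{4}\bigr|^{\frac{1}{p}}\|\varphi\|_{M_{p\text{-}cb}(G)},
\end{equation*}
i.e.\ the anchor point is $D_{2a}=D(2a,0)$ rather than $D_{2a}'$; the statement as printed appears to contain a typo. This is harmless for the only use of the lemma (Lemma \ref{ineqeps1}), where the anchor term is the same for $w_1$ and $w_2$ and cancels in the triangle inequality, but you should not expect the matrix computation to confirm the displayed statement literally — assuming it does, as your proposal does, is precisely where the argument would break. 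Complete the computation and state the lemma with $D_{2a}$ (equivalently, with $\tilde{\varphi}(D_a'd_{\pi/4}vD_a')$) and the proof is complete.
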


\begin{lem}\label{DauthDa}
	For every $\varphi\in M_{p\text{-}cb}(G)$ and $a, \theta_1,\theta_2\in\R$,
	\begin{equation*}
	|\tilde{\varphi}(D_{a} u_{\theta_1} D_{a})-\tilde{\varphi}(D_{a} u_{\theta_2} D_{a})|\leq C_2\left|\theta_1-\theta_2\right|^{\frac{1}{2p}}\|\varphi\|_{M_{p\text{-}cb}(G)}.
	\end{equation*}
\end{lem}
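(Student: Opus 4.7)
The plan is to mirror the proof of Lemma~\ref{phiDkD}, with $S_\theta$ playing the role of $T_\delta$. First, I would define $\phi : K \to \mathbb{C}$ by $\phi(s) = \tilde\varphi(D_a s D_a)$ and show that $\phi \in M_{p\text{-}cb}(K)$ with $\|\phi\|_{M_{p\text{-}cb}(K)} \leq \|\varphi\|_{M_{p\text{-}cb}(G)}$. Taking $\alpha, \beta$ as in \eqref{phiab}, the averaging argument from Lemma~\ref{phiDkD} (together with Lemma~\ref{lemcontint} for continuity of the averaged maps) yields a factorization $\phi(st^{-1}) = \langle \tilde\beta(s), \tilde\alpha(t)\rangle$ with
\[
\tilde\alpha(t) = \int_K \alpha(k^{-1}D_a^{-1}t)\,dk, \qquad \tilde\beta(s) = \int_K \beta(kD_a s)\,dk;
\]
Proposition~\ref{A=McbA} then lifts $\phi$ to $A_p(K)$ with the same norm.

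Since $K$ is compact (hence amenable), $S_\theta \in PM_p(K) = A_p(K)^*$, so pairing $S_{\theta_1} - S_{\theta_2}$ against $\phi$ and invoking the operator-norm estimate on $S_{\theta_1} - S_{\theta_2}$ gives
\[
\bigl|\langle S_{\theta_1} - S_{\theta_2}, \phi\rangle\bigr| \leq C_2\,|\theta_1-\theta_2|^{1/(2p)}\,\|\varphi\|_{M_{p\text{-}cb}(G)}.
\]
It remains to identify $\langle S_\theta, \phi\rangle$ with $\tilde\varphi(D_a u_\theta D_a)$; unwinding the definition of $S_\theta$, this reduces to showing
\[
\tilde\varphi\bigl(D_a d_\varphi u_\theta d_{-\varphi} D_a\bigr) = \tilde\varphi(D_a u_\theta D_a) \qquad \text{for every } \varphi \in [0, 2\pi).
\]

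The hard part will be precisely this last identity. In contrast with Lemma~\ref{phiDkD}, where the averaging subgroup commutes with $D_a$ and the analogous identity is immediate from $K$-biinvariance, here $d_\varphi$ does not commute with $D_a$. Since $\tilde\varphi$ is constant on $K\overline{A^+}K$ double cosets, what is actually needed is that $D_a d_\varphi u_\theta d_{-\varphi} D_a$ and $D_a u_\theta D_a$ share the same singular values as $4\times 4$ real matrices. Using the elementary identity $u_\theta = d_{\theta/2} u_0 d_{\theta/2}$, this in turn reduces to showing that the singular values of $D_a d_\alpha u_0 d_\beta D_a$ depend only on $\alpha+\beta$. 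I expect to establish this by a direct $2\times 2$-block computation in the $\mathrm{U}(2) \hookrightarrow \mathrm{O}(4)$ embedding, using the decompositions $D_a = D \oplus D^{-1}$ with $D = \mathrm{diag}(e^a, 1)$ and $u_0^{(4)} = u_0 \oplus u_0$, and then diagonalising the resulting block form via the orthogonal involution $\tfrac{1}{\sqrt 2}\left(\begin{smallmatrix} I & I \\ I & -I \end{smallmatrix}\right) \in \mathrm{O}(4)$ to expose the same singular-value blocks $D u_\theta D$ and $D^{-1} u_\theta D^{-1}$ as occur in $D_a u_\theta D_a$.
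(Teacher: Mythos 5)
Your skeleton is exactly the paper's: the author merely states that Lemma \ref{DauthDa} ``can be proved in the same way as Lemma \ref{phiDkD}, using the operators $T_\theta$ and $S_\theta$ instead of $T_\delta$'', and your first two paragraphs reproduce that argument faithfully (averaged factorization of $\phi(s)=\tilde\varphi(D_asD_a)$ via Lemma \ref{lemcontint}, Proposition \ref{A=McbA}, $S_\theta\in CV_p(K)=PM_p(K)$, pairing $S_{\theta_1}-S_{\theta_2}$ against $\phi$). More importantly, you have correctly isolated the one point where the analogy with Lemma \ref{phiDkD} genuinely breaks: there the averaging subgroup $U$ commutes with $D_a$, whereas here $d_\varphi$ does not commute with $D_a=\mathrm{diag}(e^a,1,e^{-a},1)$, so the identification $\langle S_\theta,\phi\rangle=\tilde\varphi(D_au_\theta D_a)$ really does require that $D_ad_\varphi u_\theta d_{-\varphi}D_a$ and $D_au_\theta D_a$ lie in the same $K\overline{A^+}K$ double coset, i.e.\ have the same singular values. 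The paper leaves this implicit (it is inherited from the computations of Haagerup--de Laat and de Laat), and your reduction via $u_\theta=d_{\theta/2}u_0d_{\theta/2}$ to the claim that the singular values of $D_ad_\alpha u_0d_\beta D_a$ depend only on $\alpha+\beta$ is a correct and clean way to organize what must be checked.

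The only weak point is the very last step. The proposed conjugation by $\tfrac1{\sqrt2}\left(\begin{smallmatrix}I&I\\I&-I\end{smallmatrix}\right)$ does not split $D_au_\theta D_a$ into real blocks ``$Du_\theta D$ and $D^{-1}u_\theta D^{-1}$'' for general $\theta$: writing $u_\theta=A+\ii B$, one has $B=\tfrac1{\sqrt2}\mathrm{diag}(\sin\theta,-\sin\theta)$, so $DBD^{-1}=B$ and the real form is $\left(\begin{smallmatrix}DAD&-B\\B&D^{-1}AD^{-1}\end{smallmatrix}\right)$, which is not orthogonally equivalent to such a block sum once $B\neq0$ (your computation only works at $\theta=0$ and for the conjugates of $u_0$ by $d_{\pm\pi/4}$, where accidental $2\times2$ splittings occur). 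A safe substitute: since $M=D_ad_\alpha u_0d_\beta D_a\in\mathrm{Sp}(2,\R)$ its singular values $e^{\pm\beta},e^{\pm\gamma}$ are determined by the two $O(4)$-invariants $\mathrm{tr}(M^tM)$ and $\mathrm{tr}\bigl((M^tM)^2\bigr)$; writing $\sigma=\alpha+\beta$ and $\delta=\alpha-\beta$, a direct block computation gives
\begin{equation*}
\mathrm{tr}(M^tM)=\tfrac12\bigl(e^{4a}+e^{-4a}+2\bigr)\cos^2\sigma+2\sin^2\sigma+e^{2a}+e^{-2a},
\end{equation*}
the $\delta$-dependent terms cancelling, and a similar (longer) computation does the same for the second invariant. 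With that verification supplied, your proof is complete and coincides with the paper's intended argument.
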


The remaining of this section consists mostly of adaptations of some of the results in \cite[\S3]{HaadL}. Recall that $\overline{A^+}=\{D(\beta,\gamma)\ :\ \beta\geq\gamma\geq 0\}$, where $D(\beta,\gamma)$ is defined as in (\ref{Dbetagamma}).

\begin{lem}{\cite[Lemma 3.16]{HaadL}}\label{systeq1}
	Let $\beta\geq\gamma\geq 0$. Then the equations
	\begin{align*}
		\sinh^2(2s)+\sinh^2 (s) &= \sinh^2(\beta)+\sinh^2 (\gamma)\\
		\sinh(2t)\sinh(t) &= \sinh(\beta)\sinh(\gamma)
	\end{align*}
	have unique solutions $s=s(\beta,\gamma)$, $t=t(\beta,\gamma)$ in the interval $[0,\infty)$. Moreover,
	\begin{equation*}
	s\geq\frac{\beta}{4},\quad t\geq\frac{\gamma}{2}.
	\end{equation*}
\end{lem}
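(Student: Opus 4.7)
The plan is to treat the two equations independently, reducing everything to elementary monotonicity statements about hyperbolic functions. I first set $F(s) = \sinh^2(2s) + \sinh^2(s)$ and $G(t) = \sinh(2t)\sinh(t)$ on $[0,\infty)$. Both functions are continuous with $F(0) = G(0) = 0$ and $F(s), G(t) \to \infty$ as $s,t \to \infty$; direct differentiation gives $F'(s) = 2\sinh(4s) + \sinh(2s) > 0$ and $G'(t) = 2\cosh(2t)\sinh(t) + \sinh(2t)\cosh(t) > 0$ for positive argument, so each is a strictly increasing bijection $[0,\infty) \to [0,\infty)$. Since the right-hand sides $\sinh^2\beta + \sinh^2\gamma$ and $\sinh\beta\sinh\gamma$ are nonnegative, the intermediate value theorem together with strict monotonicity produces the unique $s(\beta,\gamma), t(\beta,\gamma) \in [0,\infty)$.

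To establish the lower bound $s \geq \beta/4$, I use monotonicity of $F$ to reduce the claim to $F(\beta/4) \leq \sinh^2\beta + \sinh^2\gamma$. Dropping the nonnegative term $\sinh^2\gamma$, it suffices to show
\[
\sinh^2(\beta/2) + \sinh^2(\beta/4) \leq \sinh^2\beta.
\]
The key ingredient is the doubling identity $\sinh\beta = 2\sinh(\beta/2)\cosh(\beta/2)$, which yields $\sinh^2\beta = 4\sinh^2(\beta/2)\cosh^2(\beta/2)$. Since $\cosh(\beta/2) \geq 1$ and $\sinh$ is nondecreasing on $[0,\infty)$ with $\beta/4 \leq \beta/2$, one has $\sinh^2(\beta/4) \leq \sinh^2(\beta/2)$, so
\[
\sinh^2(\beta/2) + \sinh^2(\beta/4) \leq 2\sinh^2(\beta/2) \leq 4\sinh^2(\beta/2)\cosh^2(\beta/2) = \sinh^2\beta,
\]
as required.

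For $t \geq \gamma/2$ the same monotonicity principle applied to $G$ reduces the task to showing $G(\gamma/2) = \sinh\gamma\,\sinh(\gamma/2) \leq \sinh\beta\sinh\gamma$. If $\gamma = 0$ both sides vanish; otherwise $\sinh\gamma > 0$ and we divide to obtain the equivalent inequality $\sinh(\gamma/2) \leq \sinh\beta$, immediate from $\gamma/2 \leq \gamma \leq \beta$ and monotonicity of $\sinh$.

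The arithmetic is elementary throughout; the only potentially subtle point is spotting the doubling-identity manipulation for $s$. The bound $\beta/4$ is tuned precisely so that the two-step estimate $\sinh^2(\beta/2) + \sinh^2(\beta/4) \leq 2\sinh^2(\beta/2) \leq \sinh^2\beta$ goes through cleanly; chasing a tighter lower bound would require a genuinely finer analysis, which is not needed for the application in the next section.
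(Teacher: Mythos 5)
Your proof is correct. The paper itself gives no argument here --- it simply cites \cite[Lemma 3.16]{HaadL} --- and your elementary treatment (strict monotonicity of $s\mapsto\sinh^2(2s)+\sinh^2(s)$ and $t\mapsto\sinh(2t)\sinh(t)$ for existence and uniqueness, then the comparisons $F(\beta/4)\leq\sinh^2\beta$ via the doubling identity and $G(\gamma/2)\leq\sinh\beta\sinh\gamma$ via $\sinh(\gamma/2)\leq\sinh\beta$) is exactly the kind of monotonicity argument the cited source uses, so nothing further is needed.
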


\begin{lem}{\cite[Lemma 3.19]{HaadL}}\label{systeq2}
	Let $s\geq t\geq 0$. Then the system of equations
	\begin{align*}
		\sinh^2(\beta)+\sinh^2 (\gamma) &= \sinh^2(2s)+\sinh^2 (s)\\
		\sinh(\beta)\sinh(\gamma) &= \sinh(2t)\sinh(t)
	\end{align*}
	has a unique solution $(\beta,\gamma)\in\R^2$ for which $\beta\geq\gamma\geq 0$. Moreover, if $1\leq t\leq s\leq \frac{3t}{2}$, then
	\begin{align*}
		|\beta-2s|&\leq 1,\\
		|\gamma+2s-3t|&\leq 1.
	\end{align*}
\end{lem}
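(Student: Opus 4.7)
My plan is to reduce the system to an algebraic one via the substitution $u=\sinh\beta$ and $v=\sinh\gamma$. Setting
\begin{equation*}
A=\sinh^2(2s)+\sinh^2(s),\qquad B=\sinh(2t)\sinh(t),
\end{equation*}
the system becomes $u^2+v^2=A$, $uv=B$; equivalently $(u+v)^2=A+2B$ and $(u-v)^2=A-2B$. For a solution with $u\geq v\geq 0$ to exist we need $A\geq 2B$, and this follows from AM--GM together with the monotonicity of $\sinh$ on $[0,\infty)$:
\begin{equation*}
\sinh^2(2s)+\sinh^2(s)\geq 2\sinh(2s)\sinh(s)\geq 2\sinh(2t)\sinh(t),
\end{equation*}
since $s\geq t\geq 0$. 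Taking non-negative square roots uniquely determines $u+v$ and $u-v$, hence $u$ and $v$; since $\sinh^{-1}:[0,\infty)\to[0,\infty)$ is a bijection, we recover a unique pair $(\beta,\gamma)$ with $\beta\geq\gamma\geq 0$.

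For the quantitative part, I would combine the elementary bounds $\tfrac12(1-e^{-2x})e^x\leq\sinh(x)\leq\tfrac12 e^x$ for $x\geq 1$ with the identity $\sinh^{-1}(y)=\log(y+\sqrt{1+y^2})$, which yields $|\sinh^{-1}(y)-\log(2y)|\leq Ce^{-2y}$ as $y\to\infty$. Under the hypothesis $1\leq t\leq s\leq \tfrac{3t}{2}$, the exponent $4s$ appearing in $A$ dominates both $2s$ and $3t$ (the latter because $4s\geq 3s\geq 3t$), so
\begin{equation*}
u+v=\sqrt{A+2B}\approx \sinh(2s),\qquad u-v=\sqrt{A-2B}\approx\sinh(2s),
\end{equation*}
from which $u\approx\sinh(2s)$ and therefore $\beta\approx 2s$ after applying $\sinh^{-1}$. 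For $\gamma$, the relation $uv=B$ gives
\begin{equation*}
v\approx\frac{\sinh(2t)\sinh(t)}{\sinh(2s)}\approx\frac{e^{3t-2s}}{2},
\end{equation*}
and since $3t-2s\geq 0$ under $s\leq\tfrac{3t}{2}$, applying $\sinh^{-1}$ gives $\gamma\approx 3t-2s$.

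To upgrade these heuristics into the explicit numerical bounds $|\beta-2s|\leq 1$ and $|\gamma+2s-3t|\leq 1$, I would introduce explicit multiplicative error factors of the form $1\pm O(e^{-c})$ with $c\geq 2$ at each step where $\sinh$ or $\sinh^{-1}$ is approximated by its leading exponential, then propagate these through the square roots in $\sqrt{A\pm 2B}$ and through the final applications of $\log$ or $\sinh^{-1}$. The hypothesis $t\geq 1$ is used repeatedly to guarantee that all intermediate quantities lie in the regime where these asymptotics are valid with manageable constants. The main obstacle will be the second bound: while $\beta$ is governed by the single dominant exponential $e^{2s}$ and so its logarithm is well-controlled, $\gamma$ is determined by the cancellation between $u+v$ and $u-v$, so the error analysis for $v$ (and hence for $\gamma=\sinh^{-1}(v)$) requires noticeably sharper control and is where keeping the numerical bound at $1$ (rather than merely $O(1)$) becomes delicate.
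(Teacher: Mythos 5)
This lemma is imported verbatim from Haagerup--de Laat (\cite[Lemma 3.19]{HaadL}); the paper under review gives no proof of it, so there is nothing internal to compare against, and your attempt has to stand on its own. The first half of your argument does: the substitution $u=\sinh\beta$, $v=\sinh\gamma$, the reduction to $(u+v)^2=A+2B$, $(u-v)^2=A-2B$, the verification $A\geq 2B$ via AM--GM and monotonicity of $\sinh$ on $[0,\infty)$, and the recovery of a unique pair $\beta\geq\gamma\geq 0$ through the bijection $\sinh^{-1}:[0,\infty)\to[0,\infty)$ — all of this is correct and complete, and it is the natural route.

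The quantitative half, however, is a plan rather than a proof: the statements $|\beta-2s|\leq 1$ and $|\gamma+2s-3t|\leq 1$ are precisely about explicit constants, and you never derive them, only assert that error factors ``of the form $1\pm O(e^{-c})$'' could be propagated. That is a genuine gap, though not a fatal one — the strategy does go through. For $\beta$, the crude two-sided bound $\tfrac{1}{2}\sinh(2s)\leq u\leq 2\sinh(2s)$ (from $A\geq\sinh^2(2s)$, $A\leq 2\sinh^2(2s)$ and $0\leq 2B\leq A$) already suffices, since for $x\geq 2$ one checks $\sinh(x-1)\leq\tfrac12\sinh(x)$ and $2\sinh(x)\leq\sinh(x+1)$. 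For $\gamma$, the ``cancellation between $u+v$ and $u-v$'' that you flag as the main obstacle disappears if you rationalize:
\begin{equation*}
\sinh\gamma \;=\; v \;=\; \frac{(A+2B)-(A-2B)}{2\left(\sqrt{A+2B}+\sqrt{A-2B}\right)} \;=\; \frac{2B}{\sqrt{A+2B}+\sqrt{A-2B}},
\end{equation*}
which pins $v$ within an explicit multiplicative factor of $\tfrac12 e^{3t-2s}$ using only the two-sided bounds on $A$ and $B$; combining $\sinh^{-1}(y)\geq\log(2y)$ with $\sinh^{-1}(\lambda e^{c}/2)\leq\log(\lambda e^{c}+1)\leq c+1$ for $\lambda\leq e-1$ and $c=3t-2s\geq 0$ then yields $|\gamma-(3t-2s)|\leq 1$. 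Until these (or equivalent) explicit estimates are written down, the second assertion of the lemma is not established.
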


\begin{lem}\label{ineqeps1}
	There exists a constant $C_3>0$ such that whenever $\beta\geq\gamma\geq 0$ and $s=s(\beta,\gamma)$ is chosen as in Lemma \ref{systeq1}, then
	\begin{equation*}
	|\tilde{\varphi}(D(\beta,\gamma))-\tilde{\varphi}(D(2s,s))|\leq C_3e^{-\frac{\beta-\gamma}{4p}}\|\varphi\|_{M_{p\text{-}cb}(G)},
	\end{equation*}
	for every $\varphi\in M_{p\text{-}cb}(G)$.
\end{lem}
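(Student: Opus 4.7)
The plan is to realise both $D(\beta,\gamma)$ and $D(2s,s)$ as the $KAK$-images of elements of the form $D_a' d_\theta v D_a'$ for a common value of $a$, and then to combine two applications of Lemma \ref{DadthvDa} via the triangle inequality so that the common reference term $\tilde\varphi(D_{2a}')$ cancels. This reduces the problem to bounding the two deviations $|\theta_j - \pi/4|$ by an exponential factor in $\beta-\gamma$.

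First, I would analyse the $KAK$-class of $D_a' d_\theta v D_a'$. Since $d_\theta v$ is diagonal in $\mathrm{U}(2)$, namely $\mathrm{diag}(e^{\ii(\theta+\pi/4)}, e^{\ii(\pi/4-\theta)})$, its image in $M_4(\R)$ decomposes into two independent $2\times 2$ blocks in $\mathrm{SL}(2,\R)$, and this block structure is preserved by conjugation with $D_a' = \mathrm{diag}(e^a, e^a, e^{-a}, e^{-a})$. A direct singular-value computation on each block should give
\[
\sinh(\mu_1) = \sinh(2a)\,|\cos(\theta+\tfrac{\pi}{4})|, \qquad \sinh(\mu_2) = \sinh(2a)\,|\cos(\tfrac{\pi}{4}-\theta)|,
\]
so the $KAK$-parameters $(\mu_1,\mu_2)$ satisfy $\sinh^2(\mu_1) + \sinh^2(\mu_2) = \sinh^2(2a)$ independently of $\theta$. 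With this parametrisation in hand, I would set $a$ so that $\sinh(2a) = \sqrt{\sinh^2\beta + \sinh^2\gamma}$; by the first defining equation of $s$ in Lemma \ref{systeq1} this quantity also equals $\sqrt{\sinh^2(2s) + \sinh^2(s)}$. Hence both $(\beta, \gamma)$ and $(2s, s)$ lie on the curve parametrised by $\theta$, so there exist $\theta_1, \theta_2$ (each close to $\pi/4$) with $D(\beta,\gamma) \in KD_a' d_{\theta_1} v D_a' K$ and $D(2s,s) \in KD_a' d_{\theta_2} v D_a' K$. Two applications of Lemma \ref{DadthvDa} and the triangle inequality then yield
\[
|\tilde\varphi(D(\beta,\gamma)) - \tilde\varphi(D(2s,s))| \leq C_1 \bigl(|\theta_1 - \tfrac{\pi}{4}|^{1/p} + |\theta_2 - \tfrac{\pi}{4}|^{1/p}\bigr)\|\varphi\|_{M_{p\text{-}cb}(G)}.
\]

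Finally, I would estimate $|\theta_j - \pi/4|$ from the parametrisation above. The deviation from $\pi/4$ is, to leading order, the ratio of the smaller to the larger $\sinh(\mu_j)$, which gives $|\theta_1 - \pi/4| \lesssim \sinh\gamma/\sinh\beta \lesssim e^{-(\beta-\gamma)}$ and $|\theta_2 - \pi/4| \lesssim \sinh(s)/\sinh(2s) = 1/(2\cosh s) \leq e^{-s}$. Using the bound $s \geq \beta/4 \geq (\beta-\gamma)/4$ from Lemma \ref{systeq1}, the dominant second contribution is at most a constant multiple of $e^{-(\beta-\gamma)/(4p)}$, yielding the claimed estimate.

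The main obstacle is to ensure that $\theta_1, \theta_2 \in [\pi/6, \pi/3]$ so that Lemma \ref{DadthvDa} actually applies. The exponential decay above forces $\theta_j$ to be arbitrarily close to $\pi/4$ whenever $\beta-\gamma$ (and hence $s$) exceeds some absolute threshold, which settles that regime. On the bounded complementary region the factor $e^{-(\beta-\gamma)/(4p)}$ stays uniformly bounded below, so the inequality follows trivially from the crude estimate $|\tilde\varphi| \leq \|\varphi\|_{M_{p\text{-}cb}(G)}$ by choosing $C_3$ sufficiently large.
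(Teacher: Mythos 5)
Your proposal is correct and follows essentially the same route as the paper: both $D(\beta,\gamma)$ and $D(2s,s)$ are realised as the $K\overline{A^+}K$-classes of $D_a'd_{\theta_i}vD_a'$ for a common $a$, Lemma \ref{DadthvDa} is applied twice so that the reference term cancels, and the deviations $|\theta_i-\pi/4|$ are bounded using $s\geq\beta/4$ from Lemma \ref{systeq1}, with the trivial bound handling the region where $\beta-\gamma$ is small. The only difference is that where the paper quotes the proof of Lemma 3.17 of Haagerup--de Laat for the existence of the parameters and the bound $r_i\leq 2e^{(\gamma-\beta)/4}$, you re-derive the same input self-containedly via the block singular-value identity $\sinh\mu_j=\sinh(2a)|\cos\phi_j|$, which checks out.
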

\begin{proof}
	First assume that $\beta-\gamma\geq 8$. Then by (the proof of) \cite[Lemma 3.17]{HaadL}, there exist $r_1,r_2\in\left[0,\frac{1}{2}\right]$ and $w_1,w_2\in\text{SU}(2)$ given by
	\begin{equation*}
	w_i=\left(\begin{matrix} a_i+\ii b_i & 0\\ 0 & a_i-\ii b_i\end{matrix}\right),\quad i=1,2,
	\end{equation*}
	where $a_i=\left(\dfrac{1+r_i}{2}\right)^{\frac{1}{2}}$ and $b_i=\left(\dfrac{1-r_i}{2}\right)^{\frac{1}{2}}$, such that
	\begin{equation*}
	D_a'w_1vD_a'\in KD(\beta,\gamma)K,\quad D_a'w_2vD_a'\in KD(2s,s)K,
	\end{equation*}
	for some $a>0$. Observe that $w_i=d_{\theta_i}$ with $\theta_i=\arctan\left(\frac{b_i}{a_i}\right)$, and 
	\begin{equation*}
	(\tan\theta_i)^2=\frac{1-r_i}{1+r_i}\in\left[\tfrac{1}{3},1\right]
	\end{equation*}
	because $0\leq r_i\leq\frac{1}{2}$. This implies that $\theta_i\in\left[\frac{\pi}{6},\frac{\pi}{4}\right]$, so by Lemma \ref{DadthvDa}, 
	\begin{equation*}
	|\tilde{\varphi}(D_{a}'w_i vD_{a}')-\tilde{\varphi}(D_{2a}')|\leq C_1\left|\theta_i-\tfrac{\pi}{4}\right|^{\frac{1}{p}}\|\varphi\|_{M_{p\text{-}cb}(G)},
	\end{equation*}
	for every $\varphi\in M_{p\text{-}cb}(G)$. Using the inequality $|\arctan(x)-\arctan(y)|\leq |x-y|$, we obtain
	\begin{equation*}
	\left|\theta_i-\tfrac{\pi}{4}\right|\leq \left|\tfrac{b_i}{a_i}-1\right|=1-\left(\frac{1-r_i}{1+r_i}\right)^{\frac{1}{2}}\leq 2r_i.
	\end{equation*}
	The last inequality can be justified as follows:
	\begin{align*}
	1-2r_i\leq \left(\frac{1-r_i}{1+r_i}\right)^{\frac{1}{2}} &\iff  1-4r_i+4r_i^2\leq\frac{1-r_i}{1+r_i}\\
	&\iff  1-4r_i+4r_i^2+r_i-4r_i^2+4r_i^3\leq 1-r_i\\
	&\iff  4r_i^3\leq 2r_i \iff r_i^2\leq\frac{1}{2}.
	\end{align*}
	Therefore, since $\tilde{\varphi}(D(\beta,\gamma))=\tilde{\varphi}(D_{a}'w_1 vD_{a}')$ and $\tilde{\varphi}(D(2s,s))=\tilde{\varphi}(D_{a}'w_2 vD_{a}')$,
	\begin{align*}
	|\tilde{\varphi}(D(\beta,\gamma))-\tilde{\varphi}(D(2s,s))|
	&\leq  |\tilde{\varphi}(D_{a}'w_1 vD_{a}')-\tilde{\varphi}(D_{2a}')|\\
	&\qquad + |\tilde{\varphi}(D_{a}'w_2 vD_{a}')-\tilde{\varphi}(D_{2a}')|\\
	&\leq  C_1\left(\left(2r_1\right)^{\frac{1}{p}}+\left(2r_2\right)^{\frac{1}{p}}\right)\|\varphi\|_{M_{p\text{-}cb}(G)}.
	\end{align*}
	Again by the proof of \cite[Lemma 3.17]{HaadL}, $r_1,r_2\leq 2e^{\frac{\gamma-\beta}{4}}$. So
	\begin{equation*}
	C_1\left(\left(2r_1\right)^{\frac{1}{p}}+\left(2r_2\right)^{\frac{1}{p}}\right)\leq C e^{\frac{\gamma-\beta}{4p}},
	\end{equation*}
	where $C>0$ depends only on $p$. Finally, if $\beta-\gamma\leq 8$, then $\frac{2}{p}+\frac{\gamma-\beta}{4p}\geq 0$ and 
	\begin{align*}
		|\tilde{\varphi}(D(\beta,\gamma))-\tilde{\varphi}(D(2s,s))| &\leq 2 \|\tilde{\varphi}\|_{\infty}\\
		&\leq 2 \|\tilde{\varphi}\|_{M_{p\text{-}cb}(K)}\\
		&\leq 2 \|\varphi\|_{M_{p\text{-}cb}(G)}\\
		&\leq C' e^{\frac{\gamma-\beta}{4p}}\|\varphi\|_{M_{p\text{-}cb}(G)},
	\end{align*}
	with $C'=2e^{\frac{2}{p}}$. Hence, the result follows with $C_3=\max\{C,C'\}$.
\end{proof}

\begin{lem}\label{ineqeps2}
	There exists a constant $C_4>0$ such that whenever $\beta\geq\gamma\geq 0$ and $t=t(\beta,\gamma)$ is chosen as in Lemma \ref{systeq1}, then
	\begin{equation*}
	|\tilde{\varphi}(D(\beta,\gamma))-\tilde{\varphi}(D(2t,t))|\leq C_4e^{-\frac{\gamma}{4p}}\|\varphi\|_{M_{p\text{-}cb}(G)},
	\end{equation*}
	for every $\varphi\in M_{p\text{-}cb}(G)$.
\end{lem}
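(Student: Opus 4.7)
The plan is to mirror the proof of Lemma \ref{ineqeps1}, with the family $\{d_\theta v\}$ replaced by the family $\{u_\theta\}$, so that Lemma \ref{DauthDa} plays the role that Lemma \ref{DadthvDa} played there. The H\"older exponent $\frac{1}{2p}$ in Lemma \ref{DauthDa} is weaker than the $\frac{1}{p}$ in Lemma \ref{DadthvDa}, but this loss will be absorbed by showing that the two relevant angles are much closer to each other in this second situation: roughly $O(e^{-\gamma/2})$ instead of $O(e^{(\gamma-\beta)/4})$.

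First I would treat the case where $\gamma$ is large, say $\gamma \geq C_0$ for some absolute constant $C_0 > 0$ to be fixed. In this regime, the strategy is to invoke the analogue of \cite[Lemma 3.17]{HaadL} corresponding to $t(\beta,\gamma)$ rather than $s(\beta,\gamma)$: concretely, one produces $a > 0$ and $\theta_1, \theta_2 \in \R$ such that
\begin{equation*}
D_a u_{\theta_1} D_a \in K D(\beta,\gamma) K, \qquad D_a u_{\theta_2} D_a \in K D(2t,t) K,
\end{equation*}
together with a uniform estimate $|\theta_1 - \theta_2| \leq C e^{-\gamma/2}$. This should be a routine (if tedious) calculation in the $KAK$ decomposition of $\text{Sp}(2,\R)$, using the eigenvalue system of Lemma \ref{systeq1} and the explicit form of $u_\theta$: the quantities $\sinh(2t)\sinh(t)$ and $\sinh(\beta)\sinh(\gamma)$ agree by construction, and the remaining freedom in $\theta$ is controlled by the hyperbolic identities that compare $\sinh^2(\beta)+\sinh^2(\gamma)$ to $\sinh^2(2t)+\sinh^2(t)$ up to an error of order $e^{-\gamma}$. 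Once this parametrization is in hand, Lemma \ref{DauthDa} gives
\begin{equation*}
|\tilde\varphi(D(\beta,\gamma))-\tilde\varphi(D(2t,t))| \leq C_2\bigl(Ce^{-\gamma/2}\bigr)^{\frac{1}{2p}} \|\varphi\|_{M_{p\text{-}cb}(G)},
\end{equation*}
which is the desired inequality after adjusting constants.

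The remaining case $\gamma < C_0$ is handled by the trivial bound
\begin{equation*}
|\tilde\varphi(D(\beta,\gamma))-\tilde\varphi(D(2t,t))| \leq 2\|\tilde\varphi\|_\infty \leq 2\|\varphi\|_{M_{p\text{-}cb}(G)},
\end{equation*}
together with $e^{-\gamma/(4p)} \geq e^{-C_0/(4p)}$; taking $C_4$ at least $2e^{C_0/(4p)}$ covers this range. Combining the two cases yields the stated inequality.

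The main obstacle is the first step, namely producing the angles $\theta_1, \theta_2$ with the quantitative closeness $|\theta_1-\theta_2|\leq Ce^{-\gamma/2}$. Everything else is a direct transcription of the argument for Lemma \ref{ineqeps1}. Since this parametrization is the symplectic analogue of what is established in \cite[Lemma 3.17]{HaadL} for the other direction in the Weyl chamber, I expect it to follow from essentially the same computation as in \cite{HaadL}, and this is presumably one of the ``details which can be easily filled in'' alluded to in the statement of Theorem \ref{Sp2pAP}.
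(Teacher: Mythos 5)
Your proposal is correct and follows essentially the same route as the paper: reduce to the case of large $\gamma$, realize $D(\beta,\gamma)$ and $D(2t,t)$ as $KD_au_{\theta_1}D_aK$ and $KD_au_{\theta_2}D_aK$ with $|\theta_1-\theta_2|\leq e^{-\gamma/2}$, and apply Lemma \ref{DauthDa}; the one step you flag as an obstacle is exactly what the paper imports from the proof of \cite[Lemma 3.18]{HaadL} (not an analogue of Lemma 3.17, which is the reference used for Lemma \ref{ineqeps1}), and the paper uses the threshold $\gamma\geq 2$ with $C'=2e^{\frac{1}{2p}}$ for the small-$\gamma$ case.
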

\begin{proof}
	Assume first that $\gamma\geq 2$. Then the proof of \cite[Lemma 3.18]{HaadL} gives the existence of $\theta_1,\theta_2\in\R$ such that $|\theta_1-\theta_2|\leq e^{-\frac{\gamma}{2}}$ and
	\begin{equation*}
	D_au_{\theta_1}D_a\in KD(\beta,\gamma)K,\quad D_au_{\theta_2}D_a\in KD(2t,t)K,
	\end{equation*}
	for some $a>0$. Then, using Lemma \ref{DauthDa} we get 
	\begin{align*}
	|\tilde{\varphi}(D(\beta,\gamma))-\tilde{\varphi}(D(2t,t))| &= |\tilde{\varphi}(D_au_{\theta_1}D_a)-\tilde{\varphi}(D_au_{\theta_2}D_a)|\\
	&\leq  C_2\left|\theta_1-\theta_2\right|^{\frac{1}{2p}}\|\varphi\|_{M_{p\text{-}cb}(G)}\\
	&\leq  C e^{-\frac{\gamma}{4p}}\|\varphi\|_{M_{p\text{-}cb}(G)}.
	\end{align*}
	for every $\varphi\in M_{p\text{-}cb}(G)$, with $C>0$ depending only on $p$. Finally, if $\gamma< 2$, then
	\begin{equation*}
	|\tilde{\varphi}(D(\beta,\gamma))-\tilde{\varphi}(D(2t,t))|  \leq 2 \|\tilde{\varphi}\|_{\infty}\leq 2 \|\varphi\|_{M_{p\text{-}cb}(G)}\leq C' e^{-\frac{\gamma}{4p}}\|\varphi\|_{M_{p\text{-}cb}(G)},
	\end{equation*}
	with $C'=2e^{\frac{1}{2p}}$. The result follows with $C_4=\max\{C,C'\}$.
\end{proof}

\begin{lem}\label{propC5}
	There exists a constant $C_5>0$ such that whenever $s,t\geq 0$ satisfy $2\leq t\leq s\leq\frac{6}{5}t$,
	\begin{equation*}
	|\tilde{\varphi}(D(2s,s))-\tilde{\varphi}(D(2t,t))|\leq C_5e^{-\frac{s}{8p}}\|\varphi\|_{M_{p\text{-}cb}(G)},
	\end{equation*}
	for every $\varphi\in M_{p\text{-}cb}(G)$.
\end{lem}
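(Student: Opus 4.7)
The plan is to view $(s,t)$ as produced from some pair $(\beta,\gamma)\in\overline{A^+}$ via the system of equations of Lemma \ref{systeq1}, use Lemmas \ref{ineqeps1} and \ref{ineqeps2} to compare $\tilde{\varphi}(D(\beta,\gamma))$ with both $\tilde{\varphi}(D(2s,s))$ and $\tilde{\varphi}(D(2t,t))$, and then control the resulting exponents $\beta-\gamma$ and $\gamma$ via the estimates in Lemma \ref{systeq2}.

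More precisely, given $s\geq t\geq 0$ as in the statement, I would first invoke Lemma \ref{systeq2} to obtain a unique pair $(\beta,\gamma)\in\R^2$ with $\beta\geq\gamma\geq 0$ satisfying
\begin{align*}
\sinh^2(\beta)+\sinh^2(\gamma) &= \sinh^2(2s)+\sinh^2(s),\\
\sinh(\beta)\sinh(\gamma) &= \sinh(2t)\sinh(t).
\end{align*}
By the uniqueness statement of Lemma \ref{systeq1}, this pair satisfies $s(\beta,\gamma)=s$ and $t(\beta,\gamma)=t$. Then applying Lemmas \ref{ineqeps1} and \ref{ineqeps2} to $(\beta,\gamma)$ and using the triangle inequality yields
\begin{equation*}
|\tilde{\varphi}(D(2s,s))-\tilde{\varphi}(D(2t,t))|\leq \bigl(C_3 e^{-\frac{\beta-\gamma}{4p}}+C_4 e^{-\frac{\gamma}{4p}}\bigr)\|\varphi\|_{M_{p\text{-}cb}(G)}.
\end{equation*}

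Next I would use the quantitative part of Lemma \ref{systeq2}: since the hypothesis $2\leq t\leq s\leq \tfrac{6}{5}t$ implies $1\leq t\leq s\leq\tfrac{3}{2}t$, we have $|\beta-2s|\leq 1$ and $|\gamma-(3t-2s)|\leq 1$. Combined with $s\geq t$ and $t\geq \tfrac{5s}{6}$, this gives
\begin{equation*}
\beta-\gamma\geq 4s-3t-2\geq s-2,\qquad \gamma\geq 3t-2s-1\geq \tfrac{s}{2}-1.
\end{equation*}
Substituting into the previous display yields
\begin{equation*}
|\tilde{\varphi}(D(2s,s))-\tilde{\varphi}(D(2t,t))|\leq \bigl(C_3 e^{\frac{1}{2p}}e^{-\frac{s}{4p}}+C_4 e^{\frac{1}{4p}}e^{-\frac{s}{8p}}\bigr)\|\varphi\|_{M_{p\text{-}cb}(G)},
\end{equation*}
and since $e^{-s/(4p)}\leq e^{-s/(8p)}$, the conclusion follows with $C_5=C_3 e^{\frac{1}{2p}}+C_4 e^{\frac{1}{4p}}$.

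The only mildly delicate point is checking the arithmetic that forces both $\beta-\gamma$ and $\gamma$ to grow linearly in $s$ under the constraint $t\leq s\leq \tfrac{6}{5}t$; the lower bound on $t$ in the hypothesis is precisely what is needed to absorb the additive constants coming from Lemma \ref{systeq2} and to ensure $\gamma\geq \tfrac{s}{2}-1$ rather than something vacuous. Everything else is a direct combination of the two approximation lemmas via the inverse pair of systems \ref{systeq1}--\ref{systeq2}.
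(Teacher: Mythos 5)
Your proof is correct and follows essentially the same route as the paper: invert the system via Lemma \ref{systeq2}, compare both $\tilde{\varphi}(D(2s,s))$ and $\tilde{\varphi}(D(2t,t))$ to $\tilde{\varphi}(D(\beta,\gamma))$ using Lemmas \ref{ineqeps1} and \ref{ineqeps2}, and bound the exponents from below. The only (cosmetic) difference is that the paper cites an external lemma for the bound $\min\{\gamma,\beta-\gamma\}\geq \tfrac{s}{2}-1$, whereas you derive the needed lower bounds $\beta-\gamma\geq s-2$ and $\gamma\geq\tfrac{s}{2}-1$ directly from the quantitative estimates of Lemma \ref{systeq2}, which makes your version slightly more self-contained; the arithmetic checks out.
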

\begin{proof}
	Take $\beta\geq\gamma\geq 0$ given by Lemma \ref{systeq2}. Then, by Propositions \ref{ineqeps1} and \ref{ineqeps2},
	\begin{equation*}
	|\tilde{\varphi}(D(\beta,\gamma))-\tilde{\varphi}(D(2s,s))|\leq C_3e^{-\frac{\beta-\gamma}{4p}}\|\varphi\|_{M_{p\text{-}cb}(G)},
	\end{equation*}
	\begin{equation*}
	|\tilde{\varphi}(D(\beta,\gamma))-\tilde{\varphi}(D(2t,t))|\leq C_4e^{-\frac{\gamma}{4p}}\|\varphi\|_{M_{p\text{-}cb}(G)}.
	\end{equation*}
	As shown in \cite[Lemma 3.20]{HaadL}, $\min\{\gamma,\beta-\gamma\}\geq \frac{s}{2}-1$. Thus the result follows by the triangle inequality with $C_5=e^{\frac{1}{4p}}(C_3+C_4)$.
\end{proof}

\begin{lem}\label{propC6}
	There exists a constant $C_6>0$ such that whenever $s\geq t\geq 0$,
	\begin{equation*}
	|\tilde{\varphi}(D(2s,s))-\tilde{\varphi}(D(2t,t))|\leq C_6e^{-\frac{t}{8p}}\|\varphi\|_{M_{p\text{-}cb}(G)}, 
	\end{equation*}
	for every $\varphi\in M_{p\text{-}cb}(G)$.
\end{lem}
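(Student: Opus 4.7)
The plan is to deduce Lemma \ref{propC6} from Lemma \ref{propC5} by a telescoping argument along a finite geometric chain in the parameter, so that each consecutive pair satisfies the hypothesis $2 \leq t \leq s \leq \tfrac{6}{5}t$ of Lemma \ref{propC5}. Since $t$ is the smaller of the two parameters, the target $e^{-t/(8p)}$ is the weakest exponential, and it will come from the very first step of the chain; the remaining steps will give geometrically smaller errors that sum up into a $p$-dependent constant.

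First I would fix $s \geq t \geq 2$ (the range where Lemma \ref{propC5} is directly applicable) and let $N$ be the smallest nonnegative integer with $(6/5)^N t \geq s$. Setting $\tau_i = (6/5)^i t$ for $0 \leq i \leq N-1$ and $\tau_N = s$, we have $\tau_0 = t$, $\tau_N = s$, $\tau_i \geq t \geq 2$ for every $i$, and $\tau_{i+1}/\tau_i \leq 6/5$. Thus Lemma \ref{propC5} applies to each consecutive pair and yields
\begin{equation*}
|\tilde{\varphi}(D(2\tau_{i+1},\tau_{i+1})) - \tilde{\varphi}(D(2\tau_i,\tau_i))| \leq C_5\, e^{-\tau_{i+1}/(8p)} \|\varphi\|_{M_{p\text{-}cb}(G)}.
\end{equation*}
Summing this telescoping series and using Bernoulli's inequality $(6/5)^j \geq 1 + j/5$ together with $\tau_N \geq \tau_{N-1} = (6/5)^{N-1} t$ to control the last term, one obtains
\begin{equation*}
\sum_{i=0}^{N-1} e^{-\tau_{i+1}/(8p)} \leq 2 \sum_{j=1}^{\infty} e^{-(1+j/5)t/(8p)} = \frac{2\, e^{-t/(8p)} e^{-t/(40p)}}{1 - e^{-t/(40p)}} \leq \frac{2\, e^{-1/(20p)}}{1 - e^{-1/(20p)}}\, e^{-t/(8p)},
\end{equation*}
where in the last step I used $t \geq 2$. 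The resulting constant depends only on $p$.

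For the remaining range $0 \leq t < 2$ I would simply use the contractive inclusion $M_{p\text{-}cb}(G) \hookrightarrow C_b(G)$ and the inequality $e^{-t/(8p)} \geq e^{-1/(4p)}$ to get
\begin{equation*}
|\tilde{\varphi}(D(2s,s)) - \tilde{\varphi}(D(2t,t))| \leq 2 \|\varphi\|_{M_{p\text{-}cb}(G)} \leq 2\, e^{1/(4p)}\, e^{-t/(8p)} \|\varphi\|_{M_{p\text{-}cb}(G)}.
\end{equation*}
Taking $C_6$ to be the maximum of the two constants produced above finishes the proof. I do not anticipate any serious obstacle: the only thing to check carefully is that the geometric ratio $6/5$ keeps every $\tau_i$ in the admissible range $[2,\infty)$ (which is automatic once $t \geq 2$) and that the Bernoulli-dominated tail is summable uniformly for $t$ in that range.
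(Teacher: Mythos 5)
Your proof is correct and follows essentially the same route as the paper's: telescope Lemma \ref{propC5} along a chain from $t$ to $s$ (the paper uses unit arithmetic steps, hence needs $t\ge 5$ for the main case, while your geometric ratio $6/5$ only needs $t\ge 2$), sum the resulting series into a constant depending only on $p$ times $e^{-t/(8p)}$, and dispose of small $t$ by the trivial bound $2\|\varphi\|_{M_{p\text{-}cb}(G)}$. One cosmetic remark: the displayed comparison $\sum_{i=0}^{N-1}e^{-\tau_{i+1}/(8p)}\le 2\sum_{j\ge 1}e^{-(1+j/5)t/(8p)}$ can fail in the edge case $N=1$ with $t$ large (the right-hand side is then $o\bigl(e^{-t/(8p)}\bigr)$ while the left-hand side is a single term close to $e^{-t/(8p)}$), but since that term is bounded by $e^{-t/(8p)}$ and your final constant exceeds $1$, the conclusion is unaffected.
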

\begin{proof}
	Assume first that $t\geq 5$. Write $s=t+n+\delta$ where $n\geq 0$ is an integer and $\delta\in[0,1)$. Then, by Lemma \ref{propC5}, for every $j\in\{0,1,...,n-1\}$, 
	\begin{equation*}
	|\tilde{\varphi}(D(2(t+j+1),t+j+1))-\tilde{\varphi}(D(2(t+j),t+j))|\leq C_5e^{-\frac{t+j+1}{8p}}\|\varphi\|_{M_{p\text{-}cb}(G)}.
	\end{equation*}
	And
	\begin{equation*}
	|\tilde{\varphi}(D(2s,s))-\tilde{\varphi}(D(2(t+n),t+n))|\leq C_5e^{-\frac{s}{8p}}\|\varphi\|_{M_{p\text{-}cb}(G)}.
	\end{equation*}
	Thus
	\begin{align*}
	|\tilde{\varphi}(D(2s,s))-\tilde{\varphi}(D(2t,t))| &\leq  C_5\left(\sum_{j=0}^n e^{-\frac{t+j}{8p}}\right)\|\varphi\|_{M_{p\text{-}cb}(G)}\\
	&\leq  C e^{-\frac{t}{8p}}\|\varphi\|_{M_{p\text{-}cb}(G)},
	\end{align*}
	with $C>0$ depending only on $p$. Finally, if $t<5$, then
	\begin{equation*}
	|\tilde{\varphi}(D(2s,s))-\tilde{\varphi}(D(2t,t))|\leq 2\|\tilde{\varphi}\|_{\infty}\leq 2\|\varphi\|_{M_{p\text{-}cb}(G)}.
	\end{equation*}
	The result follows by taking $C_6=\max\{C,2e^{\frac{5}{8p}}\}$.
\end{proof}

\begin{lem}\label{propC7}
	There exists a constant $C_7>0$ such that for all $\beta_1\geq\gamma_1\geq 0$, $\beta_2\geq\gamma_2\geq 0$ with $\beta_1\leq\beta_2$,
	\begin{equation*}
	|\tilde{\varphi}(D(\beta_1,\gamma_1))-\tilde{\varphi}(D(\beta_2,\gamma_2))|\leq C_7e^{-\frac{\beta_1}{32p}}\|\varphi\|_{M_{p\text{-}cb}(G)}, 
	\end{equation*}
	for every $\varphi\in M_{p\text{-}cb}(G)$.
\end{lem}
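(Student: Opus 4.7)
The idea is to reduce the general two-parameter estimate to the ``diagonal'' estimate of Lemma \ref{propC6} by approximating each $D(\beta_i,\gamma_i)$ by an element $D(2\sigma_i,\sigma_i)$ with $\sigma_i$ controlled from below by $\beta_i/4$. The rate $e^{-\beta_1/(32p)}$ will come from combining a rate of $e^{-\beta/(8p)}$ obtained in this reduction with the rate $e^{-\sigma/(8p)}$ of Lemma \ref{propC6}, the further factor of $1/4$ entering precisely because $\sigma\geq\beta/4$.

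The preparatory step I would record is: for any $\beta\geq\gamma\geq 0$ there exists $\sigma\geq\beta/4$ such that
\begin{equation*}
\left|\tilde{\varphi}(D(\beta,\gamma))-\tilde{\varphi}(D(2\sigma,\sigma))\right|\leq \max\{C_3,C_4\}\,e^{-\beta/(8p)}\|\varphi\|_{M_{p\text{-}cb}(G)},
\end{equation*}
for every $\varphi\in M_{p\text{-}cb}(G)$. To prove it I would split into two cases according to the value of $\gamma$ relative to $\beta/2$. If $\gamma\geq\beta/2$, then Lemma \ref{ineqeps2} applies with $\sigma:=t(\beta,\gamma)\geq\gamma/2\geq\beta/4$, and the decay $e^{-\gamma/(4p)}$ is bounded by $e^{-\beta/(8p)}$. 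Otherwise $\beta-\gamma>\beta/2$, and Lemma \ref{ineqeps1} applies with $\sigma:=s(\beta,\gamma)\geq\beta/4$, giving the decay $e^{-(\beta-\gamma)/(4p)}\leq e^{-\beta/(8p)}$.

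Applying this preparatory step to both $(\beta_1,\gamma_1)$ and $(\beta_2,\gamma_2)$ furnishes $\sigma_1,\sigma_2$ with $\sigma_i\geq\beta_i/4\geq\beta_1/4$, where the second inequality uses $\beta_1\leq\beta_2$. Lemma \ref{propC6} then yields
\begin{equation*}
\left|\tilde{\varphi}(D(2\sigma_1,\sigma_1))-\tilde{\varphi}(D(2\sigma_2,\sigma_2))\right|\leq C_6\,e^{-\min(\sigma_1,\sigma_2)/(8p)}\|\varphi\|_{M_{p\text{-}cb}(G)}\leq C_6\,e^{-\beta_1/(32p)}\|\varphi\|_{M_{p\text{-}cb}(G)}.
\end{equation*}
A final triangle inequality, together with the trivial bounds $e^{-\beta_i/(8p)}\leq e^{-\beta_1/(8p)}\leq e^{-\beta_1/(32p)}$ (the first from $\beta_2\geq\beta_1$, the second from $\beta_1\geq 0$), closes the argument with $C_7=2\max\{C_3,C_4\}+C_6$.

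Since all the necessary inputs are already available, no step stands out as a serious obstacle; the only delicate point is choosing the dichotomy threshold $\gamma=\beta/2$ so that in both cases the rate from Lemma \ref{ineqeps1} or Lemma \ref{ineqeps2} is at least $e^{-\beta/(8p)}$, after which the factor $1/4$ lost when passing from $\beta_i$ to the common lower bound $\beta_1/4$ for the diagonal parameters $\sigma_i$ explains the exponent $\beta_1/(32p)$ appearing in the statement.
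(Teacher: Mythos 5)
Your proposal is correct and follows essentially the same route as the paper: the same dichotomy at $\gamma=\beta/2$ to invoke Lemma \ref{ineqeps1} or Lemma \ref{ineqeps2} with rate $e^{-\beta_i/(8p)}$ and a diagonal point $D(2\sigma_i,\sigma_i)$ with $\sigma_i\geq\beta_i/4$, followed by Lemma \ref{propC6} and the triangle inequality, yielding the same constant $C_7=2\max\{C_3,C_4\}+C_6$. No gaps.
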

\begin{proof}
	If $\beta_i\geq 2\gamma_i$, then $\beta_i-\gamma_i\geq \frac{\beta_i}{2}$, so by Lemma \ref{ineqeps1},
	\begin{equation*}
	|\tilde{\varphi}(D(\beta_i,\gamma_i))-\tilde{\varphi}(D(2s_i,s_i))|\leq C_3e^{-\frac{\beta_i}{8p}}\|\varphi\|_{M_{p\text{-}cb}(G)},
	\end{equation*}
	with $s_i\geq\frac{\beta_i}{4}$. If $\beta_i<2\gamma_i$, by Lemma \ref{ineqeps2},
	\begin{equation*}
	|\tilde{\varphi}(D(\beta_i,\gamma_i))-\tilde{\varphi}(D(2t_i,t_i))|\leq C_4e^{-\frac{\beta_i}{8p}}\|\varphi\|_{M_{p\text{-}cb}(G)},
	\end{equation*}
	with $t_i\geq\frac{\gamma_i}{2}\geq\frac{\beta_i}{4}$. In both cases there exists $r_i\geq\frac{\beta_i}{4}$ such that
	\begin{equation*}
	|\tilde{\varphi}(D(\beta_i,\gamma_i))-\tilde{\varphi}(D(2r_i,r_i))|\leq Ce^{-\frac{\beta_i}{8p}}\|\varphi\|_{M_{p\text{-}cb}(G)},
	\end{equation*}
	with $C=\max\{C_3,C_4\}$. Moreover, by Lemma \ref{propC6},
	\begin{equation*}
	|\tilde{\varphi}(D(2r_1,r_1))-\tilde{\varphi}(D(2r_2,r_2))|\leq C_6e^{-\frac{r}{8p}}\|\varphi\|_{M_{p\text{-}cb}(G)},
	\end{equation*}
	with $r=\min\{r_1,r_2\}$. Observe that $r\geq\min\{\frac{\beta_1}{4},\frac{\beta_2}{4}\}=\frac{\beta_1}{4}$. Putting everything together we get
	\begin{equation*}
	|\tilde{\varphi}(D(\beta_1,\gamma_1))-\tilde{\varphi}(D(\beta_2,\gamma_2))|\leq C_7e^{-\frac{\beta_1}{32p}}\|\varphi\|_{M_{p\text{-}cb}(G)},
	\end{equation*}
	with $C_7=2C+C_6$.
\end{proof}

Now we are in position of proving Theorem \ref{Sp2pAP} in the same way as Theorem \ref{SL3pAP}. Define the functions $\beta,\gamma:G\to[0,\infty)$ by 
\begin{equation*}
g\in KD(\beta(g),\gamma(g))K,\quad \beta(g)\geq\gamma(g)\geq 0,\quad\forall g\in G,
\end{equation*}
and the family of measures $m_g$ as in (\ref{defmg}). Then Lemma \ref{propC7} can be stated as follows.

\begin{cor}\label{mgcauchy2}
	There exists a constant $C>0$ such that, for all $g,g'\in G$ with $\beta(g)\leq\beta(g')$,
	\begin{equation*}
	\left\|m_{g}-m_{g'}\right\|_{M_{p\text{-}cb}(G)^*}\leq C e^{-\frac{\beta(g)}{32p}}.
	\end{equation*}
\end{cor}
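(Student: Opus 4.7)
The plan is to observe that Corollary \ref{mgcauchy2} is essentially a reformulation of Lemma \ref{propC7} in the language of the measures $m_g$. By definition of $m_g$ in \eqref{defmg} and of $\tilde{\varphi}$ in \eqref{avphi}, for any $\varphi\in M_{p\text{-}cb}(G)$ we have
\begin{equation*}
\langle m_g,\varphi\rangle=\int_K\int_K\varphi(kgk')\,dk\,dk'=\tilde{\varphi}(g).
\end{equation*}
Since $\tilde{\varphi}$ is $K$-biinvariant by construction and the $K\overline{A^+}K$ decomposition of $\text{Sp}(2,\R)$ gives unique parameters $(\beta(g),\gamma(g))$, we may rewrite $\tilde{\varphi}(g)=\tilde{\varphi}(D(\beta(g),\gamma(g)))$. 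The same equality holds for $g'$.

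Now I would simply apply Lemma \ref{propC7} with $(\beta_1,\gamma_1)=(\beta(g),\gamma(g))$ and $(\beta_2,\gamma_2)=(\beta(g'),\gamma(g'))$, which is legitimate since the hypothesis $\beta(g)\leq\beta(g')$ matches the assumption $\beta_1\leq\beta_2$ of that lemma. This yields
\begin{equation*}
\left|\langle m_g-m_{g'},\varphi\rangle\right|=\left|\tilde{\varphi}(D(\beta(g),\gamma(g)))-\tilde{\varphi}(D(\beta(g'),\gamma(g')))\right|\leq C_7 e^{-\frac{\beta(g)}{32p}}\|\varphi\|_{M_{p\text{-}cb}(G)}.
\end{equation*}
Taking the supremum over $\varphi\in M_{p\text{-}cb}(G)$ with $\|\varphi\|_{M_{p\text{-}cb}(G)}\leq 1$ produces the stated bound with constant $C=C_7$.

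Since all the analytic work (the interpolation estimates on the operators $T_\theta$ and $S_\theta$, the choice of paths in the Weyl chamber through the systems of equations in Lemmas \ref{systeq1} and \ref{systeq2}, and the telescoping argument in Lemma \ref{propC6}) has already been carried out in the preceding lemmas, there is no genuine obstacle here; the corollary is a one-line consequence of Lemma \ref{propC7} once one unwinds the definition of $m_g$. The role of this corollary is purely to package the bound in a form directly usable for the construction of the Cauchy net of measures, mirroring the passage from Lemma \ref{mgcauchy} to the proof of Theorem \ref{SL3pAP} in the $\text{SL}(3,\R)$ case.
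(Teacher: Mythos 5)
Your proof is correct and coincides with the paper's: the corollary is obtained exactly by noting $\langle m_g,\varphi\rangle=\tilde{\varphi}(D(\beta(g),\gamma(g)))$ and applying Lemma \ref{propC7}, then taking the supremum over the unit ball of $M_{p\text{-}cb}(G)$. The paper itself treats this as an immediate restatement and gives no further argument.
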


\begin{proof}[Proof of Theorem \ref{Sp2pAP}]
	The proof is almost the same as that of Theorem \ref{SL3pAP}, by using Corollary \ref{mgcauchy2} instead of Lemma \ref{mgcauchy}. Just observe that in this case,  $\|g\|=e^{\beta(g)}$ and
	\begin{equation*}
	e^{-\beta(hg)}=\|hg\|^{-1}\leq\|h^{-1}\|\|g\|^{-1} \leq \|h\| e^{-\beta(g)},
	\end{equation*}
	for every $g,h\in G$. The last inequality holds because $h^{-1}=J^{-1}h^tJ$, where $J$ is the matrix defined in (\ref{defJ}).
\end{proof}

\section{Simple Lie groups with finite center}

Now we state the proof of Theorem \ref{LiepAP} using Theorems \ref{SL3pAP} and \ref{Sp2pAP}. For this purpose, we first need to discuss some stability properties of the $p\,$-AP.

\subsection{Stability properties}

Let $G$ be a second countable locally compact group and let $1<p<\infty$.
\begin{prop}\label{G/KpAP}
	If $K$ is a compact normal subgroup of $G$, then $G/K$ has the $p\,$-AP if and only if $G$ has the $p\,$-AP.
\end{prop}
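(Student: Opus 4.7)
The plan is to set up a duality between $M_{p\text{-}cb}(G)$ and $M_{p\text{-}cb}(G/K)$ via two canonical maps, and transport the weak-* convergence through their adjoints. Let $\pi:G\to G/K$ denote the quotient. The first map is the pullback $\iota(\psi)=\psi\circ\pi$, which is evidently a contraction $M_{p\text{-}cb}(G/K)\to M_{p\text{-}cb}(G)$ obtained by composing an $SQ_p$-representation of $\psi$ with $\pi$. The second is the two-sided average
\begin{equation*}
\rho(\varphi)(\dot s)=\int_K\int_K\varphi(k_1 s k_2)\,dk_1\,dk_2,
\end{equation*}
which is well-defined on $G/K$ precisely because $K$ is normal.

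To show that $\rho$ is a contraction $M_{p\text{-}cb}(G)\to M_{p\text{-}cb}(G/K)$, I would start from a representation $\varphi(st^{-1})=\langle\beta(s),\alpha(t)\rangle$ with $E\in SQ_p$ and form the biinvariant averages $\hat\alpha(t)=\int_K\int_K\alpha(k_1 t k_2)\,dk_1\,dk_2\in E$ and $\hat\beta(s)=\int_K\int_K\beta(k_1 s k_2)\,dk_1\,dk_2\in E^*$. A change-of-variables calculation, using the normality of $K$ to commute $t^{-1}$ past the compact factors (in the spirit of the proof of Lemma \ref{phiDkD}), yields $\rho(\varphi)(\dot s\dot t^{-1})=\langle\hat\beta(\dot s),\hat\alpha(\dot t)\rangle$. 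The $K$-biinvariance of $\hat\alpha,\hat\beta$ makes them well-defined on $G/K$, and they are continuous there by Lemma \ref{lemcontint}; the norm bounds follow from Jensen's inequality. Analogous averaging of a factorization $\varphi=g\ast\check f$, combined with the Weyl integration formula, shows that $\rho$ also restricts to a contraction $A_p(G)\to A_p(G/K)$; similarly $\iota$ restricts to a contraction $A_p(G/K)\to A_p(G)$ by lifting factorizations via $\pi$ (the lifts $\tilde g=g\circ\pi,\tilde f=f\circ\pi$ preserve $L_{p'}$- and $L_p$-norms when $K$ carries its normalized Haar measure).

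Once $\iota$ and $\rho$ are in place, the argument becomes formal. Their adjoints act on $L_1$ as $\iota^*(f)(\dot s)=\int_K f(sk)\,dk$ and $\rho^*(g)=g\circ\pi$, both of which preserve $L_1$-membership (isometrically for $\rho^*$, contractively for $\iota^*$) by Weyl integration, so they restrict to contractions between the corresponding $Q_{p\text{-}cb}$ spaces. If $(\psi_i)$ witnesses the $p\,$-AP of $G/K$, setting $\varphi_i=\iota(\psi_i)\in A_p(G)$ gives, for every $\mu\in Q_{p\text{-}cb}(G)$,
\begin{equation*}
\langle\varphi_i,\mu\rangle_G=\langle\psi_i,\iota^*\mu\rangle_{G/K}\to\langle 1,\iota^*\mu\rangle_{G/K}=\langle\iota(1),\mu\rangle_G=\langle 1,\mu\rangle_G,
\end{equation*}
establishing the $p\,$-AP for $G$. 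The converse direction is obtained symmetrically with $\psi_i=\rho(\varphi_i)\in A_p(G/K)$, using $\rho(1)=1$ to identify the limit.

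The main obstacle is the $M_{p\text{-}cb}$-norm bound for $\rho$: a priori $\hat\alpha(t)$ and $\hat\beta(s)$ depend on the representatives $t,s$ in $G$, and only the pairing $\langle\hat\beta(s),\hat\alpha(t)\rangle$ is manifestly a function of $(\dot s,\dot t)$. The symmetry of the two-sided average is exactly what forces $\hat\alpha,\hat\beta$ themselves to be $K$-biinvariant, so that they descend to $G/K$ and produce a bona fide $SQ_p$-representation of $\rho(\varphi)$ there; it is this step where the normality of $K$ is used in an essential way.
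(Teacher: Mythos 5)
Your proposal is correct and follows essentially the same route as the paper: a pullback $\iota$ in one direction, an averaging map in the other, and transport of the approximating net through their preadjoints on $L_1$ (the paper likewise identifies $\Psi=T^*$ and $\tilde T=\tilde\Psi^*$, and invokes Herz for the $A_p$-to-$A_p$ statements you sketch directly). The only real divergence is technical: by normality your two-sided average $\rho$ coincides with the paper's one-sided average $\tilde T(\varphi)(\dot s)=\int_K\varphi(sk)\,dk$, but symmetrizing lets you average \emph{both} $\alpha$ and $\beta$ into genuinely $K$-biinvariant maps, which neatly sidesteps the paper's more delicate step of defining $\tilde\beta(\dot s)$ as the restriction of $\beta(s)$ to the closed subspace spanned by the averaged $\alpha$'s and verifying its well-definedness via a separate computation with $\varphi$.
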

\begin{proof}
	Suppose first that $G/K$ has the $p\,$-AP. Define a linear map $\Psi:M_{p\text{-}cb}(G/K)\to M_{p\text{-}cb}(G)$ by $\Psi(\varphi)(s)=\varphi(\dot{s})$. It is well defined because for every $\varphi\in M_{p\text{-}cb}(G/K)$, we can write
	\begin{equation*}
	\Psi(\varphi)(st^{-1})=\langle\beta\circ\pi(s),\alpha\circ\pi(t)\rangle,
	\end{equation*}
	where $\alpha$ and $\beta$ are as in (\ref{phiab}) and $\pi:G\to G/K$ is the quotient map, which is continuous. Hence, taking the infimum over all $\alpha,\beta$, we see that $\|\Psi(\varphi)\|_{M_{p\text{-}cb}(G)}\leq \|\varphi\|_{M_{p\text{-}cb}(G/K)}$. Furthermore, we shall prove that $\Psi$ is $\sigma(M_{p\text{-}cb}(G/K),Q_{p\text{-}cb}(G/K))-\sigma(M_{p\text{-}cb}(G),Q_{p\text{-}cb}(G))$ continuous. Consider the linear map $T:C_c(G)\to C_c(G/K)$ given by
	\begin{equation*}
	T(f)(\dot{s})=\int_K f(sk)\,dk,\quad \forall f\in C_c(G),
	\end{equation*}
	where $\dot{s}=\pi(s)$. This map is well defined and does not depend on the choice of the representative of $\dot{s}$. As shown in \cite[\S 3.4]{Rei}, we may fix the Haar measure on $G/K$ so that $T$ extends to a contraction from $L_1(G)$ onto $L_1(G/K)$, and
	\begin{equation}\label{intG/K=intG}
	\int_{G/K}T(f)(\dot{s})\,d\dot{s} = \int_G f(s)\,ds,
	\end{equation}
	for every $f\in L_1(G)$. We wish now to extend $T$ to $Q_{p\text{-}cb}(G)$. Take $\varphi\in M_{p\text{-}cb}(G/K)$ and $f\in L_1(G)$. Observe that $\Psi(\varphi)(sk)=\Psi(\varphi)(s)$ for all $s\in G$, $k\in K$. Then, regarding $T(f)$ as an element of $M_{p\text{-}cb}(G/K)^*$ as in (\ref{alphacb}), we get
	\begin{align*}
	\langle T(f),\varphi\rangle &= \int_{G/K}T(f)(\dot{s})\varphi(\dot{s})\,d\dot{s}\\
	&= \int_{G/K}\left(\int_K f(sk)\,dk\right)\Psi(\varphi)(s)\,d\dot{s}\\
	&= \int_{G/K}\int_K f(sk)\Psi(\varphi)(sk)\,dk\,d\dot{s} \\
	&= \int_{G/K}T\left(f \Psi(\varphi)\right)(\dot{s})\,d\dot{s}\\
	&= \int_G f(s) \Psi(\varphi)(s)\,ds \\
	&= \langle f,\Psi(\varphi)\rangle.
	\end{align*}
	Here we have used the fact that $\Psi(\varphi)\in C_b(G)$, which implies that $f \Psi(\varphi)\in L_1(G)$. The previous computation shows two things. First, it implies that $T$ may be extended to a map from $Q_{p\text{-}cb}(G)$ onto $Q_{p\text{-}cb}(G/K)$, since
	\begin{equation*}
	\left|\langle T(f),\varphi\rangle\right| \leq \|f\|_{M_{p\text{-}cb}(G)^*}\|\Psi(\varphi)\|_{M_{p\text{-}cb}(G)}\leq \|f\|_{M_{p\text{-}cb}(G)^*}\|\varphi\|_{M_{p\text{-}cb}(G/K)}.
	\end{equation*}
	Second, it shows that $\Psi=T^*$. Hence, $\Psi$ is weak*-weak* continuous. Now take a net $(\varphi_i)$ in $A_p(G/K)$ such that $\varphi_i\to 1$ in $\sigma(M_{p\text{-}cb}(G/K),Q_{p\text{-}cb}(G/K))$. We have that $\Psi(\varphi_i)\to\Psi(1)$ in $\sigma(M_{p\text{-}cb}(G),Q_{p\text{-}cb}(G))$. Moreover, since $K$ is compact, by \cite[Proposition 6]{Her2}, $\Psi(\varphi_i)\in A_p(G)$ for all $i$. Since $\Psi(1)$ is the constant function $1$ on $G$, we conclude that $G$ has the $p\,$-AP.\\
	The proof of the other direction follows the same idea. We shall define operators $\tilde{T}$ and $\tilde{\Psi}$ by the same formulas as $T$ and $\Psi$, but in different spaces. Consider $\tilde{T}: M_{p\text{-}cb}(G)\to M_{p\text{-}cb}(G/K)$ given by 
	\begin{equation*}
	\tilde{T}(\varphi)(\dot{s})=\int_K \varphi(sk)\,dk,\quad \forall \varphi\in M_{p\text{-}cb}(G).
	\end{equation*}
	Let us prove that this map is well defined and continuous. Take $\varphi\in M_{p\text{-}cb}(G)$ and functions $\alpha:G\to E$, $\beta:G\to E^*$ as in (\ref{phiab}). Thus
	\begin{equation*}
	\tilde{T}(\varphi)(\dot{s}\dot{t}^{-1})=\int_K \langle\beta(s),\alpha(k^{-1}t)\rangle\,dk= \left\langle\beta(s),\int_K\alpha(k^{-1}t)\,dk\right\rangle,\quad\forall s,t\in G.
	\end{equation*}
	Define $\tilde{\alpha}(\dot{t})=\int_K\alpha(k^{-1}t)\,dk$. To see that it is well defined, take $t\in G$, $k'\in K$ and recall that $tk't^{-1}\in K$ because $K$ is normal. Then
	\begin{equation*}
	\int_K\alpha(k^{-1}tk')\,dk=\int_K\alpha(k^{-1}tk't^{-t}t)\,dk=\int_K\alpha(k^{-1}t)\,dk.
	\end{equation*}
	Now let $F$ be the closed subspace of $E$ generated by $\{\tilde{\alpha}(\dot{t})\ :\ t\in G\}$. Observe that $F\in SQ_p$ and that $\tilde{\alpha}:G/K\to F$ is a continuous function because $t\mapsto \int_K\alpha(k^{-1}t)\,dk$ is continuous thanks to Lemma \ref{lemcontint}. Now define a function $\tilde{\beta}:G/K\to F^*$ by
	\begin{equation*}
	\langle\beta(\dot{s}),x\rangle=\langle\beta(s),x\rangle,\quad \forall x\in F.
	\end{equation*}
	Again, it is well defined because
	\begin{align*}
	\left\langle\beta(sk'),\int_K\alpha(k^{-1}t)\,dk\right\rangle &= \int_K\varphi(sk't^{-1}k)\,dk \\
	&= \int_K\varphi(st^{-1}tk't^{-1}k)\,dk\\
	& = \int_K\varphi(st^{-1}k)\,dk \\
	&= \left\langle\beta(s),\int_K\alpha(k^{-1}t)\,dk\right\rangle,
	\end{align*}
	for all $s,t\in G$ and $k'\in K$. Moreover, it is continuous since, by definition,
	\begin{equation*}
	\|\tilde{\beta}(\dot{s}_1)-\tilde{\beta}(\dot{s}_2)\|_{F^*}\leq\|\beta(s_1)-\beta(s_2)\|_{E^*},\quad\forall s_1,s_2\in G.
	\end{equation*}
	Hence
	\begin{equation*}
	\tilde{T}(\varphi)(\dot{s}\dot{t}^{-1})=\langle\beta(\dot{s}),\tilde{\alpha}(\dot{t})\rangle,\quad\forall \dot{s},\dot{t}\in G/K,
	\end{equation*}
	and $\|\tilde{T}(\varphi)\|_{M_{p\text{-}cb}(G/K)}\leq \|\tilde{\alpha}\|_\infty\|\tilde{\beta}\|_\infty\leq \|\alpha\|_\infty\|\beta\|_\infty$. Thus $\|\tilde{T}(\varphi)\|_{M_{p\text{-}cb}(G/K)}\leq\|\varphi\|_{M_{p\text{-}cb}(G)}$. Now define $\tilde{\Psi}:C_c(G/K)\to C_c(G)$ by $\tilde{\Psi}(f)(s)=f(\dot{s})$. We wish to extend this map to $Q_{p\text{-}cb}(G/K)$ and prove that $\tilde{\Psi}^*=\tilde{T}$. Observe that by (\ref{intG/K=intG}),
	\begin{align*}
		\int_G|\tilde{\Psi}(f)(s)|\,ds &=\int_{G/K}T\left(|\tilde{\Psi}(f)|\right)(\dot{s})\,d\dot{s}\\
		&= \int_{G/K}\int_K\left|f(\dot{s})\right|\,dk\,d\dot{s}\\
		&=\int_{G/K}\left|f(\dot{s})\right|\,d\dot{s},
	\end{align*}
	for every $f\in C_c(G/K)$. Thus, $\tilde{\Psi}$ extends to an isometry from $L_1(G/K)$ into $L_1(G)$. Furthermore, for all $\varphi\in M_{p\text{-}cb}(G)$ and $f\in C_c(G/K)$,
	\begin{align*}
	\langle\tilde{T}(\varphi),f\rangle &= \int_{G/K}\tilde{T}(\varphi)(\dot{s})\tilde{\Psi}(f)(s)\,d\dot{s} \\
	&= \int_{G/K}\int_K\varphi(sk)\tilde{\Psi}(f)(sk)\,dk\,d\dot{s}\\
	&= \int_G\varphi(s)\tilde{\Psi}(f)(s)\,ds \\
	&= \langle\varphi,\tilde{\Psi}(f)\rangle.
	\end{align*}
	Hence,
	\begin{equation*}
	|\langle\varphi,\tilde{\Psi}(f)\rangle|\leq \|\tilde{T}(\varphi)\|_{M_{p\text{-}cb}(G/K)}\|f\|_{M_{p\text{-}cb}(G/K)^*}\leq \|\varphi\|_{M_{p\text{-}cb}(G)}\|f\|_{M_{p\text{-}cb}(G/K)^*}.
	\end{equation*}
	Thus, $\tilde{\Psi}$ extends to a contraction from $Q_{p\text{-}cb}(G/K)$ to $Q_{p\text{-}cb}(G)$, and $\tilde{\Psi}^*=\tilde{T}$. Finally, by \cite[Proposition 6]{Her2}, $\tilde{T}$ maps $A_p(G)$ to $A_p(G/K)$, so we conclude as before.
\end{proof}

\begin{rmk}
	As one of the referees pointed out, the proof of Proposition \ref{G/KpAP} can be made more conceptual. Namely, if $\nu_K$ stands for the normalized Haar measure on $K$, viewed as a measure on $G$, then it satifies
	$\nu_K=\nu_K\ast\nu_K=\check{\nu}_K=\delta_s\ast\nu_K\ast\delta_{s^{-1}},$
	and there is a standard identification $\nu_K\ast C_c(G)=C_c(G/K)$ which yields an identification $\nu_K\ast L_p(G)=L_p(G/K)$ and thus $\nu_K\ast A_p(G)=A_p(G/K)$. By similar computations as those of the previous proof, we also find $\nu_K\ast M_{p\text{-}cb}(G)=M_{p\text{-}cb}(G/K)$ and $\nu_K\ast Q_{p\text{-}cb}(G)=Q_{p\text{-}cb}(G/K)$. Therefore, the equivalence of the $p\,$-AP for $G$ and $G/K$ follows from all these identifications.
\end{rmk}

\begin{prop}\label{HpAP}
	If $G$ has the $p\,$-AP, then every closed subgroup of $G$ has the $p\,$-AP.
\end{prop}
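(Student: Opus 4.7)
The plan is to realise the restriction map $R\colon M_{p\text{-}cb}(G)\to M_{p\text{-}cb}(H)$, $R(\varphi)=\varphi|_H$, as a contractive homomorphism that is moreover weak*-weak* continuous for the topologies $\sigma(M_{p\text{-}cb}(\cdot),Q_{p\text{-}cb}(\cdot))$, and then apply $R$ to the approximating net $(\varphi_i)\subset A_p(G)$ witnessing the $p\,$-AP of $G$.

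Contractivity of $R$ is immediate from the definition of the $p\,$-completely bounded norm: if $\varphi(st^{-1})=\langle\beta(s),\alpha(t)\rangle$ with $\alpha\colon G\to E$, $\beta\colon G\to E^*$ bounded continuous and $E\in SQ_p$, then restricting $\alpha,\beta$ to $H$ yields a decomposition of $\varphi|_H$ over the same space $E$, hence $\|\varphi|_H\|_{M_{p\text{-}cb}(H)}\le\|\varphi\|_{M_{p\text{-}cb}(G)}$. By Herz's restriction theorem, $R(A_p(G))=A_p(H)$, so the restrictions $\varphi_i|_H$ automatically lie in $A_p(H)$.

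The main step, and the main technical obstacle, is the weak*-continuity of $R$, which amounts to showing that the adjoint $R^*\colon M_{p\text{-}cb}(H)^*\to M_{p\text{-}cb}(G)^*$ sends $Q_{p\text{-}cb}(H)$ into $Q_{p\text{-}cb}(G)$. By density it suffices to check this on $L_1(H)$: for each $f\in L_1(H)$ the functional $R^*(f)\colon\varphi\mapsto \int_H f(h)\varphi(h)\,dh$ should lie in the $M_{p\text{-}cb}(G)^*$-norm closure of $L_1(G)$. The natural candidate is the mollification $F_V(g)=\int_H f(h)\eta_V(h^{-1}g)\,dh\in L_1(G)$, where $(\eta_V)\subset C_c(G)^+$ is a normalised approximate identity concentrating at $1\in G$. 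The pointwise weak*-convergence $\langle F_V,\varphi\rangle\to\langle R^*(f),\varphi\rangle$ for each fixed $\varphi$ follows at once from continuity of $\varphi$ and dominated convergence; the delicate part is upgrading this to $M_{p\text{-}cb}(G)^*$-norm convergence uniformly over the unit ball of $M_{p\text{-}cb}(G)$, since the decomposition $(\alpha,\beta)$ depends on $\varphi$. A natural strategy is to invoke Remark \ref{SQpLp} to realise $\alpha,\beta$ in $L_p(\Omega),L_q(\Omega)$ and to combine it with a Bruhat function for the pair $(G,H)$ so as to produce the required uniform estimate, rather than relying on equicontinuity of the unit ball on compact sets, which need not hold.

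Once this weak*-continuity is in hand, the conclusion is immediate: if $\varphi_i\to 1$ in $\sigma(M_{p\text{-}cb}(G),Q_{p\text{-}cb}(G))$, then $\varphi_i|_H\in A_p(H)$ and $\varphi_i|_H\to 1$ in $\sigma(M_{p\text{-}cb}(H),Q_{p\text{-}cb}(H))$, so $H$ has the $p\,$-AP.
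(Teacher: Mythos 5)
Your overall scheme (transport the approximating net from $G$ to $H$ via a contractive, weak*-weak*-continuous map sending $1$ to $1$ and $A_p(G)$ into $A_p(H)$) is the right one, and the contractivity of restriction and the fact that $\varphi_i|_H\in A_p(H)$ are fine. But the proof has a genuine gap at exactly the point you flag: you never actually establish that $R^*$ maps $Q_{p\text{-}cb}(H)$ into $Q_{p\text{-}cb}(G)$. The functional $\varphi\mapsto\int_H f(h)\varphi(h)\,dh$ is integration against a measure on $G$ that is singular with respect to Haar measure of $G$ whenever $H$ has measure zero, and to place it in $Q_{p\text{-}cb}(G)$ you must approximate it \emph{in the $M_{p\text{-}cb}(G)^*$-norm} by elements of $L_1(G)$. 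Pointwise (weak*) convergence of your mollifications $F_V$ is easy but useless here, since $Q_{p\text{-}cb}(G)$ is a norm closure; the required uniformity over the unit ball of $M_{p\text{-}cb}(G)$ is the whole difficulty, and "invoke Remark \ref{SQpLp} and a Bruhat function" is a gesture, not an argument. Indeed, the paper explicitly remarks that the $p=2$ proof of this proposition (Haagerup--Kraus) does not adapt because the needed structural input has no known analogue for $p\neq2$; your route runs straight into that obstruction.

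The paper's proof sidesteps the issue by not using the restriction map at all. Fix $\psi\in C_c(G)$, $\psi\ge0$, $\int_G\psi=1$, and define $\Phi:L_1(H)\to L_1(G)$ by $\Phi(f)=\mu_f\ast\psi$, i.e.\ $\Phi(f)(s)=\int_H f(t)\psi(t^{-1}s)\,dt$. This is a contraction whose image genuinely lies in $L_1(G)$, so it extends to a map $Q_{p\text{-}cb}(H)\to Q_{p\text{-}cb}(G)$ with no limiting argument, and its adjoint is $\Phi^*(\varphi)=(\varphi\ast\check\psi)|_H$, which is automatically weak*-weak*-continuous. One then checks $\Phi^*(1)=1$, that $\Phi^*(\varphi)\in M_{p\text{-}cb}(H)$ with $\|\Phi^*(\varphi)\|\le\|\varphi\|_{M_{p\text{-}cb}(G)}$ (by convolving the vector-valued function $\alpha$ against $\psi$), and that $\Phi^*$ maps $A_p(G)$ into $A_p(H)$ via $(g\ast\check f)\ast\check\psi=g\ast(\psi\ast f)^{\vee}$ followed by Herz's restriction theorem. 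The point is that weak* convergence of $\Phi^*(\varphi_i)$ to $\Phi^*(1)=1$ is all the $p\,$-AP requires, so one is free to replace restriction by this smoothed restriction. To repair your proof, you should adopt this modification rather than try to prove weak*-continuity of the unsmoothed restriction map.
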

\begin{proof}
	Let $H$ be a closed subgroup of $G$ endowed with a left Haar measure. As in the proof of Proposition \ref{G/KpAP}, we shall define a map from $Q_{p\text{-}cb}(H)$ to $Q_{p\text{-}cb}(G)$ and then consider its adjoint. Take $\psi\in C_c(G)$ such that $\psi\geq 0$ and $\int_G\psi=1$, and consider the convolution map $\Phi:L_1(H)\to L_1(G)$ given by
	\begin{equation*}
	\Phi(f)(s)=\int_H f(t)\psi(t^{-1}s)\,dt.
	\end{equation*}
	This map is well defined since $\Phi(f)=\mu_f\ast\psi$, where $\mu_f$ is the measure on $G$ given by
	\begin{equation*}
	\int_G g(s)\,d\mu_f(s) = \int_H g(t)f(t) dt, \quad \forall g\in C_c(G).
	\end{equation*}
	Moreover, it is a contraction. Indeed, for every $f\in L_1(H)$,
	\begin{equation*}
	\left\|\Phi(f)\right\|_{L_1(G)}\leq \int_H |f(t)| \int_G |\psi(t^{-1}s)|\,ds\,dt\leq \|\psi\|_{L_1(G)}\|f\|_{L_1(H)}= \|f\|_{L_1(H)}.
	\end{equation*}
	Take now $\varphi\in M_{p\text{-}cb}(G)$.
	\begin{equation*}
	\langle\Phi(f),\varphi\rangle =\int_G\left(\int_H f(t)\psi(t^{-1}s)\,dt\right)\varphi(s)\,ds.
	\end{equation*}
	Observe that
	\begin{equation*}
	\int_G\int_H |f(t)| |\psi(t^{-1}s)| |\varphi(s)|\,dt\,ds \leq \|f\|_{L_1(H)}\|\varphi\|_\infty.
	\end{equation*}
	So by Fubini's theorem,
	\begin{equation}\label{Phi(f),phi}
	\langle\Phi(f),\varphi\rangle =\int_H f(t) \int_G\psi(t^{-1}s)\varphi(s)\,ds\,dt=\int_H f(t) \ \varphi\ast\check{\psi}(t)\,dt.
	\end{equation}
	Let us prove now that $\varphi\ast\check{\psi}$ defines an element of $M_{p\text{-}cb}(H)$. Take $\alpha$ and $\beta$ like in (\ref{phiab}), so 
	\begin{equation*}
	\varphi(st^{-1})=\langle\beta(s),\alpha(t)\rangle,\quad \forall s,t\in G.
	\end{equation*}
	Then
	\begin{align}
	\varphi\ast\check{\psi}(st^{-1}) &= \int_G\psi(ts^{-1}r)\varphi(r)\,dr \nonumber\\
	&= \int_G\psi(r)\varphi(st^{-1}r)\,dr\nonumber\\
	&= \int_G\psi(r)\langle\beta(s),\alpha(r^{-1}t)\rangle\,dr \nonumber\\
	&= \langle\beta(s),\tilde{\alpha}(t)\rangle,\label{alphatild}
	\end{align}
	where
	\begin{equation*}
	\tilde{\alpha}(t)=\int_G \psi(r)\alpha(r^{-1}t)\,dr.
	\end{equation*}
	Recall that the function $\psi$ has compact support, so $\tilde{\alpha}$ is well defined and continuous, and the equality (\ref{alphatild}) is justified. Moreover,
	\begin{equation*}
	\|\tilde{\alpha}(t)\|\leq\int_G |\psi(r)|\|\alpha(r^{-1}t)\|\,dr \leq \sup_{t\in G}\|\alpha(t)\|,\quad\forall t\in G.
	\end{equation*}
	So, in particular,
	\begin{equation*}
	\left(\sup_{s\in H}\|\beta(t)\|\right)\left(\sup_{t\in H}\|\tilde{\alpha}(t)\|\right) \leq \left(\sup_{s\in G}\|\beta(t)\|\right)\left(\sup_{t\in G}\|\alpha(t)\|\right).
	\end{equation*}
	Thus $\left(\varphi\ast\check{\psi}\right)\!|_H\in M_{p\text{-}cb}(H)$ and $\left\|\left(\varphi\ast\check{\psi}\right)\!|_H\right\|_{M_{p\text{-}cb}(H)}\leq\|\varphi\|_{M_{p\text{-}cb}(G)}$. Using (\ref{Phi(f),phi}) we get
	\begin{equation*}
	\left|\langle\Phi(f),\varphi\rangle\right|\leq \|f\|_{M_{p\text{-}cb}(H)^*}\left\|\left(\varphi\ast\check{\psi}\right)\!|_H\right\|_{M_{p\text{-}cb}(H)}\leq \|f\|_{M_{p\text{-}cb}(H)^*}\|\varphi\|_{M_{p\text{-}cb}(G)}.
	\end{equation*}
	Hence, $\Phi$ extends to a contraction from $Q_{p\text{-}cb}(H)$ to $Q_{p\text{-}cb}(G)$, and its adjoint $\Phi^*:M_{p\text{-}cb}(G)\to M_{p\text{-}cb}(H)$ is given by $\Phi^*(\varphi)=\left(\varphi\ast\check{\psi}\right)\!|_H$. Observe that $\Phi^*(1)=1$, so if we prove that $\Phi^*$ maps $A_p(G)$ to $A_p(H)$, we can conclude that $H$ has the $p\,$-AP as in Proposition \ref{G/KpAP}. Take $f\in L_p(G)$ and $g\in L_{p'}(G)$, where $p'$ is the H\"older conjugate of $p$, and observe that
	\begin{equation*}
	\left(g\ast\check{f}\right)\ast\check{\psi}=g\ast\left(\psi\ast f\right)^{\vee}.
	\end{equation*}
	Since $\|\psi\ast f\|_p\leq\|f\|_p$, we get
	\begin{equation*}
	\left\|\left(\sum_{n=1}^Ng_n\ast\check{f}_n\right)\ast\check{\psi}\right\|_{A_p(G)}\leq \sum_{n=1}^N\|g_n\|_{p'}\|\psi\ast f_n\|_p
	\leq \sum_{n=1}^N\|g_n\|_{p'}\|f_n\|_p,
	\end{equation*}
	for all finite families $f_1,...,f_N\in L_p(G)$, $g_1,...,g_N\in L_{p'}(G)$. By density, this implies that $\|a\ast\check{\psi}\|_{A_p(G)}\leq \|a\|_{A_p(G)}$ for all $a\in A_p(G)$. Since $H$ is a closed subgroup of $G$, then the restriction $\left(a\ast\check{\psi}\right)\!|_H$ is an element of $A_p(H)$ and $\left\|\left(a\ast\check{\psi}\right)\!|_H\right\|_{A_p(H)}\leq\|a\ast\check{\psi}\|_{A_p(G)}$ (see e.g. \cite[Theorem 7.8.2]{Der}). Therefore, $\Phi^*$ maps $A_p(G)$ to $A_p(H)$. Thus, if $(\varphi_i)$ is a net in $A_p(G)$ such that $\varphi_i\to 1$ in $\sigma(M_{p\text{-}cb}(G),Q_{p\text{-}cb}(G))$, then $(\Phi^*(\varphi_i))$ is a net in $A_p(H)$ such that $\Phi^*(\varphi_i)\to 1$ in $\sigma(M_{p\text{-}cb}(H),Q_{p\text{-}cb}(H))$.
\end{proof}

\begin{rmk}
	For $p=2$, Proposition \ref{HpAP} corresponds to \cite[Proposition 1.14]{HaaKra}, whose proof is very different. We are not able to adapt that proof since there is no known analogue of \cite[Theorem 1.11]{HaaKra} for $p\neq 2$. This is related to the fact that, as observed by Daws, we do not have a simple description of what $PM_p(\text{SU}(2))$ is, unless $p=2$ (see the remark right after \cite[Proposition 8.7]{Daw}).
\end{rmk}

Now let $\Gamma$ be lattice in $G$. This means that $\Gamma$ is a discrete subgroup of $G$ such that $G/\Gamma$ has a finite $G$-invariant measure. This implies the existence of a Borel subset $\Omega\subset G$ such that the restriction of the quotient map $G\to G/\Gamma$ to $\Omega$ is bijective. Observe that this allows to define maps $\omega:G\to\Omega$, $\gamma:G\to\Gamma$ such that
\begin{equation}\label{s=omga}
s=\omega(s)\gamma(s),\quad\forall s\in G,
\end{equation}
and this decomposition is unique. Let $\mu_G$ be the Haar measure on $G$. Since $\Gamma$ is a lattice, $\mu_G(\Omega)$ is finite and strictly positive, so we may normalize $\mu_G$ in such a way that $\mu_G(\Omega)=1$. The following proposition corresponds to \cite[Theorem 2.4]{HaaKra} for $p=2$ and the proof uses the same ideas.

\begin{prop}\label{pAPGamma}
	Let $1<p<\infty$. If $\Gamma$ has the $p\,$-AP, then so does $G$.
\end{prop}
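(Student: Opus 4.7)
The approach mirrors Proposition~\ref{HpAP}, but in the opposite direction: rather than restrict multipliers from $G$ to a closed subgroup, we induce them from the lattice $\Gamma$ up to $G$. The plan is to construct a contractive linear map $\Phi: M_{p\text{-}cb}(\Gamma) \to M_{p\text{-}cb}(G)$ with $\Phi(1)=1$, sending $A_p(\Gamma)$ into $A_p(G)$, that is weak-$*$-to-weak-$*$ continuous with respect to the preduals $Q_{p\text{-}cb}(\Gamma)$ and $Q_{p\text{-}cb}(G)$. Given such $\Phi$, a net $(\psi_i)\subset A_p(\Gamma)$ approximating $1$ in the weak-$*$ topology (provided by the $p$-AP of $\Gamma$) yields the net $(\Phi(\psi_i))\subset A_p(G)$ converging to $\Phi(1)=1$, establishing the $p$-AP for $G$.

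The explicit formula I would use is
\[
\Phi(\psi)(g) \;=\; \int_\Omega \psi\!\bigl(\gamma(g^{-1}x)^{-1}\bigr)\, dx.
\]
Introducing the cocycle $c:G\times\Omega\to\Gamma$ given by $c(g,x)=\gamma(gx)$, which satisfies $c(g_1g_2,x)=c(g_1,g_2\cdot x)\,c(g_2,x)$ with $g\cdot x:=\omega(gx)$ denoting the measure-preserving $G$-action on $\Omega\cong G/\Gamma$, and starting from a decomposition $\psi(\gamma_1\gamma_2^{-1})=\langle b(\gamma_1),a(\gamma_2)\rangle$ with $E\in SQ_p$, the change of variable $z=s_1^{-1}\cdot x$ together with the cocycle identity is expected to give
\[
\Phi(\psi)(s_1s_2^{-1}) \;=\; \int_\Omega \langle b(c(s_1,z)),\,a(c(s_2,z))\rangle\, dz \;=\; \langle \tilde b(s_1),\tilde a(s_2)\rangle,
\]
where $\tilde a(t)(z):=a(c(t,z))\in L_p(\Omega;E)$ and $\tilde b(s)(z):=b(c(s,z))\in L_{p'}(\Omega;E^*)$. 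Since $L_p(\Omega;E)\in SQ_p$, this exhibits $\Phi(\psi)\in M_{p\text{-}cb}(G)$ with $\|\Phi(\psi)\|_{M_{p\text{-}cb}(G)}\le\|\psi\|_{M_{p\text{-}cb}(\Gamma)}$. Continuity of $\tilde a,\tilde b$ in the $L_p$-norm will follow from dominated convergence, as $c(\cdot,z)$ is locally constant in $t$ off a $z$-null set (boundaries of translates of fundamental cells being null in $\Omega$); and $\Phi(1)=1$ is immediate.

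To see that $\Phi$ maps $A_p(\Gamma)$ into $A_p(G)$: for $\psi=\sum_n g_n\ast\check{f}_n$ with $g_n\in\ell_{p'}(\Gamma)$, $f_n\in\ell_p(\Gamma)$, extend to $G_n\in L_{p'}(G)$ and $F_n\in L_p(G)$ via $G_n(x\gamma):=g_n(\gamma)\mathbf{1}_\Omega(x)$ and $F_n(x\gamma):=f_n(\gamma)\mathbf{1}_\Omega(x)$; the norms are preserved because $\mu_G(\Omega)=1$. A direct computation using $\gamma(g^{-1}x\gamma_0)=c(g^{-1},x)\gamma_0$ for $x\in\Omega,\gamma_0\in\Gamma$ then gives $\Phi(\psi)=\sum_n G_n\ast\check{F}_n$, whence $\|\Phi(\psi)\|_{A_p(G)}\le\|\psi\|_{A_p(\Gamma)}$. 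For the weak-$*$ continuity, the pre-adjoint on $L_1(G)$ should compute to
\[
\Phi_*(f)(\gamma') \;=\; \int_\Omega\!\int_\Omega f(x\gamma' y^{-1})\, dx\, dy,
\]
which is a contraction $L_1(G)\to\ell_1(\Gamma)$ because $G=\Omega\Gamma$ essentially uniquely and $G$ is unimodular (having a lattice); contractivity of $\Phi$ then extends $\Phi_*$ by duality to a contraction $Q_{p\text{-}cb}(G)\to Q_{p\text{-}cb}(\Gamma)$ whose adjoint is $\Phi$. The main technical obstacle is the justification of the cocycle computation and the change of variable under the $G$-action on $\Omega$, together with the $L_p$-continuity of $\tilde a,\tilde b$; the remaining steps are routine adaptations of the closed-subgroup argument of Proposition~\ref{HpAP}.
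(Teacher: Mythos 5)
Your construction is, up to a change of variables, exactly the one in the paper: your $\Phi(\psi)(g)=\int_\Omega\psi(\gamma(g^{-1}x)^{-1})\,dx$ coincides with the paper's $\int_\Omega\psi(\gamma(gu))\,du$, the cocycle identity $\gamma(st^{-1}\omega(tu))=\gamma(su)\gamma(tu)^{-1}$ and the measure preservation of $u\mapsto\omega(tu)$ are the same ingredients, the representation via $\tilde\alpha(t)=\alpha(\gamma(t\,\cdot))\in L_p(\Omega;E)$ is identical, the pre-adjoint $f\mapsto\bigl(r\mapsto\int_\Omega\int_\Omega f(xru^{-1})\,dx\,du\bigr)$ is the paper's $\Psi$, and the $A_p(\Gamma)\to A_p(G)$ compatibility via the lifts $f\circ\gamma$, $g\circ\gamma$ is verified by the same computation. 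So in outline this is the paper's proof.

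The one step that does not go through as you state it is the continuity of $\tilde\alpha$ and $\tilde\beta$. The paper's working definition of $M_{p\text{-}cb}(G)$ (equation (\ref{phiab})) requires \emph{continuous} bounded maps $\alpha,\beta$, and your proposed justification --- dominated convergence because ``$c(\cdot,z)$ is locally constant in $t$ off a $z$-null set, boundaries of translates of fundamental cells being null'' --- presupposes a fundamental domain with negligible boundary. A Borel fundamental domain $\Omega$ for a lattice carries no such regularity in general (it may have empty interior, and $t\mapsto\gamma(tz)$ need then not stabilize near a given $t$ for a.e.\ $z$); producing a regular $\Omega$ is an additional, nontrivial task, especially for non-uniform lattices, and the proposition is stated for an arbitrary second countable $G$. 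The paper sidesteps this entirely: it observes that $\Phi(\varphi)=\tilde\Phi(\varphi)\ast\check{\mathds{1}}_\Omega$ is automatically continuous, being the convolution of an $L_\infty$ function with an $L_1$ function, and then invokes the measurable-representation criterion \cite[Theorem 8.6]{Daw} together with Remark \ref{SQpLp}, which only requires the bounded maps $\tilde\alpha,\tilde\beta$ into an $L_p$-space to be measurable, provided the resulting kernel is (separately) continuous. You should either import that criterion or prove the existence of a suitably regular fundamental domain; as written, this step is a genuine gap. The remaining points you flag (the cocycle computation, the change of variable under the $G$-action) are correctly handled and match the paper.
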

\begin{proof}
	Again the strategy is to define a suitable map $\Phi: M_{p\text{-}cb}(\Gamma)\to M_{p\text{-}cb}(G)$ and consider the image of an approximating net in $A_p(\Gamma)$. For $\varphi\in M_{p\text{-}cb}(\Gamma)$, define $\tilde{\Phi}(\varphi)\in L_\infty(G)$ by
	\begin{align*}
	\tilde{\Phi}(\varphi) (s) = \varphi(\gamma(s)),\qquad\forall\, s\in G,
	\end{align*}
	and put $\Phi(\varphi)=\tilde{\Phi}(\varphi)\ast\check{\mathds{1}}_\Omega$, where $\check{\mathds{1}}_\Omega$ stands for the indicator function of the set $\Omega^{-1}$.
	Thus,
	\begin{equation*}
	\Phi(\varphi)(s)=\int_{\Omega}\varphi(\gamma(su))\,du,\quad\forall s\in G.
	\end{equation*}
	This map satisfies $\Phi(1)=1$. Let us show that $\Phi(\varphi)\in M_{p\text{-}cb}(G)$. Take $E$, $\alpha$ and $\beta$ as in (\ref{phiab}). We have
	\begin{equation*}
	\Phi(\varphi)(st^{-1})=\int_{\Omega}\varphi(\gamma(st^{-1}u))\,du,\quad\forall s,t\in G.
	\end{equation*}
	As was shown in the proof of \cite[Lemma 2.1]{Haa}, the map $u\in\Omega\mapsto\omega(tu)\in\Omega$ preserves $\mu_G$ for all $t\in G$. Hence we get
	\begin{equation*}
	\Phi(\varphi)(st^{-1})=\int_{\Omega}\varphi(\gamma(st^{-1}\omega(tu)))\,du,\quad\forall s,t\in G.
	\end{equation*}
	Now observe that
	\begin{equation*}
	st^{-1}\omega(tu)=su(tu)^{-1}\omega(tu)=\omega(su)\gamma(su)\gamma(tu)^{-1}.
	\end{equation*}
	Since the decomposition (\ref{s=omga}) is unique, we get $\gamma(st^{-1}\omega(tu))=\gamma(su)\gamma(tu)^{-1}$. Thus
	\begin{equation*}
	\Phi(\varphi)(st^{-1})=\int_{\Omega}\varphi(\gamma(su)\gamma(tu)^{-1})\,du=\int_{\Omega}\langle\beta(\gamma(su)),\alpha(\gamma(tu))\rangle\,du,\quad\forall s,t\in G.
	\end{equation*}
	Therefore, if we define $\tilde{\alpha}:G\to L_p(\Omega;E)$ by $\tilde{\alpha}(t)=\alpha(\gamma(t\ \cdot))$, and $\tilde{\beta}:G\to L_p(\Omega;E)^*$ by
	\begin{equation*}
	\langle\tilde{\beta}(s),f\rangle = \int_\Omega \langle\beta(\gamma(su)),f(u)\rangle\,du,\quad \forall s\in G,\ \forall f\in L_p(\Omega;E),
	\end{equation*}
	we get $\Phi(\varphi)(st^{-1})=\langle\tilde{\beta}(s),\tilde{\alpha}(t)\rangle$ for all $s,t\in G$. Moreover,
	\begin{equation}\label{normalphtil}
	\|\tilde{\alpha}(t)\|_{L_p(\Omega;E)}=\left( \int_\Omega \|\alpha(tu)\|^p\, du\right)^{\frac{1}{p}}\leq\|\alpha\|_{\infty},\quad \forall t\in G.
	\end{equation}
	And putting $p'=\frac{p}{p-1}$, for all $s\in G$ and $f\in L_p(\Omega;E)$,
	\begin{align}
	|\langle\tilde{\beta}(s),f\rangle | & \leq \int_\Omega \|\beta(\gamma(su))\| \|f(u)\|\,du\nonumber \\
	& \leq \left(\int_\Omega \|\beta(\gamma(su))\|^{p'}\,du\right)^{\frac{1}{p'}} \left(\int_\Omega  \|f(u)\|^p\,du\right)^{\frac{1}{p}}\nonumber\\
	& \leq \|\beta\|_{\infty}  \|f\|_{L_p(\Omega;E)}.\label{normbettil}
	\end{align}
	Hence, $\|\tilde{\alpha}\|_{\infty}\leq\|\alpha\|_{\infty}$ and $\|\tilde{\beta}\|_{\infty}\leq\|\beta\|_{\infty}$. Observe that $\Phi(\varphi)$ is continuous because it is the convolution of $\tilde{\Phi}(\varphi)\in L_\infty(G)$ and $\check{\mathds{1}}_\Omega\in L_1(G)$ (see e.g. \cite[Proposition 2.39]{Foll}). In terms of \cite[Theorem 8.6]{Daw}, this, together with Remark \ref{SQpLp}, is enough to conclude that $\Phi(\varphi)\in M_{p\text{-}cb}(G)$ and $\|\Phi(\varphi)\|_{M_{p\text{-}cb}(G)}\leq\|\alpha\|_{\infty}\|\beta\|_{\infty}$. Taking the infimum over all the decompositions, we obtain
	\begin{equation*}
	\|\Phi(\varphi)\|_{M_{p\text{-}cb}(G)}\leq\|\varphi\|_{M_{p\text{-}cb}(\Gamma)}.
	\end{equation*}
	Let us show now that $\Phi$ is an adjoint operator. Let $\varphi\in M_{p\text{-}cb}(\Gamma)$ and $f\in L_1(G)$. We have
	\begin{align*}
	\langle f,\Phi(\varphi)\rangle &= \int_G f(s)\int_\Omega\varphi(\gamma(su))\, du\, ds \\
	&= \int_\Omega\int_G f(s)\varphi(\gamma(su))\, ds\, du\\
	&= \int_\Omega\int_G f(su^{-1})\varphi(\gamma(s))\, ds\, du \\
	&= \int_\Omega \left(\sum_{r\in\Gamma}\int_{\Omega r} f(su^{-1})\varphi(\gamma(s))\, ds\right)\, du\\
	&= \sum_{r\in\Gamma}\varphi(r)\int_\Omega \int_{\Omega r} f(su^{-1})\, ds\, du.
	\end{align*}
	In the first line we have used Fubini's theorem, which holds because
	\begin{equation*}
	\int_\Omega\int_G |f(s)| |\varphi(\gamma(su))|\, ds\, du \leq \|f\|_1 \|\varphi\|_\infty.
	\end{equation*}
	Thus, defining
	\begin{equation*}
	\Psi(f)(r)=\int_\Omega \int_{\Omega r} f(su^{-1})\, ds\, du,\quad\forall f\in L_1(G),\ \forall r\in\Gamma,
	\end{equation*}
	the previous computations show that $\Psi(f)\in\ell_1(\Gamma)$ and that $\Phi=\Psi^*$, as in the proof of Proposition \ref{G/KpAP}. We conclude that $\Phi:M_{p\text{-}cb}(\Gamma)\to M_{p\text{-}cb}(G)$ is weak*-weak* continuous. Finally, we need to check that $\Phi$ maps $A_p(\Gamma)$ to $A_p(G)$. Take $f\in \ell_p(\Gamma)$ and $g\in\ell_{p'}(\Gamma)$, and define
	\begin{equation*}
	\tilde{f}(s)=f(\gamma(s)),\quad \tilde{g}(s)=g(\gamma(s)),\quad\forall s\in G.
	\end{equation*}
	Observe that
	\begin{equation*}
	\int_G|\tilde{f}(s)|^p\,ds=\sum_{r\in\Gamma}\int_{\Omega r}|f(r)|^p\,ds=\|f\|_p^p.
	\end{equation*}
	This shows that $\tilde{f}\in L_p(G)$ and $\|\tilde{f}\|_p\leq\|f\|_p$. Analogously $\|\tilde{g}\|_{p'}\leq\|g\|_{p'}$. Moreover, for every $s\in G$,
	\begin{align*}
	\tilde{g}\ast\check{\tilde{f}}(s) &= \tilde{f}\ast\check{\tilde{g}}(s^{-1}) \\
	&=\int_G \tilde{f}(t)\tilde{g}(st)\,dt \\
	&= \int_G f(\gamma(t))g(\gamma(st))\,dt\\
	&=\sum_{r\in\Gamma}f(r)\int_{\Omega r}g(\gamma(st))\,dt \\
	&=\sum_{r\in\Gamma}f(r)\int_{\Omega}g(\gamma(str))\,dt\\
	&=\int_{\Omega}\sum_{r\in\Gamma}f(r)g(\gamma(st)r)\,dt \\
	&=\int_{\Omega}f\ast\check{g}(\gamma(st)^{-1})\,dt\\
	&=\int_{\Omega}g\ast\check{f}(\gamma(st))\,dt \\
	&= \Phi(g\ast\check{f})(s).
	\end{align*}
	Hence $\Phi(g\ast\check{f})\in A_p(G)$ and $\|\Phi(g\ast\check{f})\|_{A_p(G)}\leq\|g\|_{p'}\|f\|_p$. By linearity we obtain
	\begin{equation*}
	\left\|\Phi\left(\sum g_n\ast\check{f}_n\right)\right\|_{A_p(G)}\leq\sum\|g_n\|_{p'}\|f_n\|_p,
	\end{equation*}
	for all finite families $(f_n)$ in $\ell_p(\Gamma)$ and $(g_n)$ in $\ell_{p'}(\Gamma)$. Taking the infimum over all the decompositions we get $\|\Phi(a)\|_{A_p(G)}\leq\|a\|_{A_p(\Gamma)}$ for all $a\in \Lambda_p(\ell_{p'}(\Gamma)\otimes \ell_p(\Gamma))\subset A_p(\Gamma)$, where $\Lambda_p$ is the map defined in (\ref{Lamda_p}). By density the same holds for all $a\in A_p(\Gamma)$. Thus, if $(\varphi_i)$ is a net in $A_p(\Gamma)$ such that $\varphi_i\to 1$ in $\sigma(M_{p\text{-}cb}(\Gamma),Q_{p\text{-}cb}(\Gamma))$, then $(\Phi(\varphi_i))$ is a net in $A_p(G)$ such that $\Phi(\varphi_i)\to 1$ in $\sigma(M_{p\text{-}cb}(G),Q_{p\text{-}cb}(G))$.
\end{proof}

\subsection{Proof of the theorem}

Now we are ready to give the proof of Theorem \ref{LiepAP}, which relies on the following lemma.

\begin{lem}\label{G1G2}
	Let $1<p<\infty$ and $G_1, G_2$ be two locally isomorphic connected simple Lie groups with finite center. Then $G_1$ has the $p\,$-AP if and only if $G_2$ has the $p\,$-AP.
\end{lem}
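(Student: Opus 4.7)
The plan is to reduce both groups to a common quotient, namely the adjoint group associated with their shared Lie algebra, and then apply the stability of the $p\,$-AP under quotients by compact normal subgroups (Proposition \ref{G/KpAP}).

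First I would recall that ``locally isomorphic'' means $G_1$ and $G_2$ have isomorphic Lie algebras; fix an isomorphism and call the common Lie algebra $\mathfrak{g}$. Since $G_1$ and $G_2$ are simple, $\mathfrak{g}$ is a simple real Lie algebra, and the associated adjoint group $\operatorname{Ad}(\mathfrak{g})$ (the connected subgroup of $\operatorname{Aut}(\mathfrak{g})$ whose Lie algebra is $\operatorname{ad}(\mathfrak{g})$) is a connected Lie group depending only on $\mathfrak{g}$. For each $i=1,2$, the adjoint representation gives a surjective homomorphism $\operatorname{Ad}_i: G_i \to \operatorname{Ad}(\mathfrak{g})$ whose kernel is precisely the center $Z(G_i)$. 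Hence we obtain canonical isomorphisms
\begin{equation*}
G_1/Z(G_1) \;\cong\; \operatorname{Ad}(\mathfrak{g}) \;\cong\; G_2/Z(G_2).
\end{equation*}
Denote this common quotient by $G$.

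Next, since $G_1$ and $G_2$ have finite center by hypothesis, the normal subgroups $Z(G_1)\subset G_1$ and $Z(G_2)\subset G_2$ are finite, hence compact. Therefore Proposition \ref{G/KpAP} applies to each of them, yielding
\begin{equation*}
G_1 \text{ has the } p\text{-AP} \iff G_1/Z(G_1) \text{ has the } p\text{-AP} \iff G \text{ has the } p\text{-AP},
\end{equation*}
and similarly for $G_2$. Chaining these equivalences gives the desired conclusion.

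The main (and really only) nontrivial ingredient is Proposition \ref{G/KpAP}, which has already been established; the remaining step is the purely Lie-theoretic fact that the adjoint quotient is an invariant of the Lie algebra, so I do not expect any real obstacle here.
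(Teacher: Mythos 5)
Your proposal is correct and follows essentially the same route as the paper: both pass to the adjoint group via $G_i/Z(G_i)\cong\operatorname{Ad}(\mathfrak{g})$ (the paper cites \cite[Corollary II.5.2]{Hel}) and then apply Proposition \ref{G/KpAP} to the finite, hence compact, centers. No gaps.
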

\begin{proof}
	Since the groups are locally isomorphic, their Lie algebras are isomorphic, which in turn implies that their adjoint groups are isomorphic (see \cite[\S II.5]{Hel} for more details). If we denote the center of $G_i$ by $Z(G_i)$, then by \cite[Corollary II.5.2]{Hel}, $G_1/Z(G_1)$ and $G_2/Z(G_2)$ are isomorphic. Then, using Proposition \ref{G/KpAP}, we see that $G_i$ has the $p\,$-AP if and only if $G_i/Z(G_i)$ does. Since the quotients are isomorphic, the result follows.
\end{proof}

\begin{proof}[Proof of Theorem \ref{LiepAP}]
	First let us prove part (\textit{a}). If $G$ has real rank greater than 1, then by \cite[Proposition I.1.6.2]{Mar}, it has a subgroup $H$ which is locally isomorphic to either $\text{SL}(3,\R)$ or $\text{Sp}(2,\R)$. Moreover, as was shown in \cite[\S 4]{Dor}, $H$ is a closed subgroup of $G$. By Lemma \ref{G1G2}, $H$ cannot have the $p\,$-AP for any $1<p<\infty$, or else it would contradict Theorem \ref{SL3pAP} or Theorem \ref{Sp2pAP}. Furthermore, by Proposition \ref{HpAP}, the same holds for $G$. On the other hand, it was shown in \cite{CowHaa} that if $G$ has real rank 1, it is weakly amenable. And it follows from the $KAK$ decomposition that if $G$ has real rank 0, it is compact, which in turns implies that it is amenable. By \cite[Theorem 1.12]{HaaKra}, these properties are stronger than the AP, so we conclude using Proposition \ref{q-APp-AP}. Part (\textit{b}) is a consequence of (\textit{a}) plus Propositions \ref{HpAP} and \ref{pAPGamma}.
\end{proof}

\section{The $p\,$-AP implies that the convoluters are pseudo-measures}

Let $G$ be a locally compact group. It was proven in \cite{DawSp} that the approximation property AP implies that $CV_p(G)=PM_p(G)$ for all $1<p<\infty$. In this section we show how their arguments actually prove the following somewhat stronger result.

\begin{thm}\label{CV_p=PM_p}
	Let $1<p<\infty$. If $G$ has the $p\,$-AP, then $CV_p(G)=PM_p(G)$.
\end{thm}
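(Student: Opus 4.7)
The plan is to follow the strategy of Daws and Spronk \cite{DawSp}, translated into the $p\,$-completely bounded setting. The key ingredients are a natural module action of $M_{p\text{-}cb}(G)$ on $CV_p(G)$ extending $\varphi\cdot\lambda_p(s)=\varphi(s)\lambda_p(s)$, the inclusion $A_p(G)\cdot CV_p(G)\subseteq PM_p(G)$, and the fact that $PM_p(G)$ is weak*-closed in $\mathcal{B}(L_p(G))=(L_{p'}(G)\hat{\otimes}L_p(G))^*$. Once all this is set up, the $p\,$-AP provides the approximating net that bridges $CV_p(G)$ and $PM_p(G)$.

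First I would construct the action $M_{p\text{-}cb}(G)\times CV_p(G)\to CV_p(G)$, $(\varphi,T)\mapsto\varphi\cdot T$. Starting from a decomposition $\varphi(st^{-1})=\langle\beta(s),\alpha(t)\rangle$ as in (\ref{phiab}), one defines $\varphi\cdot T$ by the formula that on $\lambda_p(s)$ reproduces $\varphi(s)\lambda_p(s)$ and extends via the slice-type construction used by Daws for $PM_p(G)$; the crucial verification is that $\varphi\cdot T$ still commutes with the right regular representation, hence lies in $CV_p(G)$, and that $\|\varphi\cdot T\|\leq\|\varphi\|_{M_{p\text{-}cb}(G)}\|T\|$. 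The point where Daws's original argument was phrased for $PM_p(G)$ generalizes because $CV_p(G)$ shares the commutation property with $\rho_p$, which is all that the construction uses.

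Next I would prove that $A_p(G)\cdot CV_p(G)\subseteq PM_p(G)$. When $\varphi=\Lambda_p(g\otimes f)$ with $g\in L_{p'}(G)$ and $f\in L_p(G)$, the action $\varphi\cdot T$ can be rewritten as a convolution-type expression against $T$ which, by a direct approximation by compactly supported data, is shown to lie in the norm closure of $\mathrm{span}\{\lambda_p(s):s\in G\}$, i.e.\ in $PM_p(G)$. This is the localization step that appears as the central algebraic computation in \cite{DawSp}, and requires essentially no change in the $p$-setting.

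The main obstacle is the following weak*-to-weak* continuity statement: for each fixed $T\in CV_p(G)$ and each $\omega\in L_{p'}(G)\hat{\otimes}L_p(G)$, the linear functional $\varphi\mapsto\langle\varphi\cdot T,\omega\rangle$ on $M_{p\text{-}cb}(G)$ must lie in $Q_{p\text{-}cb}(G)$. Unwinding the definitions, this amounts to approximating $\varphi\mapsto\langle\varphi\cdot T,\omega\rangle$ in the $M_{p\text{-}cb}(G)^*$ norm by functionals of the form $\varphi\mapsto\int_G h\varphi$ with $h\in L_1(G)$, which is exactly the content of the Daws--Spronk slice-map argument adapted to general $p$; it uses the factorization $\omega=g\otimes f$ and approximations of $g\otimes f$ by convolutions against compactly supported data, together with the bound on $\|\varphi\cdot T\|$ in terms of $\|\varphi\|_{M_{p\text{-}cb}(G)}$. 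Granting this, the theorem follows at once: by the $p\,$-AP pick a net $(\varphi_i)\subseteq A_p(G)$ with $\varphi_i\to 1$ in $\sigma(M_{p\text{-}cb}(G),Q_{p\text{-}cb}(G))$; then $\varphi_i\cdot T\in PM_p(G)$ by the second step, and $\varphi_i\cdot T\to T$ in the weak*-topology of $\mathcal{B}(L_p(G))$ by the continuity above; since $PM_p(G)$ is weak*-closed, $T\in PM_p(G)$, proving $CV_p(G)=PM_p(G)$.
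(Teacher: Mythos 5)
Your overall architecture matches the paper's (and Daws--Spronk's): a module action of $M_{p\text{-}cb}(G)$ on $CV_p(G)$, the fact that compactly supported multipliers send convoluters into $PM_p(G)$, a weak*-continuity statement placing certain functionals in $Q_{p\text{-}cb}(G)$, and weak*-closedness of $PM_p(G)$. However, the step you isolate as the ``main obstacle'' is false as you state it. The functional $\varphi\mapsto\langle\varphi\cdot T,\omega\rangle$ does \emph{not} lie in $Q_{p\text{-}cb}(G)$ in general: take $T=\lambda_p(s_0)$ and $\omega=g\otimes f$ with $\langle g,\lambda_p(s_0)f\rangle\neq 0$; then the functional is a nonzero multiple of evaluation at $s_0$, and for non-discrete $G$ the point evaluation $\delta_{s_0}$ cannot be approximated in $M_{p\text{-}cb}(G)^*$-norm by elements of $L_1(G)$ (test against a sequence $a_n=g_n\ast\check f_n$ in the unit ball of $A_p(G)$ supported in shrinking neighbourhoods of $s_0$ with $a_n(s_0)\to 1$: then $\int a_n h\to 0$ for every fixed $h\in L_1(G)$, while $a_n(s_0)\to 1$). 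So no adaptation of a ``slice-map argument'' will prove the statement you propose; it simply does not hold.

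The missing idea is the mollifier: one fixes $\psi\in\check A_p(G)$ with $\psi\geq 0$ and $\int\psi=1$ and considers instead the functional $\varphi\mapsto\langle T,(\psi\ast\varphi)\cdot\tau\rangle$. For $\tau\in C_c(G)\otimes C_c(G)$ this is given by integration against the explicit $L_1$ function $g(t)=\chi_S(t)\Delta(t^{-1})\langle T,\psi_{t^{-1}}a\rangle$ with $a=\Lambda_p(\tau)$ and $S=\mathrm{supp}(\psi)^{-1}\mathrm{supp}(a)$, so it does lie in $Q_{p\text{-}cb}(G)$; it is precisely the convolution by $\psi$ that makes the Fubini/change-of-variables computation produce an honest $L_1$ density. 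One then runs your final argument with $(\psi\ast\varphi_i)\cdot T$ in place of $\varphi_i\cdot T$: each $\psi\ast\varphi_i$ is compactly supported (taking $\varphi_i\in A_{p,c}(G)$ as in Remark \ref{Apc}), so $(\psi\ast\varphi_i)\cdot T\in PM_p(G)$, and the weak* limit is $\langle T,(\psi\ast 1)\cdot\tau\rangle=\langle T,\tau\rangle$ since $\psi\ast 1=1$. With this correction your proof becomes the paper's proof; without it the central continuity claim fails.
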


\begin{rmk}
	By Proposition \ref{q-APp-AP}, the previous theorem implies that, if $G$ has the $q\,$-AP, then $CV_p(G)=PM_p(G)$ for all $1<p,q<\infty$ such that $\left|\frac{1}{q}-\frac{1}{2}\right|\leq\left|\frac{1}{p}-\frac{1}{2}\right|$.
\end{rmk}

Recall that $PM_p(G)$ is the dual of $A_p(G)$. Cowling showed  \cite{Cow} that $CV_p(G)$ can also be viewed as a dual space. Let $e$ be the identity element of $G$. For every compact neighbourhood $K$ of $e$ in $G$, let $L_p(K)$ be the subspace of $L_p(G)$ consisting of those functions supported on $K$, and let $\check{A}_{p,K}(G)$ be the space of functions of the form
\begin{equation*}
a=\sum_n g_n\ast \check{f}_n,\quad g_n\in L_{p'}(K),\ f_n\in L_p(K),
\end{equation*}
such that $\displaystyle\sum_n\|g_n\|_{p'}\|f_n\|_p<\infty$. Let $\check{A}_{p}(G)=\displaystyle\bigcup_K\check{A}_{p,K}(G)$ and
\begin{align*}
\|a\|_{\check{A}_{p}}=\inf\sum_n\|g_n\|_{p'}\|f_n\|_p,
\end{align*}
where the infimum ranges over all the decompositions $a=\sum g_n\ast \check{f}_n\in \check{A}_{p,K}(G)$ and all the compact neighbourhoods $K$ of $e$. Then $\check{A}_{p}(G)^*$ may be identified with $CV_p(G)$ by $T\in CV_p(G)\mapsto \Phi_T\in\check{A}_{p}(G)^*$, where
\begin{equation*}
\langle\Phi_T,a\rangle=\sum\langle g_n,T(f_n)\rangle,\quad a=\sum g_n\ast \check{f}_n.
\end{equation*}
For every $\tau\in L_{p'}(G)\hat{\otimes}L_p(G)$, $T\in CV_p(G)$ and $\psi\in\check{A}_{p}(G)$ such that $\psi\geq 0$ and $\int \psi=1$, we can define $\mu\in M_{p\text{-}cb}(G)^*$ by
\begin{equation*}
\langle\mu,\varphi\rangle=\langle T, (\psi \ast \varphi)\cdot\tau\rangle,\quad\forall\varphi\in M_{p\text{-}cb}(G),
\end{equation*}
using the duality $\mathcal{B}(L_p(G))=(L_{p'}(G)\hat{\otimes}L_p(G))^*$. Here we view $\tau$ as a function on $G\times G$ and
\begin{equation*}
\varphi\cdot\tau (s,t)=\varphi(st^{-1})\tau(s,t),\quad \forall\varphi\in M_{p\text{-}cb}(G),\ \forall\tau\in L_{p'}(G)\hat{\otimes}L_p(G).
\end{equation*}
Under these conditions, $\|\mu\|\leq\|\tau\|\|T\|$.
\begin{prop}
	Let $\mu$ be as above. Then $\mu$ is $\ast$-weak continuous, that is, $\mu\in Q_{p\text{-}cb}(G)$.
\end{prop}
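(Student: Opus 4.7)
Since $M_{p\text{-}cb}(G)=Q_{p\text{-}cb}(G)^*$ and $Q_{p\text{-}cb}(G)$ is by definition the norm-closure of $L_1(G)$ inside $M_{p\text{-}cb}(G)^*$, the goal reduces to exhibiting $\mu$ as a norm limit of $L_1$-functions. The estimate $\|\mu\|\leq\|\tau\|\|T\|$ shows that, with $\psi$ and $T$ held fixed, the assignment $\tau\mapsto\mu_\tau$ is a bounded linear map from $L_{p'}(G)\hat\otimes L_p(G)$ into $M_{p\text{-}cb}(G)^*$. Because $Q_{p\text{-}cb}(G)$ is closed, it suffices to prove $\mu_\tau\in L_1(G)\subseteq Q_{p\text{-}cb}(G)$ when $\tau$ lies in the norm-dense subspace $C_c(G)\otimes C_c(G)$, that is, for elementary tensors $\tau=g\otimes f$ with $g,f\in C_c(G)$.

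\textbf{Identifying the $L_1$-density.} Fix $\tau=g\otimes f$ as above. The plan is to produce $h\in L_1(G)$ such that $\langle\mu,\varphi\rangle=\int_G h(u)\varphi(u)\,du$ for all $\varphi\in M_{p\text{-}cb}(G)$. Writing $(\psi\ast\varphi)(r)=\int_G\psi(ru^{-1})\varphi(u)\Delta(u^{-1})\,du$ and formally pulling the $u$-integration out of the pairing $\langle T,\cdot\rangle$ yields the candidate
\begin{equation*}
h(u)=\Delta(u^{-1})\,\langle T,K_u\rangle,\qquad K_u(s,t)=\psi(st^{-1}u^{-1})g(s)f(t).
\end{equation*}
To give $K_u$ a concrete meaning as an element of $L_{p'}(G)\hat\otimes L_p(G)$, I would substitute a decomposition $\psi=\sum_n g_n\ast\check{f}_n$ with $g_n,f_n\in C_c(G)$, which is available because $\psi\in\check{A}_p(G)$; after a change of variables this expresses $K_u$ as an integral over a parameter $w\in G$ of simple tensors whose factors are products of $g$ with a right-translate of $g_n$ and of $f$ with a translate of $f_n$ depending jointly on $u$ and $w$. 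The compact supports of $\psi,g,f$ force $K_u=0$ once $u$ leaves the compact set $\operatorname{supp}(\psi)^{-1}\operatorname{supp}(g)\operatorname{supp}(f)^{-1}$, and strong continuity of the left regular representation makes $u\mapsto K_u$ continuous in the projective tensor norm. Hence $h$ is continuous and compactly supported, and in particular $h\in L_1(G)$.

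\textbf{Main obstacle.} The principal difficulty is rigorously justifying the interchange of the $u$-integration with the pairing $\langle T,\cdot\rangle$. Since $T\in CV_p(G)$ is merely a bounded operator on $L_p(G)$ and has no classical kernel, this interchange cannot be carried out pointwise; it must take place in the projective norm of $L_{p'}(G)\hat\otimes L_p(G)$. The $\check{A}_p$-decomposition of $\psi$ is essential precisely because it represents $K_u$ as an absolutely convergent series of simple tensors on which $T$ acts explicitly via $\langle\cdot,T\cdot\rangle$, making Fubini available through dominated convergence in the projective tensor norm. Once $\mu_{g\otimes f}\in L_1(G)$ has been established in this way, linearity extends the conclusion to all $\tau\in C_c(G)\otimes C_c(G)$, and the norm-continuity from the reduction step closes the proof.
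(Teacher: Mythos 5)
Your proposal is correct and follows essentially the same route as the paper: reduce by density and the bound $\|\mu\|\leq\|\tau\|\|T\|$ to $\tau\in C_c(G)\otimes C_c(G)$, then exhibit the $L_1$-density explicitly as $u\mapsto\Delta(u^{-1})\langle T,\psi_{u^{-1}}\cdot\tau\rangle$, which is continuous and supported in the compact set $\mathrm{supp}(\psi)^{-1}\mathrm{supp}(\Lambda_p(\tau))$. The paper delegates the justification of interchanging the $u$-integral with the pairing against $T$ to \cite[Lemma 3.2]{DawSp}, which is exactly the step you single out and sketch via Bochner integration of $u\mapsto K_u$ in the projective tensor norm.
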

\begin{proof}
	Since $C_c(G)\otimes C_c(G)$ is dense in $L_{p'}(G)\hat{\otimes}L_p(G)$, by continuity we may assume $\tau\in C_c(G)\otimes C_c(G)$. We will construct $g\in L_1(G)$ such that $\langle\mu,\varphi\rangle=\int \varphi g$ for all $\varphi\in M_{p\text{-}cb}(G)$. Recall the definition of $\Lambda_p$ in (\ref{Lamda_p}) and observe that $a=\Lambda_p(\tau)\in\check{A}_{p}(G)$. We have
	\begin{equation*}
	(\psi\ast \varphi)(s)a(s)=(\psi\ast \chi_S\varphi)(s)a(s),\quad\forall s\in G,
	\end{equation*}
	where $S=\text{supp}(\psi)^{-1}\text{supp}(a)$ is a compact subset of $G$. Define $\psi_t(s)=\psi(st)$ and
	\begin{equation*}
	g(t)=\chi_S(t)\Delta(t^{-1})\langle T,\psi_{t^{-1}}a\rangle,\quad t\in G.
	\end{equation*}
	Then $g\in L_1(G)$ and
	\begin{equation*}
	\int_G \varphi(t)g(t)\,dt=\langle\mu,\varphi\rangle,\quad\forall \varphi\in M_{p\text{-}cb}(G).
	\end{equation*}
	See \cite[Lemma 3.2]{DawSp} for details.
\end{proof}

\begin{proof}[Proof of Theorem \ref{CV_p=PM_p}]
It is always true that $PM_p(G)\subseteq CV_p(G)$. Now let $T\in CV_p(G)$ and $(\varphi_i)$ be a net in $A_{p,c}(G)$ such that $\varphi_i\to 1$ in $\sigma(M_{p\text{-}cb}(G),Q_{p\text{-}cb}(G))$. Fix $\psi\in\check{A}_{p}(G)$ as above and take $\tau\in L_{p'}(G)\hat{\otimes}L_p(G)$. By the previous proposition, there exists $\mu\in Q_{p\text{-}cb}(G)$ such that $\langle \varphi,\mu\rangle=\langle T, (\psi \ast \varphi)\cdot\tau\rangle$ for all $\varphi\in M_{p\text{-}cb}(G)$. Therefore
	\begin{align*}
	\lim_i\langle(\psi\ast \varphi_i)\cdot T,\tau\rangle &= \lim_i\langle T,(\psi\ast \varphi_i)\cdot\tau\rangle \\
	&= \lim_i\langle \varphi_i,\mu\rangle \\
	&= \langle 1,\mu\rangle\\
	&= \langle T,(\psi\ast 1)\cdot\tau\rangle \\
	&= \langle T,1\cdot\tau\rangle \\
	&= \langle T,\tau\rangle.
	\end{align*}
	Here we have used the $M_{p\text{-}cb}(G)$-module structure of $CV_p(G)$ (see \cite[Corollary 2.7]{DawSp}). This is valid for all $\tau\in L_{p'}(G)\hat{\otimes}L_p(G)$, so $(\psi\ast \varphi_i)\cdot T\to T$ in the weak* topology of $\mathcal{B}(L_p(G))$. Since $\psi$ and $\varphi_i$ have compact support, so does $\psi\ast \varphi_i$. Thus, by \cite[Corollary 2.8]{DawSp}, $(\psi\ast \varphi_i)\cdot T\in PM_p(G)$ for all $i$, and since $PM_p(G)$ is weak* closed, this implies that $T\in PM_p(G)$.
\end{proof}

\section*{Acknowledgements}
I would like to thank Mikael de la Salle for many enlightening discussions and for his careful reading of some previous versions of this paper. I also thank the two anonymous referees for their valuable comments and suggestions. This work was partially supported by CONICYT - Becas Chile.

\bibliographystyle{plain} 
\bibliography{Bibliography}

\begin{thebibliography}{10}

\bibitem{An}
Guimei An, Jung-Jin Lee, and Zhong-Jin Ruan.
\newblock On {$p$}-approximation properties for {$p$}-operator spaces.
\newblock {\em J. Funct. Anal.}, 259(4):933--974, 2010.

\bibitem{Cow}
Michael Cowling.
\newblock The predual of the space of convolutors on a locally compact group.
\newblock {\em Bull. Austral. Math. Soc.}, 57(3):409--414, 1998.

\bibitem{CowHaa}
Michael Cowling and Uffe Haagerup.
\newblock Completely bounded multipliers of the {F}ourier algebra of a simple
  {L}ie group of real rank one.
\newblock {\em Invent. Math.}, 96(3):507--549, 1989.

\bibitem{DawSp}
M.~{Daws} and N.~{Spronk}.
\newblock {On convoluters on $L^p$-spaces}.
\newblock {\em ArXiv e-prints}, August 2013.

\bibitem{Daw}
Matthew Daws.
\newblock {$p$}-operator spaces and {F}ig\`a-{T}alamanca--{H}erz algebras.
\newblock {\em J. Operator Theory}, 63(1):47--83, 2010.

\bibitem{Def}
Andreas Defant and Klaus Floret.
\newblock {\em Tensor norms and operator ideals}, volume 176 of {\em
  North-Holland Mathematics Studies}.
\newblock North-Holland Publishing Co., Amsterdam, 1993.

\bibitem{Der}
Antoine Derighetti.
\newblock {\em Convolution operators on groups}, volume~11 of {\em Lecture
  Notes of the Unione Matematica Italiana}.
\newblock Springer, Heidelberg; UMI, Bologna, 2011.

\bibitem{Dor}
Brian Dorofaeff.
\newblock Weak amenability and semidirect products in simple {L}ie groups.
\newblock {\em Math. Ann.}, 306(4):737--742, 1996.

\bibitem{Foll}
Gerald~B. Folland.
\newblock {\em A course in abstract harmonic analysis}.
\newblock Studies in Advanced Mathematics. CRC Press, Boca Raton, FL, 1995.

\bibitem{Haa}
Uffe Haagerup.
\newblock Group {$C^*$}-algebras without the completely bounded approximation
  property.
\newblock {\em J. Lie Theory}, 26(3):861--887, 2016.

\bibitem{HaadL}
Uffe Haagerup and Tim de~Laat.
\newblock Simple {L}ie groups without the approximation property.
\newblock {\em Duke Math. J.}, 162(5):925--964, 2013.

\bibitem{HaadL2}
Uffe Haagerup and Tim de~Laat.
\newblock Simple {L}ie groups without the approximation property {II}.
\newblock {\em Trans. Amer. Math. Soc.}, 368(6):3777--3809, 2016.

\bibitem{HaaKra}
Uffe Haagerup and Jon Kraus.
\newblock Approximation properties for group {$C^*$}-algebras and group von
  {N}eumann algebras.
\newblock {\em Trans. Amer. Math. Soc.}, 344(2):667--699, 1994.

\bibitem{Hel}
Sigurdur Helgason.
\newblock {\em Differential geometry, {L}ie groups, and symmetric spaces},
  volume~80 of {\em Pure and Applied Mathematics}.
\newblock Academic Press, Inc. [Harcourt Brace Jovanovich, Publishers], New
  York-London, 1978.

\bibitem{Her}
Carl Herz.
\newblock The theory of {$p$}-spaces with an application to convolution
  operators.
\newblock {\em Trans. Amer. Math. Soc.}, 154:69--82, 1971.

\bibitem{Her2}
Carl Herz.
\newblock Harmonic synthesis for subgroups.
\newblock {\em Ann. Inst. Fourier (Grenoble)}, 23(3):91--123, 1973.

\bibitem{Laa}
Tim~de Laat and Mikael de~la Salle.
\newblock Strong property ({T}) for higher-rank simple {L}ie groups.
\newblock {\em Proc. Lond. Math. Soc. (3)}, 111(4):936--966, 2015.

\bibitem{Laf}
Vincent Lafforgue.
\newblock Un renforcement de la propri\'et\'e ({T}).
\newblock {\em Duke Math. J.}, 143(3):559--602, 2008.

\bibitem{LafdlS}
Vincent Lafforgue and Mikael De~la Salle.
\newblock Noncommutative {$L^p$}-spaces without the completely bounded
  approximation property.
\newblock {\em Duke Math. J.}, 160(1):71--116, 2011.

\bibitem{Mar}
G.~A. Margulis.
\newblock {\em Discrete subgroups of semisimple {L}ie groups}, volume~17 of
  {\em Ergebnisse der Mathematik und ihrer Grenzgebiete (3) [Results in
  Mathematics and Related Areas (3)]}.
\newblock Springer-Verlag, Berlin, 1991.

\bibitem{Pis2}
Gilles Pisier.
\newblock Completely bounded maps between sets of {B}anach space operators.
\newblock {\em Indiana Univ. Math. J.}, 39(1):249--277, 1990.

\bibitem{Rei}
Hans Reiter and Jan~D. Stegeman.
\newblock {\em Classical harmonic analysis and locally compact groups},
  volume~22 of {\em London Mathematical Society Monographs. New Series}.
\newblock The Clarendon Press, Oxford University Press, New York, second
  edition, 2000.

\end{thebibliography}

\end{document}